\newcommand{\E}{\mathbb{E}}
\newcommand{\N}{\mathbb{N}}
\newcommand{\Z}{\mathbb{Z}}
\newcommand{\R}{\mathbb{R}}
\newcommand{\Pb}{\mathbb{P}}
\newcommand{\ve}{\varepsilon}
\def\={{\;\mathop{=}\limits^{\text{(law)}}\;}}
\newtheorem{theorem}{Theorem}[section]
\newtheorem{prop}[theorem]{Proposition}
\newtheorem{lemma}[theorem]{Lemma}
\newtheorem{defi}[theorem]{Definition}
\newtheorem{corol}[theorem]{Corollary}
\theoremstyle{definition}
\newtheorem{rem}[theorem]{Remark}
\newtheorem{counterexa}[theorem]{Counterexample}
\newtheorem{exa}[theorem]{Example}
\numberwithin{equation}{section}
\date{}
\title{An application of multivariate total positivity to peacocks}
\author{\small ANTOINE MARIE BOGSO \footnote{Universit\'e de Lorraine, Institut Elie Cartan de Lorraine, UMR 7502, Vandoeuvre-l\`es-Nancy, F-54506, France.
CNRS, Institut Elie Cartan de Lorraine, UMR 7502, Vandoeuvre-l\`es-Nancy, F-54506, France. Email addresses: antoine.bogso@univ-lorraine.fr, ambogso@gmail.com}
}
\begin{document}
\maketitle
 
{\footnotesize  {\bf Abstract:} $\;$
We use multivariate total positivity theory to exhibit new families of peacocks. As the authors of \cite{HPRY}, our guiding example is the result of Carr-Ewald-Xiao \cite{CEX}. We shall introduce the notion of strong conditional monotonicity. This concept is strictly more restrictive than the conditional monotonicity as defined in \cite{HPRY} (see also \cite{Be}, \cite{BPR1} and \cite{ShS1}). There are many random vectors which are strongly conditionally monotone (SCM). Indeed, we shall prove that multivariate totally positive of order 2 (MTP$_2$) random vectors are SCM. As a consequence, stochastic processes with MTP$_2$ finite-dimensional marginals are SCM. This family includes processes with independent and log-concave increments, and  one-dimensional diffusions which have absolutely continuous transition kernels.}\\
{\footnotesize  {\bf Key words:} $\;$ convex order, peacocks, total positivity of order 2 (TP$_2$), multivariate total positivity of order 2 (MTP$_2$), Markov property, strong conditional monotonicity.}

\chead[\small ANTOINE MARIE BOGSO]{An application of multivariate total positivity to peacocks}

\section{Introduction}
We call peacock a real valued process $(Z_t,t\geq0)$ which is integrable, i.e. $\E[|Z_t|]<\infty$ for every $t\geq0$, and which is increasing in the convex order, i.e., for every convex fonction $\psi:\R\to\R$,
\begin{equation}\label{eq:DefPcoc}
\text{the map: }t\in\R_+\longmapsto\E[\psi(Z_t)]\in]-\infty,+\infty]\text{ is non-decreasing.}
\end{equation}
Observe that if $(Z_t,t\geq0)$ is a peacock, then $\E[Z_t]$ does not depend on $t$. Indeed, it suffices to apply (\ref{eq:DefPcoc}) first with $\psi(x)=x$, then with $\psi(x)=-x$. 
The pun {\em peacock} comes from the french: ``\underline{P}rocessus \underline{C}roissant pour l'\underline{O}rdre \underline{C}onvexe" which acronym: ``P.C.O.C" may be pronounced ``peacock".
To prove that an integrable process satisfies (\ref{eq:DefPcoc}), it suffices to consider convex functions which belong to the set:
$$
\mathbf{C}:=\{\psi:\R\to\R\text{ convex }\mathcal{C}^2\text{-function such that }\psi^{\prime\prime}\text{ has a compact support}\},
$$
where $\psi^{\prime\prime}$ denotes the second order derivative of $\psi$.
Note that if $\psi\in\mathbf{C}$, then its derivative $\psi^{\prime}$ is bounded and there exist $k_1,k_2\geq0$ such that:
$$
\forall\,x\in\R,\,\text{ }|\psi(x)|\leq k_1+k_2|x|.
$$
There are two remarkable results which motivate the investigation on peacocks. In 2008, Carr-Ewald-Xiao \cite{CEX} proved that Asian options are increasing in the convex order with respect to the maturity. In other words, if $(B_s,s\geq0)$ denotes a standard Brownian motion issued from $0$, then
\begin{equation}\label{eq:CEX}\tag{CEX08}
\left({\bf N}_t:=\frac{1}{t}\int_0^t e^{B_s-\frac{s}{2}}ds,t\geq0\right)\text{ is a peacock.}
\end{equation}
The second result due to Kellerer \cite{Kel} states that a real valued process $(Z_t,t\geq0)$ is a peacock if and only if there exists a martingale $(M_t,t\geq0)$ with the same one-dimensional marginals as $(Z_t,t\geq0)$, i.e., for every $t\geq0$, $M_t\=Z_t$. This martingale is not unique in general and it may be chosen Markovian. Recently, Hisch-Roynette \cite{HR} offered a new proof of Kellerer's theorem. However, the Kellerer's proof is not constructive, and then it helps neither establishing whether or not a process is a peacock, nor constructing an {\em associated} martingale to a given peacock $(Z_t,t\geq0)$, i.e. a martingale having the same one-dimensional marginals as $(Z_t,t\geq0)$. In \cite{BYa}, Baker-Yor provide an associated martingale to $({\bf N}_t,t\geq0)$ using the Brownian sheet. Inspired by Carr-Ewald-Xiao and Baker-Yor results, the authors of \cite{HPRY} exhibited several examples of peacocks and they provided  several methods to associate explicitely  martingales to certain of them. We refer the reader to 
\cite{RS}, \cite{HK}, \cite{BDMY}, \cite{BYb}  and  \cite{BPR2} for further interesting results about peacock processes.\\
In this paper, we exibit new families of peacocks using multivariate total positivity theory. But for many of them, finding an associated  martingale remains open. Let us mention that total positivity is a nice property that plays an important role in various domains of Mechanics and Mathematics. There is a large amount of literature concerning total positivity. We shall follow Karlin \cite{KAa} and Karlin-Rinot \cite{KR} for basic definitions and results.\\ In section 2, we  give some basic results concerning total positivity and multivariate total positivity of order 2. Section 3 is reserved to strong  conditional monotonicity results. Finally, in section 4, we use strong conditional monotonicity theorems to exhibit new classes of peacocks, inspired from the Carr-Ewald-Xiao example.

\section{Total positivity and multivariate total positivity \\of order 2}
We first define totally positive functions of order 2 and give several examples of Markov processes with totally positive transition kernels. Then, we deal with an extension of total positivity of order 2 to multivariate distributions.
\subsection{Totally positive functions of order 2}
We follow the terminology and notation of Karlin \cite{KA}.
\begin{defi}
A function $p:\R\times\R\to\R_+$ is said to be {\em totally positive of order 2} (TP$_2$) if for every real numbers $x_1<x_2$, $y_1<y_2$,
\begin{equation}\label{eq:TP2cont}\tag{TP$_2$}
p\left(
\begin{array}{cc}
x_1,x_2\\
y_1,y_2
\end{array}
\right):=\det\left(
\begin{array}{cc}
p(x_1,y_1)&p(x_1,y_2)\\ &\\
p(x_2,y_1)&p(x_2,y_2)
\end{array}
\right)\geq0.
\end{equation}
Similarly, a function $p:\Z\times\Z\to\R_+$ is said to be TP$_2$ if, for every integers $k_1<k_2$ and $l_1<l_2$, $p$ satisfies (\ref{eq:TP2cont}). 
\end{defi}
\noindent
Note that one may define totally positive functions of order higher than 2 (see \cite{KA}).
\begin{rem}\label{rem:TP2SetD}
Let $D$ denote a subset of $\R\times\R$ which satisfies the following property:
\begin{equation}\label{eq:SetD}\tag{P}
\left.
\begin{array}{l}
\text{For every }x_1<x_2\text{ and }y_1<y_2,\\ \\
\left[(x_1,y_2)\in D\text{ and }(x_2,y_1)\in D\right] \Longrightarrow
\left[(x_1,y_1)\in D\text{ and }(x_2,y_2)\in D\right].
\end{array}
\right\}
\end{equation}
Let $p:D\to\R_+$ be TP$_2$, i.e. for every $x_1<x_2$, $y_1<y_2$ such that $(x_1,y_1)$, $(x_1,y_2)$, $(x_2,y_1)$ and $(x_2,y_2)$ belong to $D$,
$$
p\left(
\begin{array}{cc}
x_1,x_2\\
y_1,y_2
\end{array}
\right)\geq0.
$$
We define:
$$
\forall\,x,y\in\R,\,\text{ }\widehat{p}(x,y)=
\left\{
\begin{array}{cl}
p(x,y)&\text{if }(x,y)\in D,\\
0&\text{otherwise.}
\end{array}
\right.
$$
Then, $\widehat{p}$ is TP$_2$ if and only if $p$ is TP$_2$.\\
Here are some examples of $D\subset \R\times\R$ satisfying (\ref{eq:SetD}).
\begin{enumerate}
\item[i)]If $I$ and $J$ are two intervals of $\R$, then $I\times J$ satisfies (\ref{eq:SetD}).
\item[ii)]For every reals $k_0<k_1$ and every $(\alpha,\beta)\in\R\times\R\setminus\{(0,0)\}$,
$$
D=\{(x,y)\in\R\times\R;\,k_0\leq\alpha x-\beta y\leq k_1\}\text{ satifies }(\ref{eq:SetD}).
$$
\end{enumerate}
\end{rem}
We  give properties of  TP$_2$ functions assuming  that they are defined on $\R\times\R$. By the preceding remark, one may extend these results to functions defined on subsets of $\R\times\R$ which satisfy (\ref{eq:SetD}).\\
The following characterization result of smooth TP$_2$ functions is proved in \cite{KAa}.
\begin{prop}\label{prop:SmoothTP2}(Karlin \cite{KAa}).
Let $p:\R\times\R\to\R_+$ be such that the partial derivatives $\dfrac{\partial p}{\partial x}$,
$\dfrac{\partial p}{\partial y}$, and $\dfrac{\partial^2 p}{\partial x\partial y}$ exist at each point $(x,y)$ of $\R\times\R$.
\item[1)]If $p$ is TP$_2$, then, for every reals $x_1<x_2$ and $y$,
\begin{equation}\label{eq:SmoothTP2a}
\det\left(
\begin{array}{cc}
p(x_1,y)&\dfrac{\partial p}{\partial y}(x_1,y)\\ &\\
p(x_2,y)&\dfrac{\partial p}{\partial y}(x_2,y)
\end{array}
\right)\geq0,
\end{equation}
and, for every $(x,y)\in\R\times\R$,
\begin{equation}\label{eq:SmoothTP2b}
\det\left(
\begin{array}{cc}
p(x,y)&\dfrac{\partial p}{\partial y}(x,y)\\ &\\
\dfrac{\partial p}{\partial x}(x,y)&\dfrac{\partial^2 p}{\partial x\partial y}(x,y)
\end{array}
\right)\geq0.
\end{equation}
\item[2)]Conversely, if $p(x,y)>0$ for every $(x,y)\in\R\times\R$, then (\ref{eq:SmoothTP2b}) implies (\ref{eq:SmoothTP2a}), which in turn implies that $p$ is TP$_2$.
\end{prop}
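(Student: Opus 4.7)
The two directions are essentially differentiation (part 1) and integration (part 2), linked by the elementary observation that for positive $p$ the $\mathrm{TP}_2$ property is the same as log-supermodularity. I would organize the proof exactly along these lines.

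\textbf{Part 1 (necessary direction).} Assume $p$ is $\mathrm{TP}_2$. Fix $x_1<x_2$ and $y\in\R$. For any $h>0$ the inequality $p(x_1,y)p(x_2,y+h)-p(x_1,y+h)p(x_2,y)\ge 0$ holds by $\mathrm{TP}_2$. Divide by $h$, rearrange as
\[
p(x_1,y)\,\frac{p(x_2,y+h)-p(x_2,y)}{h}\;-\;p(x_2,y)\,\frac{p(x_1,y+h)-p(x_1,y)}{h}\;\ge 0,
\]
and let $h\downarrow 0$; since the partial derivatives in $y$ exist, this yields (\ref{eq:SmoothTP2a}). To obtain (\ref{eq:SmoothTP2b}), apply the same trick to (\ref{eq:SmoothTP2a}) in the $x$-variable: fix $x\in\R$, take $x_1=x$, $x_2=x+k$ with $k>0$, divide the determinant in (\ref{eq:SmoothTP2a}) by $k$, and let $k\downarrow 0$, using the existence of $\partial p/\partial x$ and $\partial^2 p/\partial x\partial y$ at $(x,y)$.

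\textbf{Part 2 (sufficient direction, $p>0$).} Under strict positivity, (\ref{eq:SmoothTP2b}) is equivalent to
\[
\frac{\partial^2\log p}{\partial x\,\partial y}(x,y)\;=\;\frac{p\,\partial^2 p/\partial x\partial y-(\partial p/\partial x)(\partial p/\partial y)}{p^2}(x,y)\;\ge 0.
\]
To derive (\ref{eq:SmoothTP2a}) from this, fix $y$ and $x_1<x_2$ and integrate $\partial^2\log p/\partial x\partial y$ over $x\in[x_1,x_2]$ to get
\[
\frac{\partial\log p}{\partial y}(x_2,y)-\frac{\partial\log p}{\partial y}(x_1,y)\;\ge 0,
\]
which, after multiplying through by $p(x_1,y)p(x_2,y)>0$, is exactly (\ref{eq:SmoothTP2a}). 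To derive $\mathrm{TP}_2$ from (\ref{eq:SmoothTP2a}), fix $x_1<x_2$ and consider the ratio $f(y):=p(x_2,y)/p(x_1,y)$, well defined and smooth since $p>0$. A direct computation gives
\[
f'(y)\;=\;\frac{1}{p(x_1,y)^2}\det\!\begin{pmatrix} p(x_1,y)&\partial p/\partial y(x_1,y)\\ p(x_2,y)&\partial p/\partial y(x_2,y)\end{pmatrix}\;\ge 0
\]
by (\ref{eq:SmoothTP2a}), so $f$ is non-decreasing; applied at $y_1<y_2$ this is precisely $p(x_1,y_1)p(x_2,y_2)\ge p(x_1,y_2)p(x_2,y_1)$.

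\textbf{Main obstacle.} The proof is essentially bookkeeping once one notices the log-supermodularity viewpoint, so there is no deep difficulty; the only point deserving care is that in part 2 the hypothesis $p>0$ is used twice in a non-trivial way (to divide by $p^2$ when passing to $\log p$, and to form and monotonically interpret the ratio $f$), and the integration argument needs the mixed partial to be at least integrable in $x$—which is automatic from the differentiability hypothesis by the usual fundamental-theorem-of-calculus argument applied to $x\mapsto \partial\log p/\partial y(x,y)$. No assumption about continuity of the partials beyond what is stated is needed if one is careful to work with the difference quotients in part 1 and with the antiderivative of $\partial^2\log p/\partial x\partial y$ in part 2.
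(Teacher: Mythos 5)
The paper itself gives no proof of this proposition; it cites Karlin's book, so there is nothing to compare against. Evaluated on its own merits, your argument is the classical one and its two halves are organized correctly: Part 1 via difference quotients, Part 2 via the log-supermodular reformulation and monotonicity of the ratio $p(x_2,\cdot)/p(x_1,\cdot)$.

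One point deserves correction. In Part 2 and again in your concluding remark you phrase the passage from $\partial^2\log p/\partial x\partial y\ge 0$ to the monotonicity of $x\mapsto \partial\log p/\partial y(x,y)$ as an \emph{integration}, and you assert that integrability of the mixed partial is ``automatic from the differentiability hypothesis by the usual fundamental-theorem-of-calculus argument.'' That justification is circular and, taken literally, false: a function that is differentiable at every point need not have a Lebesgue-integrable derivative, so the fundamental theorem of calculus is not available from the stated hypotheses. Fortunately you do not need it. Set $g(x):=\partial\log p/\partial y(x,y)$; the hypotheses guarantee $g$ is differentiable with $g'(x)=\partial^2\log p/\partial x\partial y(x,y)\ge 0$ at every $x$, and the mean value theorem then gives directly that $g$ is non-decreasing, with no integrability assumption. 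The same replacement should be made in your final step, where you invoke $f'(y)\ge 0$ to conclude $f(y)=p(x_2,y)/p(x_1,y)$ is non-decreasing: again the mean value theorem, not the FTC, is the right tool. With that adjustment the proof is complete and correct.
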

A second criterion of smooth TP$_2$ functions follows from Proposition \ref{prop:SmoothTP2}.
\begin{corol}\label{corol:SmoothTP2}(Karlin \cite{KAa}).
Let $p:\R\times\R\to\R_+$ be strictly positive and such that at each point $(x,y)$ of $\R\times\R$, the second order partial derivative $\dfrac{\partial^2(\log p)}{\partial x\partial y}(x,y)$ exists. Then $p$ is TP$_2$ if and only if
$$
\frac{\partial^2(\log p)}{\partial x\partial y}(x,y)\geq0.
$$ 
\end{corol}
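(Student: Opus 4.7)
The plan is to reduce the corollary directly to Proposition~\ref{prop:SmoothTP2} by expressing the determinant~(\ref{eq:SmoothTP2b}) in terms of $\log p$. Since $p>0$ everywhere, the function $q:=\log p$ is well-defined and the partial derivatives $\partial q/\partial x$, $\partial q/\partial y$ exist (they equal $(\partial p/\partial x)/p$ and $(\partial p/\partial y)/p$ respectively). Assuming $\partial^2(\log p)/\partial x\partial y$ exists amounts, in particular, to the existence of $\partial^2 p/\partial x\partial y$, so the hypotheses of Proposition~\ref{prop:SmoothTP2} are met.

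The key computation is the identity
\[
\frac{\partial^2 p}{\partial x\partial y}
= p\cdot\frac{\partial^2(\log p)}{\partial x\partial y}
+\frac{1}{p}\cdot\frac{\partial p}{\partial x}\cdot\frac{\partial p}{\partial y},
\]
obtained by differentiating $\partial p/\partial y=p\,\partial q/\partial y$ with respect to $x$. Multiplying by $p$ and rearranging yields
\[
p\cdot\frac{\partial^2 p}{\partial x\partial y}-\frac{\partial p}{\partial x}\cdot\frac{\partial p}{\partial y}
= p^2\cdot\frac{\partial^2(\log p)}{\partial x\partial y},
\]
i.e.\ the $2\times2$ determinant in~(\ref{eq:SmoothTP2b}) equals $p^2\,\partial^2(\log p)/\partial x\partial y$. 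Because $p>0$, this determinant has the same sign as $\partial^2(\log p)/\partial x\partial y$.

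From here the two implications are immediate. If $\partial^2(\log p)/\partial x\partial y\geq0$ on $\R\times\R$, then~(\ref{eq:SmoothTP2b}) holds, and part~2 of Proposition~\ref{prop:SmoothTP2} (applicable since $p>0$ everywhere) gives that $p$ is TP$_2$. Conversely, if $p$ is TP$_2$, then by part~1 of Proposition~\ref{prop:SmoothTP2} the inequality~(\ref{eq:SmoothTP2b}) holds at every $(x,y)$, and dividing by $p^2>0$ yields $\partial^2(\log p)/\partial x\partial y\geq0$.

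There is no real obstacle here beyond the algebraic identity for the determinant; the only mild subtlety is checking that existence of $\partial^2(\log p)/\partial x\partial y$ together with existence and positivity of $p$ indeed guarantees existence of $\partial p/\partial x$, $\partial p/\partial y$ and $\partial^2 p/\partial x\partial y$, so that Proposition~\ref{prop:SmoothTP2} can be invoked.
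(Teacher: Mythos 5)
Your proof is correct and follows the same route as the paper: establish the identity $p^2\,\partial^2(\log p)/\partial x\partial y = p\,\partial^2 p/\partial x\partial y - (\partial p/\partial x)(\partial p/\partial y)$, i.e.\ that the determinant in~(\ref{eq:SmoothTP2b}) equals $p^2\,\partial^2(\log p)/\partial x\partial y$, and then invoke both directions of Proposition~\ref{prop:SmoothTP2} using $p>0$. Your added remark about verifying that the hypotheses guarantee the existence of the required partial derivatives of $p$ is a helpful bit of care that the paper leaves implicit.
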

\begin{proof}
This follows from (\ref{eq:SmoothTP2b}) and from the straightforward relation:
$$
p^2(x,y)\frac{\partial^2(\log p)}{\partial x\partial y}(x,y)=\det\left(
\begin{array}{cc}
p(x,y)&\dfrac{\partial p}{\partial y}(x,y)\\ &\\
\dfrac{\partial p}{\partial x}(x,y)&\dfrac{\partial^2p}{\partial x\partial y}(x,y)
\end{array}
\right).
$$
\end{proof}
\begin{exa}(Brownian transition densities).\\
Consider the family $(p_t:\R\times\R\to\R_+,t>0)$ given by:
$$
\forall\,(x,y)\in\R\times\R,\,\text{ }p_t(x,y)=\frac{1}{\sqrt{2\pi t}}\exp\left(\frac{-(x-y)^2}{2t}\right).
$$
For every $t>0$,
$$
\frac{\partial^2(\log p_t)}{\partial x\partial y}(x,y)=\frac{1}{t}>0,
$$
and, by Corollary \ref{corol:SmoothTP2}, $p_t$ is TP$_2$.\\
More generally, if $f:\R\to\R_+$ is a strictly positive $\mathcal{C}^2$-function, then $(x,y)\longmapsto f(x-y)$ is TP$_2$ if and only if $f$ is log-concave. Indeed,
$$
\frac{\partial^2}{\partial x\partial y}[\log f(x-y)]=-(\log f)^{\prime\prime}(x-y).
$$
\end{exa}
\begin{exa}(Ornstein-Uhlenbeck transiton densities).\\
Let $(p_t,t>0)$ be the densities defined on $\R\times\R$ by:
$$
p_t(x,y)=\sqrt{\frac{ce^{ct}}{2\pi\sinh(ct)}}\exp\left(-ce^{ct}\frac{(y-xe^{-ct}-\nu(1-e^{-ct}))^2}{2\sinh(ct)}\right)\,\text{ }(c,\,\nu\in\R).
$$
For every $t>0$,
$$
\frac{\partial^2(\log p_t)}{\partial x\partial y}(x,y)=\frac{c}{\sinh(ct)}>0,
$$
and, from Corollary \ref{corol:SmoothTP2}, $p_t$ is TP$_2$.
\end{exa}
\begin{counterexa}
The function $p$ defined by:
$$
\forall\,x,y\in\R,\,\text{ }p(x,y)=\frac{1}{1+(x-y)^2}
$$
is not TP$_2$.
\end{counterexa}
New TP$_2$ functions are generated using the following classical composition formula:
\begin{lemma}\label{lem:ConvProduct}
Let $p,q:\R\times\R\to\R$ be two Borel functions, and let $\sigma$ be a positive measure on $\R$ such that:
$$
\forall\,x,z\in\R,\text{ }\int_{\R}|p(x,y)||q(y,z)|\,\sigma(dy)<\infty.
$$
Let $r$ denote the function defined on $\R\times\R$ by:
$$
\forall\,x,z\in\R,\text{ }r(x,z)=\int_{\R}p(x,y)q(y,z)\sigma(dy).
$$
Then, for every reals $x_1<x_2$, $z_1<z_2$,
\begin{equation}\label{eq:ConvProduct}
r\left(
\begin{array}{c}
x_1,x_2\\
z_1,z_2
\end{array}
\right)=
\iint_{y_1<y_2}p\left(
\begin{array}{c}
x_1,x_2\\
y_1,y_2
\end{array}
\right)
q\left(
\begin{array}{c}
y_1,y_2\\
z_1,z_2
\end{array}
\right)
\sigma(dy_1)\sigma(dy_2).
\end{equation}
\end{lemma}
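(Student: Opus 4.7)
The plan is to reduce the identity to pure algebra plus one application of Fubini. I would start by writing, directly from the definition of $r$,
\begin{equation*}
r(x_i,z_j)\,r(x_k,z_l)=\iint_{\R\times\R}p(x_i,y_1)q(y_1,z_j)p(x_k,y_2)q(y_2,z_l)\,\sigma(dy_1)\sigma(dy_2),
\end{equation*}
which is legitimate since the hypothesis $\int|p(x,y)||q(y,z)|\,\sigma(dy)<\infty$ gives, via Tonelli, absolute integrability of the product against $\sigma\otimes\sigma$. Applying this to each of the four terms arising in the $2\times 2$ determinant defining $r\left(\begin{smallmatrix}x_1,x_2\\ z_1,z_2\end{smallmatrix}\right)=r(x_1,z_1)r(x_2,z_2)-r(x_1,z_2)r(x_2,z_1)$ expresses the left-hand side as a single integral over $\R\times\R$ of an explicit polynomial in the values $p(x_i,y_j)$, $q(y_j,z_k)$.

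Next I would symmetrize with respect to the swap $y_1\leftrightarrow y_2$. Since the measure $\sigma(dy_1)\sigma(dy_2)$ is invariant under this swap, each of the four terms may be rewritten by averaging it with its image under the swap. A direct computation then shows that, after this averaging, the integrand becomes exactly
\begin{equation*}
\tfrac{1}{2}\,p\!\left(\begin{array}{c}x_1,x_2\\ y_1,y_2\end{array}\right)q\!\left(\begin{array}{c}y_1,y_2\\ z_1,z_2\end{array}\right).
\end{equation*}
This is the key algebraic identity: the difference of the two products of four terms, after $y_1\leftrightarrow y_2$ symmetrization, factors as the product of the two $2\times 2$ determinants.

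Finally, the integrand above is itself symmetric under $y_1\leftrightarrow y_2$ (each of the two determinants changes sign when its $y$-arguments are swapped, so their product is unchanged), and on the diagonal $\{y_1=y_2\}$ the first determinant has two identical columns and hence vanishes. Therefore the integral over $\R\times\R$ equals twice the integral over $\{y_1<y_2\}$, and the factor $\tfrac{1}{2}$ cancels, yielding exactly (\ref{eq:ConvProduct}).

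I do not expect a serious obstacle here; the proof is elementary once one sees the symmetrization trick. The only point to be careful about is the Fubini application for the signed integrand, which is handled by the $L^1$ bound in the hypothesis; and the bookkeeping in the algebraic expansion, which is routine but should be done explicitly in the write-up to make the cancellations visible.
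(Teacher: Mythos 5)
Your proof is correct. The paper itself gives no proof of this lemma, simply labelling it a ``classical composition formula'' (it is the basic composition formula of Karlin's total positivity theory), so there is no in-paper argument to compare against; your argument is precisely the standard one: expand $r(x_1,z_1)r(x_2,z_2)-r(x_1,z_2)r(x_2,z_1)$ as a double integral over $\R^2$ by Fubini, symmetrize the integrand under $y_1\leftrightarrow y_2$ (using invariance of $\sigma\otimes\sigma$) to factor it as $\tfrac12\,p\bigl(\begin{smallmatrix}x_1,x_2\\ y_1,y_2\end{smallmatrix}\bigr)\,q\bigl(\begin{smallmatrix}y_1,y_2\\ z_1,z_2\end{smallmatrix}\bigr)$, then observe this factored integrand is symmetric and vanishes on the diagonal, whence $\iint_{\R^2}=2\iint_{y_1<y_2}$. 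The only minor point worth flagging in a careful write-up is that the Fubini/Tonelli step implicitly requires $\sigma$ to be $\sigma$-finite (as is standard in the total positivity literature and tacitly assumed by the paper); with that understood, your bookkeeping and cancellations are exactly right.
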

\begin{rem}\label{rem:ConvProduct}
If $p$ and $q$ are two integrable functions with respect to the Lebesgue measure and if $r:=p\ast q$ denotes the convolution product of $p$ and $q$, i.e.
$$
\forall\,x\in\R,\text{ }r(x)=\int_{\R}p(x-y)q(y)dy,
$$
then, using (\ref{eq:ConvProduct}), for every $x_1<x_2$ and $z_1<z_2$,
\begin{align*}
&\det\left(
\begin{array}{cc}
r(x_1-z_1)&r(x_1-z_2)\\ &\\
r(x_2-z_1)&r(x_2-z_2)
\end{array}
\right)\\
&=\iint_{y_1<y_2}\det\left(
\begin{array}{cc}
p(x_1-y_1)&p(x_1-y_2)\\ &\\
p(x_2-y_1)&p(x_2-y_2)
\end{array}
\right)\det\left(
\begin{array}{cc}
q(y_1-z_1)&q(y_1-z_2)\\&\\
q(y_2-z_1)&q(y_2-z_2)
\end{array}
\right)dy_1dy_2.
\end{align*} 
\end{rem}
From Lemma \ref{lem:ConvProduct} and Remark \ref{rem:ConvProduct}, we easily deduce the following result:
\begin{prop}\label{prop:ConvProduct}
Let $p,q:\R\times\R\to\R_+$ be two TP$_2$ functions such that the product $r:\R\times\R\to\R_+$ given by:
$$
\forall\,x,z\in\R,\text{ }r(x,z)=\int_{\R}p(x,y)q(y,z)dy
$$
is finite. Then $r$ is TP$_2$.\\ In particular, if $p$ and $q$ are two integrable log-concave functions, then the convolution product $r=p\ast q$ is also log-concave. 
\end{prop}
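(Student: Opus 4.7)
The proof is essentially immediate once one has Lemma~\ref{lem:ConvProduct} in hand, so my plan is to deploy that lemma and then pass to convolutions.

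For the first assertion, I would take $\sigma$ in Lemma~\ref{lem:ConvProduct} to be Lebesgue measure on $\R$, so the formula (\ref{eq:ConvProduct}) becomes
\[
r\!\left(\begin{array}{c} x_1,x_2 \\ z_1,z_2 \end{array}\right)
= \iint_{y_1<y_2} p\!\left(\begin{array}{c} x_1,x_2 \\ y_1,y_2 \end{array}\right)
q\!\left(\begin{array}{c} y_1,y_2 \\ z_1,z_2 \end{array}\right) dy_1\,dy_2.
\]
Since $p$ and $q$ are TP$_2$, both factors under the integral sign are nonnegative for $y_1<y_2$, so the integrand is nonnegative and hence the $2\times 2$ determinant of $r$ is nonnegative. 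The finiteness hypothesis on $r$ ensures Lemma~\ref{lem:ConvProduct} applies (after noting that the integrability assumption there reduces here to finiteness of $r$, since $p,q\geq 0$). This yields that $r$ is TP$_2$.

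For the second assertion, I would view the ordinary convolution as a special case of the composition just treated. Given integrable log-concave $p,q:\R\to\R_+$, set
\[
P(x,y):=p(x-y), \qquad Q(y,z):=q(y-z).
\]
By the discussion following Corollary~\ref{corol:SmoothTP2}, log-concavity of $p$ (resp.\ $q$) is equivalent to TP$_2$-ness of $P$ (resp.\ $Q$); so $P$ and $Q$ are TP$_2$. Applying the first part to $P$ and $Q$, the function
\[
R(x,z):=\int_\R P(x,y)Q(y,z)\,dy = \int_\R p(x-y)q(y-z)\,dy = (p*q)(x-z)
\]
is TP$_2$ on $\R\times\R$, and invoking the equivalence once more shows that $p*q$ is log-concave.

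The only real step to watch is the equivalence between log-concavity of $f$ and TP$_2$-ness of $(x,y)\mapsto f(x-y)$, since Corollary~\ref{corol:SmoothTP2} was stated under smoothness and strict positivity. I would briefly observe that the TP$_2$ inequality $f(x_1-y_1)f(x_2-y_2)\geq f(x_1-y_2)f(x_2-y_1)$ for $x_1<x_2,\,y_1<y_2$ is, after the substitution $a=x_1-y_1$, $b=x_2-y_2$, $\delta=y_2-y_1>0$, exactly the statement that $f(a)f(b)\geq f(a-\delta)f(b+\delta)$, which is a reformulation of log-concavity valid without any regularity assumption; this removes the smoothness caveat. Beyond that technicality, there is no real obstacle.
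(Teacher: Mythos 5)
Your proof is correct and follows the paper's intended argument exactly: the paper presents Proposition~\ref{prop:ConvProduct} as an immediate consequence of Lemma~\ref{lem:ConvProduct} and Remark~\ref{rem:ConvProduct}, and you deploy precisely those ingredients, plus the equivalence between log-concavity of $f$ and TP$_2$-ness of $(x,y)\mapsto f(x-y)$ to translate the second assertion. One small slip in your last paragraph: after the substitution $a=x_1-y_1$, $b=x_2-y_2$, $\delta=y_2-y_1>0$, the hypothesis $x_1<x_2$ also imposes $a-\delta<b$; without that constraint the inequality $f(a)f(b)\geq f(a-\delta)f(b+\delta)$ is simply false for ordinary log-concave $f$ (take $f$ a symmetric bump, $a>b$ straddling the peak and $\delta$ large enough that $a-\delta<b$ fails). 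With the constraint stated, the equivalence you are gesturing at is exactly Theorem~\ref{theo:LogConcave} (An, Daduna--Szekli), which is formulated for Lebesgue-measurable $f$ --- harmless here, since $p$ and $q$ are integrable --- so it is cleaner to cite that theorem directly than to re-derive the characterization.
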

\begin{rem}
Proposition \ref{prop:ConvProduct} allows to regularise TP$_2$ functions in such a way that total positivity is preserved. Indeed, if $q$ is a TP$_2$ function such that, for every $z\in\R$, $q(\cdot,z)$ is integrable, then, for every $\ve>0$, the function $q_{\ve}$ defined by:
$$
\forall\,x,z\in\R,\text{ }q_{\ve}(x,z)=\frac{1}{\ve\sqrt{2\pi}}\int_{\R}\exp\left[-\frac{(x-y)^2}{2\ve^2}\right]q(y,z)dy
$$
is TP$_2$. Moreover,
$$
\forall\,z\in\R,\text{ }\lim\limits_{\ve\to 0}q_{\ve}(\cdot,z)=q(\cdot,z)\,\text{ in }L^{1}(\R).
$$
\end{rem}
 
\subsection{Markov processes with totally positive transition kernels}
In this paragraph,  we present some examples of Markov processes with totally positive transition kernels.
\begin{defi}
Let $P:=(P_{s,t}(x,dy),0\leq s<t,x\in I)$ be the transition function of a Markov process $((X_t,t\geq0),(\Pb_x,x\in I))$ with values in a sub-interval $I$ of $\R$. $P$ is said to be {\em totally positive of order 2} (TP$_2$) if, for every $0\leq s<t$, every $x_1<x_2$ elements of $I$, and every Borel subsets $E_1$, $E_2$ of $I$ such that $E_1<E_2$ (i.e. $a_1<a_2$ for every $a_1\in E_1$ and $a_2\in E_2$), we have:
\begin{equation}\label{eq:defMarkovTP2}
P_{s,t}\left(
\begin{array}{c}
x_1,x_2\\
E_1,E_2
\end{array}
\right):=\det\left(
\begin{array}{cc}
P_{s,t}(x_1,E_1)&P_{s,t}(x_1,E_2)\\ &\\
P_{s,t}(x_2,E_1)&P_{s,t}(x_2,E_2)
\end{array}
\right)\geq0.
\end{equation}
Suppose moreover that $(X_t,t\geq0)$ is time-homogeneous. Then $P$ is TP$_2$ if and only if 
\begin{equation}\label{eq:defHmgeneMarkovTP2}
P_{t}\left(
\begin{array}{c}
x_1,x_2\\
E_1,E_2
\end{array}
\right):=\det\left(
\begin{array}{cc}
P_{t}(x_1,E_1)&P_{t}(x_1,E_2)\\ &\\
P_{t}(x_2,E_1)&P_{t}(x_2,E_2)
\end{array}
\right)\geq0.
\end{equation} 
\end{defi}
\begin{rem}
Let $P:=(P_{s,t}(x,dy),0\leq s<t,x\in I)$ denote the transition function of a Markov process $((X_t,t\geq0),(\Pb_x,x\in I))$ taking values in a sub-interval $I$ of $\R$. We suppose that, for every $0\leq s<t$ and $x\in I$, $P_{s,t}(x,dy)$ has a continuous density $p_{s,t}(x,\cdot)$ with respect to a $\sigma$-finite regular measure. Then $P$ is TP$_2$ if and only if $p_{s,t}$ is TP$_2$, i.e. for every $x_1<x_2$, $y_1<y_2$ elements of $I$,
$$
p_{s,t}\left(
\begin{array}{c}
x_1,x_2\\
y_1,y_2
\end{array}
\right):=\det\left(
\begin{array}{cc}
p_{s,t}(x_1,y_1)&p_{s,t}(x_1,y_2)\\&\\
p_{s,t}(x_2,y_1)&p_{s,t}(x_2,y_2)
\end{array}
\right)\geq0.
$$
\end{rem}
\begin{defi}
Let $P:=(P_{s,t}(k,l),0\leq s<t,(k,l)\in I\times I)$ be the transition function of a continuous time Markov chain  $(X_t,t\geq0)$ which takes values in a sub-interval $I$ of $\Z$. We say that $P$ is TP$_2$ if, for every $0\leq s<t$ and every integers $k_1<k_2$, $l_1<l_2$ in $I$,
\begin{equation}\label{eq:defiMarkovZTP2}
P_{s,t}\left(
\begin{array}{c}
k_1,k_2\\
l_1,l_2
\end{array}
\right):=\det\left(
\begin{array}{cc}
P_{s,t}(k_1,l_1)&P_{s,t}(k_1,l_2)\\ &\\
P_{s,t}(k_2,l_1)&P_{s,t}(k_2,l_2)
\end{array}
\right)\geq0.
\end{equation}
If $(X_t,t\geq0)$ is time-homogeneous, then (\ref{eq:defiMarkovZTP2}) is equivalent to:
\begin{equation}\label{eq:defiMarkovHmgeneZTP2}
P_{t}\left(
\begin{array}{c}
k_1,k_2\\
l_1,l_2
\end{array}
\right):=\det\left(
\begin{array}{cc}
P_{t}(k_1,l_1)&P_{t}(k_1,l_2)\\ &\\
P_{t}(k_2,l_1)&P_{t}(k_2,l_2)
\end{array}
\right)\geq0.
\end{equation}
\end{defi}
There are many Markov processes with  totally positive transition kernels. Let us give some of them.
\subsubsection{Processes with independent and log-concave increments} 
Let $(X_{t},t\geq0)$ be a real valued process with independent increments, i.e.
\begin{equation}\label{eq:PAI}\tag{PII}
\forall\,0\leq s\leq t,\,\text{ }X_t-X_s\text{ is independent of }\mathcal{F}_s:=\sigma(X_u;\,0\leq u\leq s).
\end{equation}
We suppose that, for every $0\leq s<t$, the increment $X_t-X_s$ is log-concave. In other words, 
$X_t-X_s$ has a density $p_{s,t}$ (with respect to Lebesgue measure) which is log-concave, i.e.
\begin{equation}\label{eq:PaiLogConcave}
\forall\,x,y\in \R,\,\theta\in]0,1[,\text{ }p_{s,t}(\theta x+(1-\theta)y)\geq(p_{s,t}(x))^{\theta}(p_{s,t}(y))^{1-\theta}.
\end{equation}
If $X_t-X_s$ takes values in $\Z$, then (\ref{eq:PaiLogConcave}) may be replaced by
\begin{equation}\label{eq:PaiZLogConcave}
p_{s,t}^2(k)\geq p_{s,t}(k-1)p_{s,t}(k+1),
\end{equation}
where, for every $k\in\Z$, $p_{s,t}(k):=\Pb(X_t-X_s=k)$.\\
Many common r.v.'s are log-concave. Indeed, Gaussian, uniform, exponential, binomial, negative binomial, geometric and Poisson  r.v.'s are log-concave. On the contrary, Gamma r.v.'s with parameter $a\in(0,1)$ are not log-concave.
\begin{theorem}\label{theo:LogConcave}(An \cite{AN}, Daduna-Szekli\cite{DS}). A Lebesgue-measurable function $f:\R\to\R_+$ is log-concave 
 if and only if
\begin{equation}\label{eq:PaiTP2}
\forall\,x_1<x_2\in\R,\,y_1<y_2\in\R,\text{ }\det\left(
\begin{array}{cc}
f(x_1-y_1)&f(x_1-y_2)\\ &\\
f(x_2-y_1)&f(x_2-y_2)
\end{array}
\right)\geq0.
\end{equation}
\end{theorem}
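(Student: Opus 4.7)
The plan is to translate the TP$_2$ condition (\ref{eq:PaiTP2}) into an inequality about $f$ at four collinear points, and then recognise that inequality as essentially log-concavity. Setting $a=x_1-y_2$, $b=x_1-y_1$, $c=x_2-y_2$, $d=x_2-y_1$, one has $a<\min(b,c)\leq\max(b,c)<d$ together with the key algebraic identity $a+d=b+c$; and conversely, any quadruple satisfying these constraints arises from an admissible $(x_1,x_2,y_1,y_2)$ (solve directly for the $x_i,y_j$). So (\ref{eq:PaiTP2}) becomes
$$ f(b)\,f(c) \;\geq\; f(a)\,f(d) \quad\text{for all } a<b,c<d \text{ with } a+d=b+c, $$
and the theorem reduces to showing this is equivalent to log-concavity of $f$.

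For the ``log-concave $\Rightarrow$ (\ref{eq:PaiTP2})'' direction I will set $\lambda=(d-b)/(d-a)\in(0,1)$. Then $b=\lambda a+(1-\lambda)d$, and using $b+c=a+d$, automatically $c=(1-\lambda)a+\lambda d$. Two applications of log-concavity and summing the two inequalities give $\log f(b)+\log f(c)\geq \log f(a)+\log f(d)$, as required. Zeros cause no trouble because the support of a log-concave function is an interval, so $f(a)f(d)>0$ forces $f(b)f(c)>0$, while $f(a)f(d)=0$ makes the inequality trivial. For the converse, I will specialise the TP$_2$ inequality to $x_2-x_1=y_2-y_1$, which forces $b=c=(a+d)/2$. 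Since every pair $a<d$ is realised this way, one deduces the midpoint log-concavity
$$ f\!\left(\frac{a+d}{2}\right)^{\!2} \;\geq\; f(a)\,f(d), \qquad a<d. $$
This first shows that $S:=\{f>0\}$ is an interval (the square is zero if one of $f(a),f(d)$ vanishes, but nonzero midpoint values in between would contradict the inequality applied symmetrically) and then that $\log f\!\mid_S$ is midpoint-concave.

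The only delicate step, and the main obstacle, is the last one: upgrading midpoint log-concavity of a Lebesgue-measurable function on an interval to genuine log-concavity. I would invoke here the classical theorem of Sierpi\'nski (in the spirit of Bernstein-Doetsch), which states that every Lebesgue-measurable midpoint-concave function on an interval is concave. This is precisely where the measurability hypothesis in the statement is used; without it, the usual Hamel-basis construction produces pathological midpoint-concave but wildly non-concave examples. Once this upgrade is in hand, the two implications fit together to give the stated equivalence.
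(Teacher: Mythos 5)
The paper does not reprove this theorem but cites An and Daduna-Szekli, so there is no in-text argument to compare against; I will therefore assess your proposal on its own terms. Your reformulation of (\ref{eq:PaiTP2}) as ``$f(b)f(c)\geq f(a)f(d)$ for all $a<b,c<d$ with $a+d=b+c$'' is correct, and the direction ``log-concave $\Rightarrow$ (\ref{eq:PaiTP2})'' is handled cleanly, including the treatment of zeros.

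The converse, however, has a genuine gap: you specialise to $x_2-x_1=y_2-y_1$ and retain only the midpoint inequality $f\bigl(\tfrac{a+d}{2}\bigr)^2\geq f(a)f(d)$, and this is strictly weaker than what you started with. Midpoint log-concavity together with Lebesgue measurability does \emph{not} imply log-concavity: $f=1_{\Q}$ is measurable, satisfies $f\bigl(\tfrac{a+d}{2}\bigr)^2\geq f(a)f(d)$ for all $a<d$ (both sides vanish unless $a,d\in\Q$, in which case both equal $1$), yet its support $\Q$ is not an interval and $f$ is not log-concave. In particular your parenthetical claim that midpoint log-concavity forces $S=\{f>0\}$ to be an interval is false, and this is exactly the step on which the appeal to Sierpi\'nski/Bernstein--Doetsch was supposed to rest (that theorem is for finite-valued Jensen-concave functions, so you cannot invoke it before knowing that $\log f$ is finite on the interval in question). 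Note that $1_{\Q}$ is \emph{not} TP$_2$: with $a=0,\,d=1,\,b=1/\pi,\,c=1-1/\pi$ one has $f(a)f(d)=1>0=f(b)f(c)$; so the extra generality of the TP$_2$ inequality is not decorative, it is precisely what rules out such pathologies.

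The fix is to use the full inequality rather than its midpoint specialisation. If $f(a)>0$, $f(d)>0$ and $t\in(a,d)$, take $b=t$ and $c=a+d-t$; then $a<b,c<d$ and $b+c=a+d$, so (\ref{eq:PaiTP2}) gives $f(t)f(a+d-t)\geq f(a)f(d)>0$ and hence $f(t)>0$. Thus $\{f>0\}$ is an interval with no measurability hypothesis at all. On the interior of that interval, $\log f$ is finite, measurable, and midpoint-concave, so Sierpi\'nski's theorem upgrades it to concavity; a short additional use of the full TP$_2$ inequality (letting $t\to a^+$ with the other argument tending to an interior point) controls the boundary values and completes the proof.
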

\noindent
Then, $(X_t,t\geq0)$ is a Markov process with transition function $P_{s,t}(x,dy)$ given by:
\begin{equation}\label{eq:PAITransition}
\forall\,0\leq s<t,\,x\in\R,\text{ }P_{s,t}(x,dy)=p_{s,t}(y-x)dy,
\end{equation}
and, since $X_t-X_s$ is log-concave, we deduce from (\ref{eq:PAITransition}) and from Theorem \ref{theo:LogConcave} that $P_{s,t}(x,dy)$ is TP$_2$.

\subsubsection{Absolute value of a process with independent, symmetric and PF$_{\infty}$ increments}
Let $(X_t,t\geq0)$ be a real valued process with independent increments (\ref{eq:PAI}) such that, for every $0\leq s<t$, $X_t-X_s$ is symmetric and has a density denoted $p_{s,t}$. Note that $p_{s,t}$  is symmetric, i.e., for every $x\in\R_+$, $p_{s,t}(x)=p_{s,t}(-x)$. Then, $(|X_t|,t\geq0)$ is a Markov process whose transition function is given by:
$$
P_{s,t}(x,dy):=p^{\ast}_{s,t}(x,y)dy\,\text{ }(0\leq s<t,\,x\in\R),
$$
where
$$
p^{\ast}_{s,t}(x,y)=p_{s,t}(y-x)+p_{s,t}(-x-y).
$$
We suppose in addition that, for every $0\leq s<t$, $p_{s,t}$ is a P\'olya frequency (PF$_{\infty}$) function, i.e. for every integers $r\geq 1$, $1\leq m\leq r$, and every $x_1<x_2<\cdots<x_{m}$, $y_1<y_2<\cdots<y_m$,
\begin{equation}\label{eq:PF}\tag{PF$_{\infty}$} 
\det[p_{s,t}(x_i-y_j);\,1\leq i,j\leq m]\geq0.
\end{equation}
Then the transition function $P_{s,t}(x,dy)$ is TP$_2$, i.e., for every $0\leq s<t$, $p^{\ast}_{s,t}$ is TP$_2$. This is a direct consequence of the following result due to Karlin \cite{KA}.
\begin{theorem}(Karlin \cite{KA}).\label{theo:PFsymTP2}
Let $f:\R\to\R_+$ be a symmetric PF$_{\infty}$ density function. Then  the function $f^{\ast}$  defined on $\R_+\times\R_+$ by
$$
f^{\ast}(x,y)=f(x-y)+f(-x-y)
$$
is a TP$_2$ function. 
\end{theorem}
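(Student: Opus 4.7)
My plan is to reduce the claim to two elementary families of densities via a composition identity, then invoke Schoenberg's structural theorem for symmetric PF$_\infty$ densities. The central observation is that whenever $f$ and $g$ are symmetric densities on $\R$, the folded kernels satisfy the composition identity
\begin{equation*}
(f\ast g)^{\ast}(x,y) \;=\; \int_0^{\infty} f^{\ast}(x,v)\, g^{\ast}(v,y)\, dv, \qquad x,y\ge 0.
\end{equation*}
Once this identity is in hand, Proposition \ref{prop:ConvProduct} (extended to $\R_+\times\R_+$ via Remark \ref{rem:TP2SetD}, taking $D=\R_+\times\R_+$, which satisfies property (\ref{eq:SetD})) shows that TP$_2$-ness of the folded kernel is preserved under convolution of symmetric PF$_\infty$ densities.

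To derive the composition identity, I would start from $(f\ast g)^{\ast}(x,y)=(f\ast g)(x-y)+(f\ast g)(x+y)=\int_{\R} f(u)[g(x-y-u)+g(x+y-u)]\,du$, make the change of variables $v=x-u$ to obtain $\int_{\R}f(x-v)[g(v-y)+g(v+y)]\,dv=\int_{\R}f(x-v)g^{\ast}(v,y)\,dv$, and then fold the integral onto $[0,\infty)$: the symmetry of $g$ yields $g^{\ast}(-v,y)=g^{\ast}(v,y)$, while the symmetry of $f$ lets one combine $f(x-v)+f(x+v)=f^{\ast}(x,v)$ in the integrand over $v\ge 0$.

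For the base cases, I verify $f^{\ast}$ is TP$_2$ directly on the two elementary symmetric PF$_\infty$ families that generate all of them under convolution. For a Gaussian $f(x)=\frac{1}{\sqrt{2\pi}}e^{-x^2/2}$, the identity $f^{\ast}(x,y)=\sqrt{2/\pi}\,e^{-(x^2+y^2)/2}\cosh(xy)$ together with Corollary \ref{corol:SmoothTP2} settles the matter, since $\frac{\partial^2}{\partial x\partial y}\log\cosh(xy)=\tanh(xy)+xy\,\mathrm{sech}^2(xy)\ge 0$ on $\R_+\times\R_+$. For the symmetric exponential $f(x)=\frac{1}{2}e^{-|x|}$, the closed form $f^{\ast}(x,y)=e^{-\max(x,y)}\cosh(\min(x,y))$ reduces the TP$_2$ inequality to the algebraic identity $e^{-a}\cosh(b)-e^{-b}\cosh(a)=\sinh(b-a)$, which is nonnegative when $b\ge a$; a case split on the ordering of $\{x_1,x_2,y_1,y_2\}$ finishes the check.

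To conclude, Schoenberg's structural theorem represents every symmetric PF$_\infty$ density as a weak limit of finite convolutions of Gaussian and symmetric exponential densities. Iterating the composition identity yields TP$_2$-ness of $f^{\ast}$ for every finite such convolution, and the extension to the limit follows from continuity of the $2\times 2$ determinant entering the TP$_2$ condition (the integrals defining $f^{\ast}$ are continuous under $L^1$ convergence of densities). The hardest step is really the honest derivation of the composition identity, whose folding step crucially exploits the joint symmetry of $f$ and $g$ and would fail for generic PF$_\infty$ densities.
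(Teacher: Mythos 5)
Your argument follows Karlin's original route, which is exactly what the paper indicates just after the statement (``the author applies Schoenberg's characterisation of symmetric PF$_{\infty}$ densities in terms of their Laplace transforms''), so in spirit this is the same approach. The composition identity for folded kernels is correct and is the crucial lemma; your derivation, using the symmetry of $f$ and $g$ to fold the $v$-integral onto $[0,\infty)$, is sound, and combined with Proposition \ref{prop:ConvProduct} on $D=\R_+\times\R_+$ it propagates TP$_2$-ness of $f^{\ast}$ through convolutions. The Gaussian base case via Corollary \ref{corol:SmoothTP2} is fine, and the closed form $f^{\ast}(x,y)=e^{-\max(x,y)}\cosh(\min(x,y))$ for the symmetric exponential does give a TP$_2$ kernel; one can even avoid the case split by observing that $\log f^{\ast}(x,y)=\min(-x,-y)+\min(\log\cosh x,\log\cosh y)$ is a sum of two supermodular functions of $(x,y)$.

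The one step that is not justified as written is the passage to the limit. You assert that ``the integrals defining $f^{\ast}$ are continuous under $L^1$ convergence of densities,'' but $f^{\ast}(x,y)=f(x-y)+f(x+y)$ is a sum of two \emph{point evaluations} of $f$, not an integral against $f$, and $L^1$ convergence of the partial-convolution densities $f_n$ to $f$ does not give $f_n(x)\to f(x)$ pointwise. The gap is easy to close but needs a different argument: split off one fixed factor $g$ (the Gaussian component if $\alpha>0$, or a single symmetric exponential factor otherwise), so that $f_n=g\ast\mu_n$ with $g$ bounded, continuous and integrable, and $\mu_n$ the law of the remaining partial sum. Since $\sum_i a_i^2<\infty$, the remaining partial sums converge a.s., hence $\mu_n\to\mu$ weakly, and therefore $f_n=g\ast\mu_n\to g\ast\mu=f$ locally uniformly (because $t\mapsto g(x-t)$ is bounded and continuous). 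This gives pointwise convergence of the four entries of the $2\times2$ determinant in the TP$_2$ condition, and the proof is then complete.
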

We mention that Theorem \ref{theo:PFsymTP2} remains valid if we consider discrete and symmetric PF$_{\infty}$ densities. To prove Theorem \ref{theo:PFsymTP2}, the author applies Schoenberg's characterisation of symmetric PF$_{\infty}$ densities in terms of their Laplace transforms. 
\begin{theorem}(Schoenberg \cite{Sch}).\label{theo:PFcontCharact}
A symmetric density function $f:\R\to\R_+$ is PF$_{\infty}$ if and only if its Laplace transform $\Phi:\,s\longmapsto\displaystyle\int_{-\infty}^{\infty} e^{-sy}f(y)dy$ exists in a strip (of the complex plane) including the imaginary axis in its interior and has the form
\begin{equation}\label{eq:PFrealForm}
\Phi(s)=\frac{e^{\alpha s^2}}{\prod\limits_{i=1}^{\infty}(1-a_i^2s^2)},
\end{equation} 
where $\alpha\geq0$, $a_i\in\R$ for every $i$, and $0<\alpha+\sum\limits_{i=1}^{\infty}a_i^2<\infty$.
\end{theorem}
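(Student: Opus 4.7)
The plan is to treat the two implications separately, with the easier ``sufficiency'' direction relying on the convolution closure of the PF$_\infty$ class (which extends Proposition \ref{prop:ConvProduct} to higher-order minors), and the more delicate ``necessity'' direction relying on complex analysis via Hadamard's factorisation theorem.

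For sufficiency, I would first identify each factor in the proposed expression for $\Phi(s)$ as the Laplace transform of a known symmetric PF$_\infty$ density: the factor $e^{\alpha s^2}$ is the Laplace transform of a centered Gaussian density with variance $2\alpha$ (symmetric and PF$_\infty$), while each factor $(1-a_i^2 s^2)^{-1}$ is the Laplace transform of the symmetric two-sided exponential density $y\mapsto (2|a_i|)^{-1}\exp(-|y|/|a_i|)$, whose PF$_\infty$ character is verified directly by a Vandermonde-type computation of its minors. Next I would show that the truncated products $\Phi_N(s)=e^{\alpha s^2}\prod_{i=1}^{N}(1-a_i^2s^2)^{-1}$ are Laplace transforms of finite convolutions of these kernels and hence are Laplace transforms of symmetric PF$_\infty$ densities, by applying the multivariate extension of the determinantal composition formula in Lemma \ref{lem:ConvProduct}. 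Finally, the summability condition $\alpha+\sum a_i^2<\infty$ controls the variances and yields tightness of the partial convolutions, so they converge in $L^1$ to a symmetric density $f$, and the defining determinantal inequalities of PF$_\infty$ pass to the limit.

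For necessity, the starting point is that a PF$_\infty$ density $f$ has exponentially decaying tails, so its bilateral Laplace transform $\Phi$ is analytic and non-vanishing in a vertical strip containing the imaginary axis, and the symmetry of $f$ forces $\Phi(s)=\Phi(-s)$. The central step is to show that $\Psi:=1/\Phi$ extends to an \emph{even entire function of order at most $2$ whose zeros are all real}. The order estimate follows from precise bounds on $\Phi$ along vertical lines, derived from the log-concavity consequences of (\ref{eq:PF}) applied to a smoothed version of $f$. The reality of the zeros of $\Psi$ is the crucial input: it is obtained from the variation-diminishing property of PF$_\infty$ kernels (a non-real pole pair of $\Phi$ would allow one to construct a test function whose convolution with $f$ has strictly more sign changes, contradicting PF$_\infty$). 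Once these two facts are in hand, Hadamard's factorisation, together with evenness and the normalisations $\Phi(0)=1$, $\Phi'(0)=0$, yields
$$\Psi(s)=e^{-\alpha s^2}\prod_{i=1}^{\infty}(1-a_i^2 s^2),$$
with convergence of the product controlled exactly by $\alpha+\sum a_i^2<\infty$, which is the standard convergence condition for genus-$2$ Weierstrass products; taking reciprocals gives the claimed form of $\Phi$.

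The main obstacle is the reality of the zeros of $1/\Phi$, since this is what genuinely distinguishes the PF$_\infty$ class from the wider class of log-concave or log-convex densities. To handle it cleanly, I would first regularise $f$ by convolving with a narrow Gaussian (which preserves PF$_\infty$ by the sufficiency part applied to a single Gaussian factor), reduce to the case of a smooth PF$_\infty$ kernel, establish reality of zeros there via the variation-diminishing argument, and then pass to the limit as the Gaussian width tends to zero, using compactness of the Laguerre--P\'olya class under locally uniform convergence of the Laplace transforms.
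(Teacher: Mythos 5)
The paper does not prove this theorem; it states it as a cited classical result of Schoenberg \cite{Sch} and uses it only as an input to Theorem~\ref{theo:PFsymTP2}. So there is no internal proof to compare to, and your sketch should be judged on its own.

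Your outline is a reasonable reconstruction of Schoenberg's original argument. For sufficiency, identifying $e^{\alpha s^2}$ as the bilateral Laplace transform of a centered Gaussian with variance $2\alpha$, and $(1-a_i^2s^2)^{-1}$ as that of the symmetric two-sided exponential $y\mapsto(2|a_i|)^{-1}e^{-|y|/|a_i|}$, then invoking closure of PF$_\infty$ under convolution (the full-rank Cauchy--Binet composition formula, not merely the rank-$2$ version in Lemma~\ref{lem:ConvProduct}), is the standard route. For necessity, reducing to showing $1/\Phi$ is an even entire function of order at most $2$ with all zeros real and then applying Hadamard factorisation is precisely where the content lies. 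A few places would need tightening in a full write-up. First, tightness of the partial convolutions gives only weak convergence of measures; to obtain an $L^1$ limit density and to pass the determinantal inequalities to the limit you need pointwise a.e.\ convergence, which one extracts from log-concavity of the partial convolutions together with Scheff\'e's lemma. Second, Hadamard's theorem for an even function of order $\leq 2$ would in general leave you with genus-$2$ Weierstrass factors; you only reach the bare product $\prod(1-a_i^2s^2)$ because PF$_\infty$ forces the stronger summability $\sum a_i^2<\infty$ (equivalently, finite second moment of $f$), which makes the genus-$0$ product in the variable $s^2$ converge and allows the exponential convergence factors to be absorbed into $e^{\alpha s^2}$. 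You should derive that summability from the PF$_\infty$ structure rather than attribute it to ``the standard convergence condition for genus-$2$ products''; as written that is not quite accurate. Third, the order-$\leq 2$ bound on $1/\Phi$ needs an explicit growth estimate along vertical lines, which you only gesture at. None of these is a fatal flaw, but they mark where a rigorous version of your sketch requires genuine work, and where the crucial real-zeros step (via the variation-diminishing property) must be made precise.
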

A discrete analog of Theorem \ref{theo:PFcontCharact} has been proved by Edrei \cite{ED}.
\begin{theorem}(Edrei \cite{ED}).\label{theo:PFdiscreteCharact}
A symmetric density function $f:\Z\to\R_+$ is PF$_{\infty}$ if and only if its Laurent series
$S(z)=\displaystyle\sum\limits_{k=-\infty}^{\infty}f(k)z^k$ converges in some ring (of the complex plane) including the unit circle in its interior, and the analytic continuation $\widetilde{S}$ of $S$ is of the form
\begin{equation}\label{eq:PFdiscreteForm}
\widetilde{S}(z)=C\exp\left(a\left(z+z^{-1}\right)\right) \frac{\prod\limits_{i=1}^{\infty} \left((1+\alpha_i^2)+\alpha_i\left(z+\dfrac{1}{z}\right)\right)}{\prod\limits_{i=1}^{\infty}(1-\gamma_iz)(1-\gamma_iz^{-1})},
\end{equation}
where $C\geq0$, $a\geq0$, $\alpha_i\geq0$, $0\leq \gamma_i<1$ for every $i$, and $\sum\limits_{i=1}^{\infty}(\alpha_i+\gamma_i)<\infty$.
\end{theorem}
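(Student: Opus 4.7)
My plan is to split into the (easy) sufficiency and the (hard) necessity directions, following the strategy of Edrei which reduces the two-sided symmetric case to the one-sided Aissen--Edrei--Schoenberg--Whitney characterisation.

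For the implication from the product form to PF$_{\infty}$, I would argue factor by factor. Since multiplication of generating functions corresponds to convolution of sequences, and convolution preserves PF$_{\infty}$ by iterating the composition formula (\ref{eq:ConvProduct}) on minors of all orders (a discrete Cauchy--Binet identity), it suffices to check each building block separately. The constant $C\geq 0$ is harmless. The factor $\exp\bigl(a(z+z^{-1})\bigr)$ produces modified Bessel-type coefficients and can be realised as the limit of powers of the trivial symmetric trinomial $1+\frac{a}{n}(z+z^{-1})$, each of which is PF$_{\infty}$. The factor $(1+\alpha_i^2)+\alpha_i(z+z^{-1})$ corresponds to the three-term symmetric sequence $(\alpha_i,1+\alpha_i^2,\alpha_i)$, for which the PF$_{\infty}$ condition reduces to finitely many explicit determinantal inequalities that follow from $1+\alpha_i^2\geq 2\alpha_i$. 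Finally, $\bigl((1-\gamma_iz)(1-\gamma_iz^{-1})\bigr)^{-1}$ is the product of the geometric series $(1-\gamma_iz)^{-1}$ and its reflection $(1-\gamma_iz^{-1})^{-1}$; each is obviously a one-sided PF$_{\infty}$ sequence, so their convolution is PF$_{\infty}$.

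For the converse, starting from a symmetric PF$_{\infty}$ density $f$ on $\Z$, the first step is to establish convergence of $S(z)$ in an annulus around the unit circle; this follows from the PF$_{\infty}$ inequalities, which force the ratios $f(k+1)/f(k)$ to be monotone and to tend to $0$, yielding geometric decay of $f$ in both directions. The second step exploits the symmetry $\widetilde{S}(z)=\widetilde{S}(1/z)$ to perform a Wiener--Hopf style factorisation $\widetilde{S}(z)=\widetilde{T}(z)\,\widetilde{T}(1/z)$, after which one applies the one-sided Aissen--Edrei--Schoenberg--Whitney theorem to $\widetilde{T}$ to recover the desired infinite product representation with the cross terms $e^{a(z+z^{-1})}$ and $(1+\alpha_i^2)+\alpha_i(z+z^{-1})$ arising naturally when $\widetilde{T}(z)$ is multiplied by $\widetilde{T}(1/z)$.

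The main obstacle, and the reason this theorem is genuinely deep, is the localisation of the zeros and poles of the meromorphic continuation $\widetilde{S}$: one must show that they all lie on the real line and that their reciprocals satisfy the convergence condition $\sum(\alpha_i+\gamma_i)<\infty$, which is precisely what allows the genus-one Hadamard product. This quantitative zero-counting is the technical heart of Edrei's argument and cannot really be shortcut; any self-contained proof essentially reproduces the contents of \cite{ED}, which is why we merely quote the result here.
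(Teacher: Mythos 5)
The paper does not prove this theorem: it is quoted verbatim from Edrei \cite{ED} as a background fact, with no argument given, so there is no in-paper proof for your sketch to be compared against. With that caveat, your outline is a reasonable map of what a real proof looks like, and you are honest that the hard part (the genus-one Hadamard factorisation and the zero localisation) is simply Edrei's argument.

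Two small remarks on the sufficiency half, which is the part you actually gesture at proving. First, for the factor $(1+\alpha_i^2)+\alpha_i(z+z^{-1})$ the statement that PF$_{\infty}$ for the three-term sequence $(\alpha_i,\,1+\alpha_i^2,\,\alpha_i)$ reduces to ``finitely many determinantal inequalities following from $1+\alpha_i^2\geq 2\alpha_i$'' is not accurate as written, since PF$_{\infty}$ involves minors of all orders. The clean route is the identity $(1+\alpha_i^2)+\alpha_i(z+z^{-1})=(1+\alpha_i z)(1+\alpha_i z^{-1})$: each factor $1+\alpha_i z$ (with $\alpha_i\geq0$) is a one-sided polynomial with a single negative real root, hence PF$_{\infty}$ by the Aissen--Edrei--Schoenberg--Whitney criterion, and the product is handled by Cauchy--Binet as you say. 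The inequality $1+\alpha_i^2\geq 2\alpha_i$ is exactly the real-rootedness condition for the quadratic $\alpha_i+(1+\alpha_i^2)z+\alpha_i z^2$, which is the conceptual explanation, not a finite determinantal check. Second, the approximation $\exp\bigl(a(z+z^{-1})\bigr)=\lim_n\bigl(1+\tfrac{a}{n}(z+z^{-1})\bigr)^n$ is fine, but the base trinomial is only PF$_{\infty}$ once $\tfrac{a}{n}<\tfrac12$ (so that $\tfrac{a}{n}+z+\tfrac{a}{n}z^2$ has real roots); you should restrict to $n>2a$, which of course costs nothing in the limit. As for the necessity half, your description of the Wiener--Hopf factorisation and the appeal to the one-sided AESW theorem is a correct sketch of the strategy, but it is a sketch; since the paper itself just cites \cite{ED}, this is consistent with the paper's level of detail.
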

Here are some examples of symmetric PF$_{\infty}$ density functions.
\begin{exa}
By Theorem \ref{theo:PFcontCharact},
\item[i)] a symmetric gaussian density is PF$_{\infty}$,
\item[ii)] for every $\lambda>0$, the density function $f(x)=\frac{\lambda}{2}\exp(-\lambda|x|)$ is PF$_{\infty}$.
\end{exa}
\begin{exa}Theorem \ref{theo:PFdiscreteCharact} applies in the following cases:
\item[i)]For every $\alpha\in\R_+$, the density function $f:\Z\to\R_+$ defined by
$$
f(0)=\frac{1+\alpha^2}{(1+\alpha)^2},\,\text{ }\,f(1)=f(-1)=\frac{\alpha}{(1+\alpha)^2},
$$
and $f(k)=0$ if $k\notin\{-1,0,1\}$ is PF$_{\infty}$. Indeed, if we define
$$
S(z):=\sum\limits_{k=-\infty}^{\infty}f(k)z^k,
$$
then
$$
S(z)=\frac{1}{(1+\alpha)^2}\left[(1+\alpha^2)+\alpha\left(z+\frac{1}{z}\right)\right]
$$
which is of the form (\ref{eq:PFdiscreteForm}).
\item[ii)]Let $a>0$ and $c=\dfrac{(1-e^a)^2}{1-e^{-2a}}$. The density function $f:\Z\to\R_+$ given by 
$$
f(0)=c\,\text{ and }\,f(k)=c\,e^{-a|k|},\,\text{ }k\in\Z\setminus\{0\}
$$
is PF$_{\infty}$, since its Laurent series $\sum\limits_{k=-\infty}^{\infty}f(k)z^k$ admits the representation
$$
\widetilde{S}(z)=\frac{(1-e^{-a})^2}{(1-e^{-a}z)\left(1-\dfrac{e^{-a}}{z}\right)}
$$
which is of the form (\ref{eq:PFdiscreteForm}).
\end{exa}

\subsubsection{One-dimensional diffusions}
An important class of real valued Markov processes with TP$_2$ transition kernels consists of one-dimensional diffusions.
\begin{theorem}\label{theo:TimeHomoDiffuTP2}(Karlin-Taylor \cite{KaT}, Chapter 15, Problem 21).\\
Let $((X_t,t\geq0),(\Pb_x,x\in I))$ be a one-dimensional diffusion on a sub-interval $I$ of $\R$, and let $(P_t(x,dy),t\geq0,x\in I)$ be its transition function. We suppose that $(t,x)\longmapsto P_t(x,dy)$ is continuous in $x$ for every $t$. Then, for every $t\geq0$, $P_t(x,dy)$ is TP$_2$, i.e. for every $x_1,x_2\in I$ and every Borel subsets $E_1<E_2$ of $I$,
\begin{equation}\label{eq:DiffuMarkovTP2}
\det\left(
\begin{array}{cc}
P_t(x_1,E_1)&P_t(x_1,E_2)\\ &\\
P_t(x_2,E_1)&P_t(x_2,E_2)
\end{array}
\right)\geq0.
\end{equation}
In particular, if $P_t(x,dy)=p_t(x,y)dy$, and if $p_t$ is continuous in $y$ for every $x$, then  
$p_t$ is a TP$_2$ function.
\end{theorem}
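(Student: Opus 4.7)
The strategy is the classical coupling-and-swap argument at the first meeting time, resting on two structural features of a one-dimensional diffusion: sample path continuity and the strong Markov property. Everything reduces to showing $P_t(x_1,E_1)\,P_t(x_2,E_2)\geq P_t(x_1,E_2)\,P_t(x_2,E_1)$ for $x_1<x_2$ in $I$ and Borel sets $E_1<E_2$ in $I$.

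First, I would construct on a common probability space two independent copies $(X^{(1)}_s,\,s\geq0)$ and $(X^{(2)}_s,\,s\geq0)$ of the diffusion started respectively from $x_1$ and $x_2$, and introduce the first meeting time
$$
\tau:=\inf\{s\geq0:\,X^{(1)}_s=X^{(2)}_s\}.
$$
By path continuity and the intermediate value theorem, $X^{(1)}_s<X^{(2)}_s$ on $\{\tau>s\}$. Applying the strong Markov property of the pair $(X^{(1)},X^{(2)})$ at the stopping time $\tau$, the two post-$\tau$ trajectories are, conditionally on $\mathcal{F}_\tau$, independent copies of the diffusion issued from the common value $X^{(1)}_\tau=X^{(2)}_\tau$. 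Consequently the ``swapped'' pair
$$
\widetilde{X}^{(1)}_s=X^{(1)}_s\mathbf{1}_{\{s<\tau\}}+X^{(2)}_s\mathbf{1}_{\{s\geq\tau\}},\quad \widetilde{X}^{(2)}_s=X^{(2)}_s\mathbf{1}_{\{s<\tau\}}+X^{(1)}_s\mathbf{1}_{\{s\geq\tau\}}
$$
has the same joint law as $(X^{(1)},X^{(2)})$.

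Next I would fix $t\geq0$ and the event $A:=\{X^{(1)}_t\in E_2,\,X^{(2)}_t\in E_1\}$. On $\{\tau>t\}\cap A$ one would have both $X^{(1)}_t<X^{(2)}_t$ (from the coupling) and $X^{(1)}_t>X^{(2)}_t$ (since $E_1<E_2$), a contradiction; thus $A\subset\{\tau\leq t\}$. On $\{\tau\leq t\}$ the swap sends $A$ onto $\{\widetilde{X}^{(1)}_t\in E_1,\,\widetilde{X}^{(2)}_t\in E_2\}$, so using equality in law of the two pairs together with independence of $X^{(1)}$ and $X^{(2)}$,
$$
P_t(x_1,E_2)\,P_t(x_2,E_1)=\Pb(A)\leq\Pb(X^{(1)}_t\in E_1,\,X^{(2)}_t\in E_2)=P_t(x_1,E_1)\,P_t(x_2,E_2),
$$
which is precisely (\ref{eq:DiffuMarkovTP2}). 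The claim about densities then follows by applying the inequality to $E_i=(y_i-\ve,y_i+\ve)$, dividing by $4\ve^2$ and sending $\ve\to0$, using continuity of $p_t$ in $y$.

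The main difficulty is the rigorous implementation of the swap: one must verify that $\tau$ is a stopping time in a filtration for which $(X^{(1)},X^{(2)})$ genuinely enjoys the strong Markov property, and that the conditional law of the two post-meeting trajectories given $\mathcal{F}_\tau$ is invariant under swap. The continuity in $x$ of $P_t(x,dy)$ assumed in the statement is what makes this workable: it guarantees that the diffusion is a bona fide strong Markov process in the sense of Dynkin, so that the post-$\tau$ evolution depends measurably on the common starting point $X^{(1)}_\tau=X^{(2)}_\tau$ and the exchangeability of the two continuations is available.
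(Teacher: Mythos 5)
Your proof is correct and is precisely the Karlin--McGregor reflection/coupling argument that the paper alludes to immediately after the theorem, where the determinant (\ref{eq:DetHomoStrongMarkov}) is interpreted as the probability that two independent copies started at $x_1$ and $x_2$ end up in $E_1$ and $E_2$ respectively without having collided. The paper itself offers no proof --- it only cites Karlin--Taylor and explains the Karlin--McGregor interpretation --- so your swap-at-the-first-meeting-time argument supplies the justification that the text leaves implicit, and the caveats you flag at the end (that $\tau$ must be a stopping time for the pair and that the pair must be strong Markov, which is where the assumed continuity of $x\mapsto P_t(x,dy)$ enters) are exactly the right technical points to raise.
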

This result is due at origin to Karlin-McGregor \cite{KaM1} who showed that (\ref{eq:DiffuMarkovTP2}) has a probabilistic interpretation: {\em Suppose that two  particles $q_1$ and $q_2$, started at time zero in states $x_1$ and $x_2$ respectively, execute the process $(X_t,t\geq0)$ simultaneously and independently. Then the determinant 
\begin{equation}\label{eq:DetHomoStrongMarkov}
 \det\left(
\begin{array}{cc}
P_t(x_1,E_1)&P_t(x_1,E_2)\\&\\
P_t(x_2,E_1)&P_t(x_2,E_2)
\end{array}
\right)
\end{equation}
is equal to the probability that at time $t$, $q_1$ is located in $E_1$ and $q_2$ is located in $E_2$ without these particles having occupied simultaneously a common state at some earlier time $\tau<t$.} \\ Karkin-McGregor \cite{KaM1} reach the same result for several time-homogeneous and strong Markov processes whose state space is a subset of the real line. In particular, the transition probability matrix of a birth and death process is TP$_2$.\\
Moreover, as a consequence of Theorem \ref{theo:TimeHomoDiffuTP2}, bridges of  one-dimensional diffusions have  TP$_2$ transition functions. We refer to \cite{FPY} for a rigorous definition of the bridge of a one-dimensional diffusion.

\subsection{Multivariate total positivity of order 2}
\begin{defi}\label{defi:MTP2}
\item[1)]A function $p:\R^n\to\R_+$ ($n\in\N$, $n\geq 2$) is said to be multivariate totally positive of order 2 (MTP$_2$) if for every $\mathbf{x}=(x_1,\cdots,x_n)$ and $\mathbf{y}=(y_1,\cdots,y_n)$ in $\R^n$,
\begin{equation}\label{eq:MTP2}\tag{MTP$_2$}
p(\mathbf{x}\wedge \mathbf{y})p(\mathbf{x}\vee \mathbf{y})\geq p(\mathbf{x})p(\mathbf{y}),
\end{equation}
where 
\begin{equation}
\begin{array}{ccc}
& &\mathbf{x}\wedge \mathbf{y}=(\min(x_1, y_1),\cdots,\min(x_n,y_n))\\
and & &\\
& &\mathbf{x}\vee \mathbf{y}=(\max(x_1,y_1),\cdots,\max(x_n,y_n)).
\end{array}
\end{equation}
\item[2)]A random vector $(X_1,\cdots,X_n)$ with real components is said to be multivariate totally positive of order 2 (MTP$_2$) if it is absolutely continuous with respect to a $\sigma$-finite product measure (which we shall always denote $dx_1\cdots dx_n$) and if its density $p:\R^n\to\R_+$ is MTP$_2$.
\end{defi}
\begin{rem}\label{rem:MTP2}
By definition, the MTP$_2$ property is invariant under permutations, i.e. if a random vector $(X_1,\cdots,X_n)$ is  MTP$_2$, then, for every permutation $\pi$ of $\{1,\cdots,n\}$, $(X_{\pi(1)},\cdots,X_{\pi(n)})$ is MTP$_2$.
\end{rem}
The following result is proved in Karlin-Rinot \cite{KR}.
\begin{theorem}\label{theo:MTP2Density}(Karlin-Rinot \cite{KR}).
\item[1)]If $(X_1,\cdots,X_n)$ is a MTP$_2$ random vector, then, for every $k\in\N$, $2\leq k\leq n$, $(X_1,\cdots,X_k)$ is MTP$_2$.
\item[2)]Let $p:\R^n\to\R_+$ be a MTP$_2$ density. Then, for every $k\in\N$, $2\leq k\leq n$, and for every continuous and bounded functions $f_i:\R\to\R_+$, $i=1,\cdots,n$, the function $p^{(k)}:\R^k\to\R_+$ given by:
\begin{align*}
&p^{(k)}(x_1,\cdots,x_k)\\
&=\prod\limits_{i=1}^kf_i(x_i)\int_{\R^{n-k}}p(x_1,\cdots,x_k,u_{k+1},\cdots,u_n)\prod\limits_{j=k+1}^nf_j(u_j)du_{k+1}\cdots du_n
\end{align*}
is MTP$_2$.
\end{theorem}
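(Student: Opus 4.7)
My plan is to deduce Part (1) from Part (2): taking all $f_i \equiv 1$ (each constant function is continuous and bounded), the function $p^{(k)}$ of Part (2) becomes the marginal density $(x_1,\ldots,x_k) \mapsto \int p(x_1,\ldots,x_n)\,dx_{k+1}\cdots dx_n$ of $(X_1,\ldots,X_k)$, so its MTP$_2$ property is exactly Part (1). I therefore focus on Part (2), and split it into two elementary preservation principles. First, multiplication by a univariate non-negative factor preserves MTP$_2$: if $q:\mathbb{R}^n\to\mathbb{R}_+$ is MTP$_2$ and $f:\mathbb{R}\to\mathbb{R}_+$, then $\mathbf{x}\mapsto q(\mathbf{x})\,f(x_i)$ is MTP$_2$, since on the $i$-th coordinate both sides of the inequality pick up the same factor $f(x_i)\,f(y_i)=f(\min(x_i,y_i))\,f(\max(x_i,y_i))$. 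Applying this $n$ times yields that $p(\mathbf{x})\prod_{i=1}^n f_i(x_i)$ is MTP$_2$. Second, integration against Lebesgue measure in a single variable preserves MTP$_2$: if $h:\mathbb{R}^m\to\mathbb{R}_+$ is MTP$_2$ and the integral is finite, then $\tilde h(x_1,\ldots,x_{m-1}) := \int h(x_1,\ldots,x_m)\,dx_m$ is MTP$_2$ on $\mathbb{R}^{m-1}$. Iterating this $n-k$ times produces $p^{(k)}$.

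The heart of the argument is this integration step. I would use the classical fact that MTP$_2$ on a product of chains is equivalent to pairwise TP$_2$ in each pair of coordinates with all others held fixed (already built into Definition~\ref{defi:MTP2}, since $\mathbf{x}\wedge\mathbf{y}$ and $\mathbf{x}\vee\mathbf{y}$ only differ from $\mathbf{x},\mathbf{y}$ on coordinates where $\mathbf{x}$ and $\mathbf{y}$ disagree). Verifying MTP$_2$ of $\tilde h$ thus reduces, after fixing all other variables, to the three-variable assertion: if $h:\mathbb{R}^3\to\mathbb{R}_+$ is MTP$_2$, then $H(x,y):=\int h(x,y,z)\,dz$ is TP$_2$. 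For this I symmetrize in $z$: for $x_1<x_2$ and $y_1<y_2$, after renaming the dummy variable in the cross-term $H(x_1,y_2)H(x_2,y_1)$ and restricting to $z_1<z_2$,
\[
H(x_1,y_1)H(x_2,y_2) - H(x_1,y_2)H(x_2,y_1) = \iint_{z_1<z_2} \bigl[P+Q-R-S\bigr]\,dz_1\,dz_2,
\]
where $P=h(x_1,y_1,z_1)h(x_2,y_2,z_2)$, $Q=h(x_1,y_1,z_2)h(x_2,y_2,z_1)$, $R=h(x_1,y_2,z_1)h(x_2,y_1,z_2)$, $S=h(x_1,y_2,z_2)h(x_2,y_1,z_1)$ are the four pair-products of values at the eight corners of the box $\{x_1,x_2\}\times\{y_1,y_2\}\times\{z_1,z_2\}$. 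So the three-variable case reduces to the pointwise inequality $P+Q\geq R+S$ on $z_1<z_2$.

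The main obstacle is precisely this eight-value inequality, which must be extracted from the six pairwise TP$_2$ inequalities that MTP$_2$ provides on the six $2\times 2$ faces of the box. Multiplying compatible pairs of these yields the three product inequalities
\[
PQ \geq RS, \qquad PR \geq QS, \qquad PS \geq QR;
\]
for instance, $PR\geq QS$ comes from multiplying the TP$_2$ inequalities on the two $(x,z)$-faces at $y=y_1$ and $y=y_2$, and analogously for the others. Writing $P/Q=\alpha^2$, $\sqrt{PQ}=m$, and (WLOG $R\geq S$) $R/S=\beta^2$, $\sqrt{RS}=n$, these three inequalities translate to $m\geq n$ and $\alpha\geq\beta\geq 1$; since $t\mapsto t+1/t$ is increasing on $[1,\infty)$, one concludes
\[
P+Q = m\left(\alpha+\tfrac{1}{\alpha}\right) \geq n\left(\beta+\tfrac{1}{\beta}\right) = R+S,
\]
which closes the argument. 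A more conceptual alternative is to invoke the Ahlswede--Daykin four-functions inequality on the two-point lattice $\{z_1,z_2\}$, directly yielding the sum inequality; but the symmetrization route above stays within the framework of Section~2 and parallels the composition formula of Lemma~\ref{lem:ConvProduct}.
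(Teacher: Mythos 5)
The paper does not prove this statement: it is cited from Karlin--Rinot \cite{KR} and used as a black box, so there is no internal proof to compare against. Judged on its own terms, your global strategy is the standard one for closure of MTP$_2$ under marginalization: Part~(1) from Part~(2) via $f_i\equiv 1$; multiplication by univariate non-negative factors is trivially MTP$_2$-preserving; and the core is that integrating out one coordinate preserves MTP$_2$, proved by symmetrizing over $z_1<z_2$ and establishing the pointwise eight-value inequality $P+Q\geq R+S$. Your derivation of the three product inequalities $PQ\geq RS$, $PR\geq QS$, $PS\geq QR$ by pairing the six face-inequalities is correct, and the elementary finish via $P=m\alpha$, $Q=m/\alpha$, $R=n\beta$, $S=n/\beta$ with the monotonicity of $t\mapsto t+1/t$ on $[1,\infty)$ is a nice way to close the computation.

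There are two genuine gaps as written. First, the claim that the equivalence ``MTP$_2$ iff pairwise TP$_2$ with the other variables frozen'' is ``already built into Definition~\ref{defi:MTP2}'' is false: your parenthetical remark only gives the trivial \emph{necessity} direction. The \emph{sufficiency} direction is a real theorem (Kemperman's supermodularity lemma), and it requires strict positivity. You use sufficiency when you claim that TP$_2$ of the three-variable $H(x,y)=\int h(x,y,z)\,dz$ yields full MTP$_2$ of $\tilde h$. Fortunately, this detour is unnecessary: for arbitrary $\mathbf{x},\mathbf{y}\in\R^{m-1}$, put $\mathbf{a}=\mathbf{x}\wedge\mathbf{y}$, $\mathbf{b}=\mathbf{x}\vee\mathbf{y}$, symmetrize over $z_1<z_2$, and observe that the pointwise inequality
$h(\mathbf{a},z_1)h(\mathbf{b},z_2)+h(\mathbf{a},z_2)h(\mathbf{b},z_1)\geq h(\mathbf{x},z_1)h(\mathbf{y},z_2)+h(\mathbf{x},z_2)h(\mathbf{y},z_1)$
involves $h$ only at the eight points of $\{\mathbf{a},\mathbf{x},\mathbf{y},\mathbf{b}\}\times\{z_1,z_2\}$; each of your six face-inequalities is a direct instance of MTP$_2$ of $h$ on those points (e.g.\ $h(\mathbf{a},z_1)h(\mathbf{x},z_2)\geq h(\mathbf{a},z_2)h(\mathbf{x},z_1)$ because $\mathbf{a}\leq\mathbf{x}$ and $z_1<z_2$). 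So the reduction to three literal variables can simply be bypassed, and with it Kemperman's theorem. Second, the $\alpha,\beta,m,n$ algebra tacitly assumes $P,Q,R,S>0$; the cases where some of them vanish must be treated separately (easy, using the ``diagonal'' MTP$_2$ instances $P\geq Q$, $P\geq R$, $P\geq S$) or eliminated once and for all by a regularization making $p$ strictly positive, in the spirit of what the paper does at the start of the proof of Theorem~\ref{theo:SCMrvMTP2}. The Ahlswede--Daykin alternative you mention would avoid both issues and is the cleanest packaging of the same computation.
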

As a consequence of Theorem \ref{theo:MTP2Density}, we have:
\begin{corol}\label{corol:ConvolMTP2}(Karlin-Rinot \cite{KR}).\\ 
Let $l,m,n\in\N^{\ast}$. Let $p:\R^l\times\R^m\to\R_+$ and $q:\R^m\times\R^n\to\R_+$ be two MTP$_2$ densities. Then the function $r:\R^l\times\R^n$ defined by:
$$
\forall\,(x,z)\in\R^l\times\R^n,\text{ }r(x,z)=\int_{\R^m}p(x,y)q(y,z)dy
$$ 
is MTP$_2$.
\end{corol}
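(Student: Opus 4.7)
The plan is to realise $r$ as a marginal of a single MTP$_2$ function on $\R^{l+m+n}$. Concretely, I would introduce
$$
P(x,y,z) := p(x,y)\,q(y,z) \qquad (x\in\R^l,\ y\in\R^m,\ z\in\R^n),
$$
first verify that $P$ is MTP$_2$ on $\R^{l+m+n}$, and then apply Theorem~\ref{theo:MTP2Density}(2), after permuting the coordinates via Remark~\ref{rem:MTP2} so that the $y$-block occupies the last $m$ positions, with all weights $f_i\equiv 1$. Integrating out the $y$-block then produces $r(x,z)=\int_{\R^m}P(x,y,z)\,dy$, and the theorem yields the MTP$_2$ property for $r$.

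The MTP$_2$ inequality for $P$ is immediate by block-factorisation: the coordinatewise lattice operations on $\R^{l+m+n}$ respect the blocks, i.e.\ $(x,y,z)\wedge(x',y',z')=(x\wedge x',\,y\wedge y',\,z\wedge z')$ and similarly for $\vee$. Thus
$$
P\bigl((x,y,z)\wedge(x',y',z')\bigr)\,P\bigl((x,y,z)\vee(x',y',z')\bigr)=\bigl[p(x\wedge x',y\wedge y')\,p(x\vee x',y\vee y')\bigr]\cdot\bigl[q(y\wedge y',z\wedge z')\,q(y\vee y',z\vee z')\bigr].
$$
Each bracket is bounded below, respectively by $p(x,y)p(x',y')$ and by $q(y,z)q(y',z')$, using the MTP$_2$ assumption on $p$ (viewed on $\R^{l+m}$) and on $q$ (viewed on $\R^{m+n}$). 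Multiplying the two inequalities gives $P(x,y,z)\,P(x',y',z')$, which is the MTP$_2$ inequality for $P$.

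The only delicate point — and the main obstacle I anticipate — is that Theorem~\ref{theo:MTP2Density} is formally stated for an MTP$_2$ \emph{density}, whereas $P$ need not have unit total mass: one has $\int_{\R^{l+m+n}} P = \int_{\R^m} p_Y(y)\,q_Y(y)\,dy$, where $p_Y$ and $q_Y$ are the $y$-marginals of $p$ and $q$. This is finite, since the statement already presumes $r$ is a well-defined $\R_+$-valued function, but it is not $1$ in general. Since the MTP$_2$ property is invariant under multiplication by a positive constant, one disposes of this wrinkle either by renormalising $P$ by its total mass before invoking the theorem, or by noting that the proof of Theorem~\ref{theo:MTP2Density}(2) uses only the MTP$_2$ property of the integrand and not its normalisation. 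With this minor adjustment, the theorem applies and delivers the conclusion.
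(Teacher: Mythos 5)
Your argument is correct and is precisely the derivation the paper has in mind: it introduces no proof of its own beyond pointing at Theorem~\ref{theo:MTP2Density}, and your chain (closure of MTP$_2$ under products, hence $P(x,y,z)=p(x,y)q(y,z)$ is MTP$_2$; permutation invariance via Remark~\ref{rem:MTP2}; integrate out the $y$-block with $f_i\equiv1$) is the standard way to make that implication concrete. One small inaccuracy: pointwise finiteness of $r(x,z)=\int p(x,y)q(y,z)\,dy$ does \emph{not} by itself guarantee $\int_{\R^{l+m+n}}P<\infty$, since $\int P=\int_{\R^m}p_Y(y)q_Y(y)\,dy$ can diverge even when $r$ is finite everywhere; so "renormalising $P$ by its total mass" is not always available. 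Your second escape route is the right one: the MTP$_2$ conclusion is a local four-point inequality, and the proof of Theorem~\ref{theo:MTP2Density}(2) (marginalisation preserves MTP$_2$ whenever the relevant integrals converge) never uses the unit-mass normalisation, so it applies directly to $P$.
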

\begin{rem}
Corollary \ref{corol:ConvolMTP2} allows to regularize MTP$_2$ densities while preserving MTP$_2$ property.
\end{rem}
Note that MTP$_2$ random vectors satisfy a co-monotony principle. Indeed, Sarkar \cite{Sar} proved 
the following result.
\begin{theorem}\label{theo:MTP2Comonotony}(Sarkar \cite{Sar}).
Let $\mathbf{X}:=(X_1,\cdots,X_n)$ ($n\in\N^{\ast}$) be MTP$_2$, and let $\phi,\varphi:\R^n\to\R$ be two measurable and simultaneously (componentwise) non-decreasing (resp. non-increasing) on $\R^n$. Then
\begin{equation}\label{eq:Comonotony}
\E[\phi(\mathbf{X})\varphi(\mathbf{X})]\geq\E[\phi(\mathbf{X})]\E[\varphi(\mathbf{X})].
\end{equation}
\end{theorem}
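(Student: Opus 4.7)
\medskip

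\noindent\textbf{Proof plan.} The plan is to establish the inequality by induction on the dimension $n$, following the standard FKG-type argument. I shall treat the non-decreasing case; the non-increasing case follows by replacing $\phi,\varphi$ by $-\phi,-\varphi$.

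\medskip

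\noindent\emph{Base case $n=1$.} Here MTP$_2$ is vacuous, and the claim reduces to the classical Chebyshev correlation inequality: for $\phi,\varphi:\R\to\R$ non-decreasing and an independent copy $X'$ of $X$, one has
$$
\bigl(\phi(X)-\phi(X')\bigr)\bigl(\varphi(X)-\varphi(X')\bigr)\geq 0
$$
pointwise, and taking expectations yields $2\E[\phi(X)\varphi(X)]-2\E[\phi(X)]\E[\varphi(X)]\geq 0$, provided the relevant expectations exist (otherwise one truncates by $\phi\wedge N$, $\varphi\wedge N$ and lets $N\to\infty$).

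\medskip

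\noindent\emph{Inductive step.} Assume the statement for dimension $n-1$ and let $\mathbf{X}=(X_1,\dots,X_n)$ be MTP$_2$ with density $p$. Set
$$
\widetilde{\phi}(x_1,\dots,x_{n-1}):=\E\bigl[\phi(X_1,\dots,X_n)\,\bigl|\,X_1=x_1,\dots,X_{n-1}=x_{n-1}\bigr]
$$
and define $\widetilde{\varphi}$ analogously. Conditioning on $(X_1,\dots,X_{n-1})$, the base case applied to the conditional law of $X_n$ gives
$$
\E\bigl[\phi(\mathbf{X})\varphi(\mathbf{X})\,\bigl|\,X_1,\dots,X_{n-1}\bigr]\;\geq\;\widetilde{\phi}(X_1,\dots,X_{n-1})\,\widetilde{\varphi}(X_1,\dots,X_{n-1}).
$$
By Theorem \ref{theo:MTP2Density}(1), the marginal $(X_1,\dots,X_{n-1})$ is MTP$_2$, so provided $\widetilde{\phi}$ and $\widetilde{\varphi}$ are (componentwise) non-decreasing on $\R^{n-1}$, the inductive hypothesis yields
$$
\E\bigl[\widetilde{\phi}(X_1,\dots,X_{n-1})\widetilde{\varphi}(X_1,\dots,X_{n-1})\bigr]\;\geq\;\E[\phi(\mathbf{X})]\,\E[\varphi(\mathbf{X})],
$$
and combining the two inequalities finishes the induction.

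\medskip

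\noindent\emph{Main obstacle: monotonicity of the conditional expectations.} The crux is to prove that $\widetilde{\phi}$ is non-decreasing in each coordinate, and similarly for $\widetilde{\varphi}$. Fix $i<n$ and $x_i<x_i'$, keeping the remaining coordinates $\mathbf{x}_{-i}$ fixed. Using the MTP$_2$ inequality with the two vectors obtained from $\mathbf{X}$ by putting $x_i$ (resp.\ $x_i'$) in the $i$-th slot and $y$ (resp.\ $y'$) in the $n$-th slot, one checks that the joint density of $(X_i,X_n)$ conditional on $\mathbf{X}_{-\{i,n\}}$ is TP$_2$ in $(x_i,y)$. A standard consequence of TP$_2$ of a joint density is that the conditional distribution of $X_n$ given $X_i=x_i$ is stochastically non-decreasing in $x_i$; hence
$$
\int\phi(x_i',\mathbf{x}_{-i},y)\,p(y\mid x_i,\mathbf{x}_{-i})\,dy\;\leq\;\int\phi(x_i',\mathbf{x}_{-i},y)\,p(y\mid x_i',\mathbf{x}_{-i})\,dy
$$
(since $y\mapsto \phi(x_i',\mathbf{x}_{-i},y)$ is non-decreasing). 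Combined with the pointwise estimate $\phi(x_i,\mathbf{x}_{-i},y)\leq\phi(x_i',\mathbf{x}_{-i},y)$, this gives $\widetilde{\phi}(x_i,\mathbf{x}_{-i})\leq\widetilde{\phi}(x_i',\mathbf{x}_{-i})$. I expect this stochastic-monotonicity step, and the measure-theoretic care needed to handle the conditional densities when the marginal of $(X_1,\dots,X_{n-1})$ vanishes, to be the only delicate point; the rest of the argument is a direct application of Fubini, Theorem \ref{theo:MTP2Density} and the induction hypothesis.
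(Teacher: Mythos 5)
The paper does not prove this theorem; it is stated and attributed to Sarkar \cite{Sar}, so there is no internal argument to compare with, and I evaluate your proposal on its own. Your inductive scheme is the classical FKG-type argument and is correct in its essentials: the base case $n=1$ is the one-dimensional Chebyshev correlation inequality (MTP$_2$ is indeed vacuous there); the inductive step conditions on $(X_1,\dots,X_{n-1})$, uses the base case conditionally to get $\E[\phi\varphi\mid X_1,\dots,X_{n-1}]\geq\widetilde{\phi}\,\widetilde{\varphi}$, invokes Theorem~\ref{theo:MTP2Density}(1) for the MTP$_2$ property of the marginal, and then needs $\widetilde{\phi},\widetilde{\varphi}$ to remain componentwise non-decreasing. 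Your argument for that key step is the right one: with all coordinates except $x_i$ and $x_n$ frozen, the MTP$_2$ inequality applied to the pair $(x_i,x_n)$ says the joint density is TP$_2$ in those two variables; dividing by the ($x_i$-dependent, $y$-independent) normalization keeps the conditional density $p(y\mid x_i,\mathbf{x}_{-i})$ TP$_2$ in $(x_i,y)$, which is MLR ordering, which gives stochastic monotonicity; and the two-step chain $\widetilde{\phi}(x_i,\mathbf{x}_{-i})\leq\int\phi(x_i',\mathbf{x}_{-i},y)p(y\mid x_i,\mathbf{x}_{-i})dy\leq\widetilde{\phi}(x_i',\mathbf{x}_{-i})$ is valid. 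The measure-theoretic caveat you flag — well-definedness of the conditional densities where the marginal of $(X_1,\dots,X_{n-1})$ vanishes — is real but routine: regularizing $p$ to a strictly positive continuous MTP$_2$ density (exactly the device the paper uses at the start of the proof of Theorem~\ref{theo:SCMrvMTP2}, justified by Corollary~\ref{corol:ConvolMTP2}) and passing to the limit disposes of it. The reduction of the non-increasing case by replacing $(\phi,\varphi)$ with $(-\phi,-\varphi)$ is also correct. So the plan is sound and complete modulo that standard regularization.
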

Recently, Pag\`es \cite{Pa} introduced a functional co-monotony principle for stochastic processes $\mathbf{X}:=(X_t,t\geq0)$. He offered an extension of Inequality (\ref{eq:Comonotony}) by replacing $\phi$ and $\varphi$ with functionals of the hole path of $\mathbf{X}$. The author also provided many examples of co-monotone stochastic processes.\\
Here are some basic examples of MTP$_2$ distributions (see Karlin-Rinot \cite{KR}).
\begin{exa}(Karlin-Rinot \cite{KR}, Section 3).
\item[1)]If $X_1,\cdots,X_n$ is a sample of i.i.d. random variables, each $X_i$ having a density, then the joint density of the order statistics $X_{(1)},\cdots,X_{(n)}$ is MTP$_2$. 
\item[2)]A Gaussian random vector $(X_1,\cdots,X_n)$ with an invertible covariance matrix $\Sigma$ is MTP$_2$ if and only if the inverse matrix $\Sigma^{-1}$ of $\Sigma$ has negative off-diagonal elements.
\item[3)]Let $(X_1,\cdots,X_n)$ be a gaussian random vector with zero mean, and with an invertible covariance matrix $\Sigma$. Let $\Sigma^{-1}$ denote the inverse matrix of $\Sigma$. Then $(|X_1|,\cdots,|X_n|)$ is MTP$_2$ if and only if there exists a diagonal matrix $D$ with elements $\pm1$ such that $D\Sigma^{-1}D$ has negative off-diagonal elements.
\item[4)]Let $(X_t,t\geq0)$ be a Markov process with absolutely continuous and TP$_2$ transition kernel. Then, for every distinct elements $t_1,\cdots,t_n$ in $\R_+$, $(X_{t_1},\cdots,X_{t_n})$ is MTP$_2$. 
\end{exa}
Further examples of MTP2 distributions may be found in Karlin-Rinot \cite{KR} and Gupta-Richards \cite{GR}.
 
\section{Strong conditional monotonicity}
We introduce the notion of strong conditional monotonicity which strictly implies conditional monotonicity as defined in \cite{BPR1} and \cite{HPRY}. 
\begin{defi}\label{defi:MathcalIn}
For every $n\in\N^{\ast}$,  let $\mathcal{I}_n$ denotes the set of continuous and bounded functions $\phi:\R^n\to\R$ which are componentwise non-decreasing.
\end{defi}
\begin{defi}\label{defi:SCM}(Strong conditional monotonicity).
\item[1)]A random vector $(X_1,\cdots,X_n)$ with real components is said to be strongly conditionally monotone (SCM) if, for every $i\in\{1,\cdots,n\}$, every continuous and strictly positive functions $f_k:\R\to\R_+$, $k=1,\cdots,n$ such that:
\begin{equation}\label{eq:SCMdefIntFk}
\E\left[\prod\limits_{k=1}^nf_k(X_k)\right]<\infty,
\end{equation}
and every $\phi\in\mathcal{I}_n$, we have:
\begin{equation}\label{eq:SCM}\tag{SCM}
\left.
\begin{array}{ll}
&z\in \R\longmapsto K_{i}(n,z):=\dfrac{\E\left[\left.\phi(X_{1},\cdots,X_{n}) \prod\limits_{k=1}^nf_k(X_{k})\right|X_{i}=z\right]}{\E\left[\left. \prod\limits_{k=1}^nf_k(X_{k})\right|X_{i}=z\right]}
\\& \\
&\text{ is a non-decreasing function. }
\end{array}
\right\}
\end{equation}
\item[2)]A real valued process $(X_{\lambda},\lambda\geq0)$ is said to be strongly conditionally monotone (SCM) if its finite-dimensional marginals are SCM. 
\end{defi}
\begin{rem}\label{rem:SCM}
\item[1)]If there are subsets $I_1,\cdots,I_n$ of $\R$ such that, for every $k$, $X_k$ takes values in $I_k$, we may suppose in Definition \ref{defi:SCM} that $\phi$ is defined on $I_1\times\cdots\times I_k$, $f_k$ is defined on $I_k$, and $z\longmapsto K_i(n,z)$ is defined on $I_i$.
\item[2)]Let $(X_{\lambda},\lambda\geq0)$ be a real valued process, and let $\theta:\R_+\times\R\to\R$ be such that, for every $\lambda\geq0$, $x\longmapsto\theta(\lambda,x)$ is continuous and strictly increasing (resp. strictly decreasing). If $(X_{\lambda},\lambda\geq0)$ is SCM, then so is $(\theta(\lambda, X_{\lambda}),\lambda\geq0)$. 
\item[3)]Let $X$ be a real valued r.v. and let $\alpha:\R_+\to\R^{\ast}_+$ be non-decreasing. Then $(\alpha(\lambda)X,\lambda\geq0)$ is SCM.
\item[4)]Note that (\ref{eq:SCM}) implies the conditional monotonicity as defined in \cite{HPRY} and \cite{BPR1}. Indeed, if we take $f_k=1$ for every $k\in\N^{\ast}$, we recover the conditional monotonicity hypothesis. The converse is not true. For example, the Gamma subordinator is conditionally monotone (see \cite{BPR1}, Section 2 or \cite{HPRY}, Section 1.4), but not strongly conditionally monotone. 
\end{rem}
We exhibit an important class of SCM random vectors. Indeed, we prove that MTP$_2$ random vectors are SCM. This result extends Theorem 3.43 of \cite{Bo} to the non-Markovian case. 
\begin{theorem}\label{theo:SCMrvMTP2}
Every MTP$_2$ random vector is SCM.
\end{theorem}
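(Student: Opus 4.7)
The plan is to use closure properties of MTP$_2$ under pointwise multiplication to absorb the weights $f_k$ into the density, then to deduce the SCM inequality from a stochastic monotonicity property of conditional distributions of MTP$_2$ densities, which in turn follows from a monotone likelihood ratio argument combined with Sarkar's inequality (Theorem \ref{theo:MTP2Comonotony}).

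Let $p$ be the MTP$_2$ density of $(X_1,\ldots,X_n)$ and fix strictly positive continuous weights $f_1,\ldots,f_n$ satisfying (\ref{eq:SCMdefIntFk}). The first step is to introduce the tilted density
$$
\tilde p(\mathbf{x}):=\frac{1}{Z}\prod_{k=1}^n f_k(x_k)\,p(\mathbf{x}),\qquad Z=\E\Bigl[\prod_{k=1}^n f_k(X_k)\Bigr].
$$
This $\tilde p$ is MTP$_2$: the product $\mathbf{x}\mapsto\prod_k f_k(x_k)$ satisfies (\ref{eq:MTP2}) with equality, and the pointwise product of two MTP$_2$ functions is MTP$_2$. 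Writing $\tilde\E$ for the expectation under $\tilde p$, the quantity $K_i(n,z)$ in (\ref{eq:SCM}) equals $\tilde\E[\phi(X_1,\ldots,X_n)\mid X_i=z]$, so the theorem reduces to the statement that, for any MTP$_2$ density $\tilde p$ on $\R^n$ and every $\phi\in\mathcal{I}_n$, the map $z\mapsto \tilde\E[\phi(\mathbf{X})\mid X_i=z]$ is non-decreasing. Splitting the $z$-dependence of $\phi$ in its $i$-th coordinate (which is non-decreasing) from the $z$-dependence of the conditioning, it suffices to prove, for every $\psi_0\in\mathcal{I}_{n-1}$ and every $z<z'$,
$$
\tilde\E[\psi_0(X_{-i})\mid X_i=z]\ \leq\ \tilde\E[\psi_0(X_{-i})\mid X_i=z'].
$$

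To establish this, let $q_z(\mathbf{x}_{-i}):=\tilde p(\mathbf{x}_{-i},z)/\tilde p_i(z)$ denote the conditional density of $(X_k)_{k\ne i}$ given $X_i=z$; it is MTP$_2$ on $\R^{n-1}$ since freezing a coordinate preserves MTP$_2$. Applying (\ref{eq:MTP2}) to the pair $(\mathbf{x}_{-i},z')$ and $(\mathbf{y}_{-i},z)$ with $\mathbf{x}_{-i}\leq \mathbf{y}_{-i}$, whose meet is $(\mathbf{x}_{-i},z)$ and whose join is $(\mathbf{y}_{-i},z')$, gives the monotone likelihood ratio inequality
$$
\tilde p(\mathbf{x}_{-i},z)\,\tilde p(\mathbf{y}_{-i},z')\geq \tilde p(\mathbf{x}_{-i},z')\,\tilde p(\mathbf{y}_{-i},z),
$$
so the ratio $h(\mathbf{x}_{-i}):=q_{z'}(\mathbf{x}_{-i})/q_z(\mathbf{x}_{-i})$ is componentwise non-decreasing. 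Since $\int h\,q_z\,d\mathbf{x}_{-i}=\int q_{z'}\,d\mathbf{x}_{-i}=1$, Sarkar's inequality applied under the MTP$_2$ density $q_z$ to the two componentwise non-decreasing functions $\psi_0$ and $h$ yields
$$
\int \psi_0\,q_{z'}\,d\mathbf{x}_{-i}=\int \psi_0\,h\,q_z\,d\mathbf{x}_{-i}\ \geq\ \int \psi_0\,q_z\,d\mathbf{x}_{-i}\cdot\int h\,q_z\,d\mathbf{x}_{-i}=\int \psi_0\,q_z\,d\mathbf{x}_{-i},
$$
which is precisely the required monotonicity.

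The main technical obstacle I anticipate is regularity: defining the ratio $h=q_{z'}/q_z$ and applying Sarkar's inequality cleanly require $q_z$ to be strictly positive on the support of $q_{z'}$ and the relevant functions to be genuinely integrable. Both issues can be handled by an MTP$_2$-preserving regularisation, for instance by replacing $\tilde p$ by its product with a product Gaussian mollifier $\prod_k\rho_\varepsilon(x_k)$ (trivially an MTP$_2$ multiplier, so MTP$_2$ is preserved) and letting $\varepsilon\to 0$ once the inequality has been established; this is exactly the type of smoothing discussed after Proposition \ref{prop:ConvProduct}.
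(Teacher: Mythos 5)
Your proof is correct in its main structure and takes a genuinely different route from the paper's. The paper proceeds by induction on the number of conditioned coordinates: the base case $l=2$ reduces the claim to the one-dimensional statement that $F_{\mathbf z}^{-1}(u)$ is componentwise non-decreasing in $\mathbf z$, while for $l>2$ the inductive hypothesis is combined with the Karlin--Rinot closure property (Theorem \ref{theo:MTP2Density}) to pass to a two-variable problem with a modified pair $(\widehat\phi,\widehat p)$. You instead absorb the weights $f_k$ by tilting the density (valid, since $\prod_k f_k(x_k)$ is an MTP$_2$ multiplier), read off a monotone-likelihood-ratio inequality for $q_{z'}/q_z$ directly from (\ref{eq:MTP2}), and then apply Sarkar's association inequality (Theorem \ref{theo:MTP2Comonotony}) under the MTP$_2$ conditional law $q_z$ to the two componentwise non-decreasing functions $\psi_0$ and $h$. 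This is shorter and more conceptual, and it actually uses the multivariate FKG-type inequality that the paper states but never invokes in its own proof; the trade-off is that Sarkar's theorem is imported as a black box, whereas the paper's induction is elementary and self-contained. One correction is needed on the regularity step: multiplying $\tilde p$ by a product Gaussian $\prod_k\rho_\varepsilon(x_k)$ preserves MTP$_2$, but it makes the density neither strictly positive nor continuous, and leaves the ratio $q_{z'}/q_z$ essentially unchanged after renormalisation, so it resolves neither of the two issues you flag. What works is \emph{convolution} of $\tilde p$ with a product of univariate Gaussian kernels, which preserves MTP$_2$ by Corollary \ref{corol:ConvolMTP2} and yields a strictly positive smooth density; this is what the paper means by ``by regularisation'' at the start of its proof.
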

\begin{proof}
Let $(X_1,\cdots,X_n)$ ($n\geq2$) be a MTP$_2$ random vector, and let $p:\R^n\to\R_+$ denote its density. By regularisation, we may assume (without loss of generality) that $p$ is continuous and strictly positive. Since MTP$_2$ property is invariant under permutations, it suffices to prove (\ref{eq:SCM}) for $i=n$.\\
Let $f_k:\R\to\R_+^{\ast}$, $k=1,\cdots,n$ be continuous and strictly positive functions satisfying (\ref{eq:SCMdefIntFk}). By truncature, we may suppose (without loss of generality) that all $f_k$ are bounded.\\
We shall prove by induction that, for every $l\in\{2,\cdots,n\}$, and every $\phi\in\mathcal{I}_l$,
\begin{equation}\label{eq:SCMInductEl}\tag{E$_l$}
\left.
\begin{array}{ll}
(z_l,\cdots,z_n)\longmapsto \dfrac{\E\left[\left.\phi(X_{1},\cdots,X_{l}) \prod\limits_{k=1}^lf_k(X_{k})\right|X_{l}=z_l,\cdots,X_n=z_n\right]}{\E\left[\left. \prod\limits_{k=1}^lf_k(X_{k})\right|X_{l}=z_l,\cdots,X_n=z_n\right]}\\ \\
\text{is componentwise non-decreasing.}
\end{array}
\right\}
\end{equation}
Observe that if $l=n$, we  recover (\ref{eq:SCM}) with $i=n$.
\item[$\bullet$]\textbf{Case $\mathbf{l=2}$. }For every $\phi\in\mathcal{I}_2$ and every $\mathbf{z}=(z_2,\cdots,z_n)\in\R^{n-1}$, we define:
\begin{align}
\mathcal{K}(2,\mathbf{z})&=\frac{\E\left[\phi(X_1,X_2)f_1(X_1)f_2(X_2)|X_2=z_2,\cdots,X_n=z_n\right]}{\E\left[f_1(X_1)f_2(X_2)|X_2=z_2,\cdots,X_n=z_n\right]}\nonumber\\
&=\frac{\displaystyle\int_{-\infty}^{\infty}\phi(x,z_2)f(x)p(x,\mathbf{z})dx}{\displaystyle\int_{-\infty}^{\infty}f(x)p(x,\mathbf{z})dx}\label{eq:defiMathcalK2}.
\end{align}
We set
$$
F_{\mathbf{z}}(x):=\frac{\displaystyle\int_{-\infty}^{x}f(y)p(y,\mathbf{z})dy}{\displaystyle\int_{-\infty}^{\infty}f(y)p(y,\mathbf{z})dy},
$$ 
so that (\ref{eq:defiMathcalK2}) may be written:
$$
\mathcal{K}(2,\mathbf{z})=\int_{-\infty}^{\infty}\phi(x,z_2)dF_{\mathbf{z}}(x)=\int_{0}^1\phi\left(F^{-1}_{\mathbf{z}}(u),z_2\right)du,
$$
since $F_{\mathbf{z}}$ is continuous and strictly increasing.\\
Therefore, for every $u\in[0,1]$, it suffices to show that 
\begin{equation}\label{eq:InverseFz}
\mathbf{z}\longmapsto F^{-1}_{\mathbf{z}}(u)\quad\text{is componentwise non-decreasing.}
\end{equation} 
But (\ref{eq:InverseFz}) holds as soon as, for every $x\in\R$, $\mathbf{z}\longmapsto F_{\mathbf{z}}(x)$ is non-increasing with respect to each argument. Indeed, for every $\mathbf{z}=(z_2,\cdots,z_n)$, $\mathbf{z}'=(z'_2,\cdots,z'_n)$ in $\R^{n-1}$ such that $\mathbf{z}\leq \mathbf{z}'$ (i.e. $z_i\leq z'_i$ for every $i=2,\cdots,n$), and for every $u\in[0,1]$,  
\begin{align*}
F_{\mathbf{z}'}\left(F_{\mathbf{z}}^{-1}(u)\right)&\leq F_{\mathbf{z}}\left(F_{\mathbf{z}}^{-1}(u)\right)\quad\text{(since }\mathbf{z}\mapsto F_{\mathbf{z}}(x)
\text{ is componentwise non-increasing})\\
&=u=F_{\mathbf{z}'}\left(F_{\mathbf{z}'}^{-1}(u)\right).
\end{align*}
Since  $p$ is MTP$_2$, then, for every $\mathbf{z}\leq \mathbf{z}'$,
$$
p(y,\mathbf{z}')p(x,\mathbf{z})\geq
p(y,\mathbf{z})p(x,\mathbf{z}')\quad\text{if }y\geq x,
$$ 
and
$$
p(y,\mathbf{z})p(x,\mathbf{z}')\geq
p(y,\mathbf{z}')p(x,\mathbf{z})\quad\text{if }y\leq x.
$$ 
Then, for every $\mathbf{z}\leq \mathbf{z}'$ in $\R^{n-1}$,
\begin{align*}
\frac{1}{F_{\mathbf{z}}(x)}&=1+\frac{\displaystyle\int^{\infty}_{x}f(y)p(y,\mathbf{z})dy}{\displaystyle\int_{-\infty}^{x}f(y)p(y,\mathbf{z})dy}
=1+\frac{\displaystyle\int^{\infty}_{x}f(y)p(y,\mathbf{z})p(x,\mathbf{z}')dy}{\displaystyle\int_{-\infty}^{x}f(y)p(y,\mathbf{z})p(x,\mathbf{z}')dy}\\
&\leq 1+\frac{\displaystyle\int^{\infty}_{x}f(y)p(y,\mathbf{z}')p(x,\mathbf{z})dy}{\displaystyle\int_{-\infty}^{x}f(y)p(y,\mathbf{z}')p(x,\mathbf{z})dy}
=1+\frac{\displaystyle\int^{\infty}_{x}f(y)p(y,\mathbf{z}')dy}{\displaystyle\int_{-\infty}^{x}f(y)p(y,\mathbf{z}')dy}:=\frac{1}{F_{\mathbf{z}'}(x)};
\end{align*}
which proves that, for every $x\in\R$, $\mathbf{z}\longmapsto F_{\mathbf{z}}(x)$ is componentwise non-increasing, and then (\ref{eq:InverseFz}) holds.
\item[$\bullet$]\textbf{Case $\mathbf{l>2}$. }Suppose that (E$_{l-1}$) holds for every function in $\mathcal{I}_{l-1}$. Let us prove (E$_l$) for every fixed $\phi\in\mathcal{I}_l$.\\
For every $\mathbf{z}=(z_l,\cdots,z_n)\in\R^{n-l+1}$, we define:
\begin{align*}
\mathcal{K}(l,\mathbf{z})&=\frac{\E\left[\left.\phi(X_1,\cdots,X_l)\prod\limits_{k=1}^lf_k(X_k)\right|X_l=z_l,\cdots,X_n=z_n\right]}{\E\left[\left. \prod\limits_{k=1}^lf_k(X_k)\right|X_l=z_l,\cdots,X_n=z_n\right]}\\
&=\frac{\displaystyle\int_{-\infty}^{\infty}\left(\displaystyle\int_{\R^{l-2}} \phi(\overline{x},x_{l-1},z_l)\prod\limits_{k=1}^{l-1}f_k(x_k)p(\overline{x},x_{l-1},\mathbf{z})d\overline{x}\right)dx_{l-1}}{\displaystyle\int_{-\infty}^{\infty}\left(\displaystyle\int_{\R^{l-2}}  \prod\limits_{k=1}^{l-1}f_k(x_k)p(\overline{x},x_{l-1},\mathbf{z})d\overline{x}\right)dx_{l-1}},
\end{align*}
where $\overline{x}=(x_1,\cdots,x_{l-2})$ and $d\overline{x}=dx_1\cdots dx_{l-2}$.\\
Now, set $\mathbf{X}=(X_l,\cdots,X_n)$, and consider the functions $\widehat{\phi}:\R^{n-l+2}\to\R$ and $\widehat{p}:\R^{n-l+2}\to\R_+^{\ast}$ defined respectively by:
\begin{align*}
\widehat{\phi}(u,\mathbf{z})&=\frac{\displaystyle\int_{\R^{l-2}} \phi(\overline{x},u,z_l)\prod\limits_{k=1}^{l-2}f_k(x_k)p(\overline{x},u,\mathbf{z})d\overline{x}}{\displaystyle\int_{\R^{l-2}}  \prod\limits_{k=1}^{l-2}f_k(x_k)p(\overline{x},u,\mathbf{z})d\overline{x}}\\
&=\frac{\E\left[\left.\phi(X_1,\cdots,X_{l-1},z_l)\prod\limits_{k=1}^{l-1}f_k(X_k)\right|X_{l-1}=u,\mathbf{X}=\mathbf{z}\right]}{\E\left[\left. \prod\limits_{k=1}^{l-1}f_k(X_k)\right|X_{l-1}=u,\mathbf{X}=\mathbf{z}\right]},
\end{align*} 
and
$$
\widehat{p}(u,\mathbf{z})=\int_{\R^{l-2}}\prod\limits_{k=1}^{l-2}f_k(x_k)p(\overline{x},u,\mathbf{z})d\overline{x}.
$$
By the induction hypothesis (E$_{l-1}$), $\widehat{\phi}$ belongs to $\mathcal{I}_{n-l+2}$, and, using Point 2) of Theorem \ref{theo:MTP2Density}, $\widehat{p}$ satisfies (\ref{eq:MTP2}). Moreover,
$$
\mathcal{K}(l,\mathbf{z})=\frac{\displaystyle\int_{-\infty}^{\infty}\widehat{\phi}(x_{l-1},\mathbf{z})f_{l-1}(x_{l-1})\widehat{p}(x_{l-1},\mathbf{z})dx_{l-1}}{\displaystyle\int_{-\infty}^{\infty} f_{l-1}(x_{l-1})\widehat{p}(x_{l-1},\mathbf{z})dx_{l-1}}.
$$
Since $\widehat{p}$ is MTP$_2$, then, using the same computations as in the Case $l=2$, we show that, for every $y\in\R$,
$$
\mathbf{z}\longmapsto\frac{\displaystyle\int_y^{\infty}f_{l-1}(x_{l-1})\widehat{p}(x_{l-1},\mathbf{z})dx_{l-1}}{\displaystyle\int_{-\infty}^yf_{l-1}(x_{l-1})\widehat{p}(x_{l-1},\mathbf{z})dx_{l-1}}
\,\text{}
$$
is non-decreasing with respect to each argument; which yields that $\mathbf{z}\longmapsto\mathcal{K}(l,\mathbf{z})$ is also non-decreasing with respect to each argument.
\end{proof}
\begin{rem}\label{rem:SCMequivMTP2}
We failed to find a SCM process which is not MTP$_2$ due to that SCM and MTP$_2$ properties coincide for several processes. For example, let $(X_1,X_2)$ be a SCM random vector which has the law $\Pb(X_1=i,X_2=j)=p(i,j)$ $(i,j\in\Z)$. Then, $(X_1,X_2)$ is MTP$_2$. To prove this, we may assume without loss of generality that $p$ is strictly positive. Since $(X_1,X_2)$ is SCM, then, for every bounded and strictly positive function $f$, and for every $a\in\Z$,
\[
n\longmapsto\frac{\displaystyle\sum\limits_{k=a}^{+\infty}f(k)p(k,n)}{\displaystyle\sum\limits_{k=-\infty}^{a-1}f(k)p(k,n)}
\,\text{ is non-decreasing,}
\]
i.e. for every $n\leq n'$, every $a\in\Z$ and every bounded and strictly positive function $f:\Z\to\R_+$,
\begin{equation*}
\sum\limits_{k'=a}^{+\infty}f(k')\left(\sum\limits_{k=-\infty}^{a-1}f(k)\,[p(k,n)p(k',n')-p(k,n')p(k',n)]\right)\geq0
\end{equation*} 
which is equivalent to
\begin{equation}\label{eq:SCMequivTP2}
 \sum\limits_{k'=a}^{+\infty}g(k')\left(\sum\limits_{k=-\infty}^{a-1}h(k)\,[p(k,n)p(k',n')-p(k,n')p(k',n)]\right)\geq0
\end{equation} 
for every bounded and strictly positive functions $g:\llbracket a,+\infty\llbracket\to\R_+$ and $h:\rrbracket-\infty,a-1\rrbracket\to\R_+$. But (\ref{eq:SCMequivTP2}) implies
\[
\left.
\begin{array}{c}
\forall\,k'\in\llbracket a,+\infty\llbracket,\,\forall\,h:\rrbracket-\infty,a-1\rrbracket\to\R_+,\\ \\
\qquad\qquad\qquad\sum\limits_{k=-\infty}^{a-1}h(k)\,[p(k,n)p(k',n')-p(k,n')p(k',n)]\geq0
\end{array}
\right\} 
\]
which in turn implies
\[
\forall\,k'\in\llbracket a,+\infty\llbracket,\forall\,k\in\rrbracket-\infty,a-1\rrbracket,\,  p(k,n)p(k',n')-p(k,n')p(k',n)\geq0.
\]
Since $a$, $n$ and $n'$ are arbitrary integers, we deduce that $p$ is TP$_2$. As a consequence, every SCM process taking values in a discrete subset of $\R$ has TP$_2$ bidimensional marginals. In particular, a Markov process with discrete state space is SCM if and only if it is MTP$_2$.
\end{rem}
Here is a direct consequence of Theorem \ref{theo:SCMrvMTP2}.
\begin{corol}\label{corol:SCMMarkovTP2}
Every stochastic process with MTP$_2$ finite-dimensional  marginals is SCM. In particular, if $X:=(X_t,t\geq0)$ is a real valued Markov process such that $X$ has an absolutely continuous and TP$_2$ transition kernel, then $X$ is SCM.
\end{corol}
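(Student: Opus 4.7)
The argument is essentially an assembly of earlier results, so the plan is mostly one of unpacking definitions. First I would invoke the second part of Definition~\ref{defi:SCM}: a real-valued process is SCM precisely when all of its finite-dimensional marginal distributions are SCM as random vectors. Combined with Theorem~\ref{theo:SCMrvMTP2}, which gives the implication $\mathrm{MTP}_2 \Rightarrow \mathrm{SCM}$ at the level of random vectors, this immediately yields the first assertion: as soon as every finite-dimensional law of $X$ is MTP$_2$, it is SCM, and thus $X$ itself is SCM in the sense of Definition~\ref{defi:SCM}(2).

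For the Markov case, the plan is then to verify that such a process automatically possesses MTP$_2$ finite-dimensional marginals. Given $0 \leq t_1 < \cdots < t_n$, the absolute continuity hypothesis implies that $(X_{t_1},\ldots,X_{t_n})$ admits a joint density, which by the Markov property factorises as
\[
p(x_1,\ldots,x_n) \;=\; \mu(x_1)\prod_{k=1}^{n-1} p_{t_k,t_{k+1}}(x_k,x_{k+1}),
\]
where $\mu$ is the density of $X_{t_1}$ and each transition kernel $p_{t_k,t_{k+1}}$ is TP$_2$ by hypothesis. After the customary regularisation (as in the remark following Proposition~\ref{prop:ConvProduct}) which makes $p$ continuous and strictly positive, I would check the lattice inequality~(\ref{eq:MTP2}) for $p$ via the standard pairwise reduction: $p$ satisfies (\ref{eq:MTP2}) if and only if, for every pair $i<j$ of indices, $p$ is TP$_2$ in $(x_i,x_j)$ when the remaining coordinates are held fixed. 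For adjacent indices $j=i+1$, this pairwise TP$_2$ property reduces precisely to the TP$_2$ property of the factor $p_{t_i,t_{i+1}}$. For non-adjacent indices, the dependence on $(x_i,x_j)$ factorises through the intermediate (frozen) variables, so that condition is trivial.

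The main content is already carried by Theorem~\ref{theo:SCMrvMTP2}; the corollary is essentially a packaging statement, and the only mildly delicate point is the pairwise-reduction argument for MTP$_2$ of the Markov joint density. Once that is settled, the second assertion is a direct application of the first.
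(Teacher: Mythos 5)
Your plan for the first assertion is exactly what the paper intends: Definition~\ref{defi:SCM}(2) reduces the statement to the finite-dimensional vectors, and Theorem~\ref{theo:SCMrvMTP2} then gives the implication. No issue there.

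For the Markov case, the paper simply cites the known fact (it appears as point 4 of the Karlin--Rinot example list preceding Section~3) that a Markov process with an absolutely continuous TP$_2$ transition kernel has MTP$_2$ finite-dimensional marginals, and treats the corollary as immediate. You instead supply a proof, which is fine and indeed more informative, but the way you organise it has a small inconsistency. You first regularise $p$ (as in the remark after Proposition~\ref{prop:ConvProduct}) to make it strictly positive, and \emph{then} invoke the pairwise-reduction criterion for MTP$_2$ on the Markov product factorisation. Those two steps are in tension: Gaussian convolution destroys the product structure $p(x_1,\ldots,x_n)=\mu(x_1)\prod_k p_{t_k,t_{k+1}}(x_k,x_{k+1})$, so after regularisation you can no longer read off the pairwise TP$_2$ property from the transition kernels. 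Moreover the pairwise-to-global reduction is itself a theorem requiring strict positivity, so it is not a free move.

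The cleaner route, which avoids both positivity and the pairwise-reduction lemma entirely, is to verify the lattice inequality (\ref{eq:MTP2}) directly on the product form. Since $(\mathbf{x}\wedge\mathbf{y})_k=x_k\wedge y_k$ and $(\mathbf{x}\vee\mathbf{y})_k=x_k\vee y_k$, the inequality factorises term by term: the initial-density factor contributes $\mu(x_1\wedge y_1)\mu(x_1\vee y_1)=\mu(x_1)\mu(y_1)$ identically, and for each $k$ the inequality
\[
p_{t_k,t_{k+1}}\bigl(x_k\wedge y_k,\,x_{k+1}\wedge y_{k+1}\bigr)\,p_{t_k,t_{k+1}}\bigl(x_k\vee y_k,\,x_{k+1}\vee y_{k+1}\bigr)\;\geq\;p_{t_k,t_{k+1}}(x_k,x_{k+1})\,p_{t_k,t_{k+1}}(y_k,y_{k+1})
\]
is precisely the bivariate form of (\ref{eq:MTP2}) for $p_{t_k,t_{k+1}}$, which is equivalent to its TP$_2$ property. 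Multiplying these nonnegative inequalities gives (\ref{eq:MTP2}) for the joint density with no auxiliary hypotheses. With that substitution your argument is complete and matches the paper's intent; the rest of your reasoning is sound.
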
 
\begin{rem}
By Corollary \ref{corol:SCMMarkovTP2},  the processes below are SCM:
\begin{enumerate}
\item[i)]processes with independent and log-concave increments,
\item[ii)]absolute values of processes with independent, symmetric and PF$_{\infty}$ increments,
\item[iii)]one-dimensional diffusions with absolutely continuous transition kernel.
\item[iv)] Gaussian random vectors with an invertible covariance matrix such that the inverse matrix has negative off-diagonal elements.
\end{enumerate}  
\end{rem}

\section{Applications of Strong conditional monotonicity to peacocks}
We use strong conditional monotonicity results to study some generalisations of the Carr-Ewald-Xiao theorem (see (\ref{eq:CEX})). 

\subsection{Peacocks obtained by integrating with respect to a finite positive measure}
The following result was proved in \cite{BPR1}.
\begin{theorem}\label{theo:CM}
Let $(X_{\lambda},\lambda\geq0)$ be a real valued right-continuous process which is conditionally monotone in the sense that, for every $n\in\N^{\ast}$, every $i\in\{1,\cdots,n\}$, every $0\leq\lambda_1<\cdots<\lambda_n$, and every $\phi\in\mathcal{I}_n$,
\begin{equation}\label{eq:CM}\tag{CM}
z\longmapsto\E[\phi(X_{\lambda_1},\cdots,X_{\lambda_n})|X_{\lambda_i}=z]\,\text{is non-decreasing.}
\end{equation}
Suppose that, for every $t\geq0$ and every compact $K\subset\R_+$,
\begin{equation}\label{eq:CMInt}
\E\left[\exp\left(t\sup\limits_{\lambda\in K}X_{\lambda}\right)\right]<\infty\,\text{ and }\,
\inf\limits_{\lambda\in K}\E[\exp(tX_{\lambda})]>0.
\end{equation}
Then, for every finite positive measure $\mu$ on $\R_+$,
$$
\left(A^{(\mu)}_t:=\int_0^{\infty}\frac{e^{tX_{\lambda}}}{\E\left[e^{tX_{\lambda}}\right]}\mu(d\lambda),t\geq0\right)\text{ is a peacock.}
$$
\end{theorem}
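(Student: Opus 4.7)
The plan is to show that $t \longmapsto \E[\psi(A_t^{(\mu)})]$ is non-decreasing for every $\psi \in \mathbf{C}$ by differentiating in $t$ and invoking (\ref{eq:CM}). The first step is a reduction to the case where $\mu$ is finitely supported, $\mu = \sum_{i=1}^n a_i\,\delta_{\lambda_i}$ with $a_i > 0$ and $0 \leq \lambda_1 < \cdots < \lambda_n$: the right-continuity of $(X_\lambda)$ combined with (\ref{eq:CMInt}) ensures that $\lambda \longmapsto e^{tX_\lambda}/\E[e^{tX_\lambda}]$ has locally bounded $L^1(\Pb)$ norm, so $A_t^{(\mu)}$ is the $L^1(\Pb)$-limit of Riemann-sum approximations $A_t^{(\mu_N)}$ for finitely supported $\mu_N \to \mu$. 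The sublinear growth $|\psi(x)| \leq k_1 + k_2|x|$ valid on $\mathbf{C}$ then gives $\E[\psi(A_t^{(\mu_N)})] \to \E[\psi(A_t^{(\mu)})]$, and the peacock property passes to the limit.

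For the discrete case, set $\phi_i(t) = \E[e^{tX_{\lambda_i}}]$ and $m_i(t) = \phi_i'(t)/\phi_i(t)$. A direct computation yields $\frac{d}{dt}[e^{tX_{\lambda_i}}/\phi_i(t)] = (X_{\lambda_i}-m_i(t))\,e^{tX_{\lambda_i}}/\phi_i(t)$, and using the boundedness of $\psi'$ together with the exponential moment bounds supplied by (\ref{eq:CMInt}) to dominate, one may differentiate under the expectation to obtain
\[
\frac{d}{dt}\E\bigl[\psi(A_t^{(\mu)})\bigr] = \sum_{i=1}^n a_i\,\E\!\left[\psi'(A_t^{(\mu)})\,(X_{\lambda_i}-m_i(t))\,\frac{e^{tX_{\lambda_i}}}{\phi_i(t)}\right].
\]

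For each fixed $i$, introduce the Esscher-tilted probability measure on $\R$ defined by $\widetilde{\Pb}_i(dz) = (e^{tz}/\phi_i(t))\,\Pb_{X_{\lambda_i}}(dz)$, which has mean $m_i(t)$. Conditioning on $X_{\lambda_i}$ and setting $g_i(z) := \E[\psi'(A_t^{(\mu)}) \mid X_{\lambda_i} = z]$, the $i$-th term rewrites as $\int_\R (z - m_i(t))\,g_i(z)\,\widetilde{\Pb}_i(dz)$. Since $t \geq 0$ the map $x \longmapsto e^{tx}$ is non-decreasing, so $A_t^{(\mu)}$ is a componentwise non-decreasing function of $(X_{\lambda_1},\ldots,X_{\lambda_n})$; composing with the non-decreasing, bounded, continuous $\psi'$ yields an element of $\mathcal{I}_n$, so hypothesis (\ref{eq:CM}) forces $g_i$ to be non-decreasing. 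The one-dimensional Chebyshev inequality for the two comonotone functions $z \longmapsto z$ and $g_i$ under $\widetilde{\Pb}_i$ then gives $\int_\R (z - m_i(t))\,g_i(z)\,\widetilde{\Pb}_i(dz) \geq 0$, and summing with the positive weights $a_i$ completes the proof in the discrete case.

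The most delicate point is expected to be the passage from discrete $\mu_N$ to a general finite positive $\mu$: one must verify that the Riemann-sum approximations converge in a sense strong enough that $\E[\psi(A_t^{(\mu_N)})] \to \E[\psi(A_t^{(\mu)})]$ at every fixed $t$. Both halves of (\ref{eq:CMInt}) play a role here: the exponential integrability of $\sup_{\lambda \in K} X_\lambda$ controls the upper tail of the integrand, while the strict positivity of $\inf_{\lambda \in K} \phi_\lambda(t)$ prevents degeneracies in the normalising denominator, together ensuring the uniform integrability needed to interchange limits with $\E$ and with integration against $\mu$.
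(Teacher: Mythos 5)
Your proof is correct. Note, however, that the paper does not actually prove Theorem~\ref{theo:CM}: it states the result and cites~\cite{BPR1}. The closest proved analogue in the paper is Theorem~\ref{theo:SCMvolatilePcocNorme}, and Point~1) of its proof follows exactly the strategy you employ for a finitely supported $\mu$: differentiate $t\mapsto\E[\psi(N_t)]$, express the derivative as a sum over the atoms, condition each summand on $X_{\lambda_i}$, and use monotonicity of a conditional expectation to conclude non-negativity. The one genuine difference is in the final inequality: you tilt to the Esscher measure $\widetilde{\Pb}_i$ and apply the one-dimensional Chebyshev (covariance) inequality for two comonotone functions, whereas the paper splits at the right-continuous inverse $\widetilde{q}^{-1}(0)$ and bounds both pieces by the single value $K(n,\widetilde{q}^{-1}(0))$; these are two phrasings of the same one-dimensional positive-correlation fact and are entirely interchangeable. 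A structural point worth emphasising: your argument correctly needs only plain conditional monotonicity (\ref{eq:CM}), not the stronger (\ref{eq:SCM}), because $A_t^{(\mu)}$ is an \emph{additive} functional of $(X_{\lambda_1},\ldots,X_{\lambda_n})$, so that $g_i(z)=\E[\psi'(A_t^{(\mu)})\mid X_{\lambda_i}=z]$ is a conditional expectation of a single bounded function in $\mathcal{I}_n$. In Theorems~\ref{theo:SCMPcocNorme} and~\ref{theo:SCMvolatilePcocNorme}, by contrast, the multiplicative form of $N_t$ forces one to control a \emph{ratio} of conditional expectations, which is exactly why the stronger (\ref{eq:SCM}) hypothesis is introduced there. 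Your discrete-to-general passage (truncate to a compact, Riemann-approximate using right-continuity of $\lambda\mapsto X_\lambda$, dominate via both halves of (\ref{eq:CMInt})) matches the two-stage approximation scheme the paper uses in the proofs of its Theorems~\ref{theo:SCMPcocNorme} and~\ref{theo:SCMvolatilePcocNorme}.
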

This result is a generalisation of Carr-Ewald-Xiao theorem. Indeed, by making the change of variable $s=t\lambda$ in (\ref{eq:CEX}), the Brownian scaling property yields:
\begin{equation}\label{eq:CEXchangeVa}
\forall\,t\geq0,\,\text{ }{\bf N}_t=\int_0^1e^{B_{t\lambda}-\frac{t\lambda}{2}}d\lambda 
 \=\int_0^1 e^{\sqrt{t}B_{\lambda}-\frac{t\lambda}{2}}d\lambda.
\end{equation} 
Observe that $(B_{\lambda},\lambda\geq0)$ is a right-continuous conditionally monotone process since it is a Lï¿½vy process with log-concave increments. Then, by Theorem \ref{theo:CM}, 
$$
\left(\mathbf{A}_t:=\int_0^{1}\frac{e^{\sqrt{t}B_{\lambda}}}{\E\left[e^{\sqrt{t}B_{\lambda}}\right]}\,d\lambda,t\geq0\right)\text{ is a  peacock,}
$$
and we recover (\ref{eq:CEX}) thanks to (\ref{eq:CEXchangeVa}).\\
 In \cite{BPR1}, further examples of conditionally monotone processes are presented. For example, the Gamma subordinator and ``well-reversible" diffusions at a fixed time are conditionally monotone. We refer to \cite{BPR1} for the definition and some properties of ``well-reversible" diffusions. Moreover, Theorem \ref{theo:CM} applies to stochastic processes with MTP$_2$ finite-dimensional marginals (such as one-dimensional diffusions) since they satisfy (\ref{eq:SCM}) (which implies (\ref{eq:CM})). Note that one-dimensional diffusions are not necessarily ``well-reversible" at a fixed time. Indeed, ``well-reversible" diffusions at fixed time are unique strong solutions of  stochastic differential equations.
 
\subsection{Peacocks obtained by normalisation.}
Let $(V_t,t\geq0)$ be an integrable real valued process with a strictly positive mean, i.e. $\E[|V_t|]<\infty$ and $\E[V_t]>0$. Consider the process
$$
\left(N_t:=\frac{V_t}{\E[V_t]},t\geq0\right).
$$
Observe that $\E[N_t]=1$ for every $t\geq0$. Since $\E[N_t]$ does not depend on $t$ (which is a necessary condition to be a peacock), it is natural to look for processes $(V_t,t\geq0)$ for which $(N_t,t\geq0)$ is a peacock. Many examples of such processes are presented in \cite{BPR2}. Note that if ${\bf V}_t:=\displaystyle\int_0^te^{B_s-\frac{s}{2}}ds$, then (\ref{eq:CEX}) is equivalent to:
$$
\left({\bf N}_t:=\frac{{\bf V}_t}{\E[{\bf V}_t]},t\geq0\right)\text{ is a peacock.}
$$
One may also investigate on processes $(V_t,t\geq0)$ such that the centered process 
$(C_t:=V_t-\E[V_t],t\geq0)$ 
is a peacock. We do not treat this case here and refer to \cite{BPR2} and \cite{HPRY} for main results.\\
We deal with processes of the forms
\begin{equation}\label{eq:SCMFormVt1}\tag{F$_1$}
V^1_t=\exp\left(\int_0^tq(\lambda,X_{\lambda})\mu(d\lambda)\right)
\end{equation}
and
\begin{equation}\label{eq:SCMFormVt2}\tag{F$_2$}
V^2_t=\exp\left(\int_0^{\infty}q(\lambda,tX_{\lambda})\mu(d\lambda)\right),
\end{equation} 
where $\mu$ is a positive Radon measure, $q:\R_+\times \R\to\R$ a continuous function such that, for every $s\geq0$, $q_s:x\longmapsto q(s,x)$ is  non-decreasing (resp. non-increasing), and where $(X_{\lambda},\lambda\geq0)$ is a real valued process. Our purpose is to answer the following question: 
\begin{equation}\label{eq:Question}\tag{Q}
\text{Under which conditions is }\left(N^i_t:=\frac{V^i_t}{\E[V^i_t]},t\geq0\right)\,(i=1,2)
\text{ a peacock?}
\end{equation}
Remark that, in (\ref{eq:SCMFormVt1}), the parameter $t$ is a time parameter, while in (\ref{eq:SCMFormVt2}), it is a dilatation parameter. For this reason, we call {\em peacocks with respect to maturity} peacocks of type $N^1$ and {\em peacocks with respect to volatility} peacocks of type $N^2$.\\
For processes of type $N^1$, a partial answer to (\ref{eq:Question}) is given in \cite{BPR2} when  $X$ has  independent and log-concave increments. We extend this result to real valued processes  which satisfy (\ref{eq:SCM}). In particular, we  prove under some integrability hypotheses that  processes with  MTP$_2$ finite-dimensional marginals solve (\ref{eq:Question}).  

\subsubsection{Peacocks with respect to maturity} 
\begin{theorem}\label{theo:SCMPcocNorme}
Let $(X_{\lambda},\lambda\geq0)$ be a right-continuous process which satisfies (\ref{eq:SCM}). Let $\mu$ be a positive Radon measure and let $q:\R_+\times \R\to\R$ be a continuous function such that, for every $t\geq0$:
\begin{enumerate}
\item[i)]$y\longmapsto q(t,y)$ is non-decreasing (resp. non-increasing),
\item[ii)]the following integrability properties hold:
\begin{equation}\label{eq:SCMmarkovTpINT1}
\Theta_t:=\exp\left(\mu([0,t])\sup_{0\leq s\leq t}q(s,X_s)\right)\text{ is }\text{ integrable}
\end{equation}
and
\begin{equation}\label{eq:SCMmarkovTpINT2}
\Delta_t:=\E\left[\exp\left(\mu([0,t])\inf_{0\leq s\leq t}q(s,X_s)\right)\right]>0.
\end{equation}
\end{enumerate}
Then, 
\begin{equation}\label{eq:SCMpcoc}
\left(N_t:=\frac{\exp\left(\displaystyle\int_0^tq(s,X_s)\,\mu(ds)\right)}{\E\left[\exp\left(\displaystyle\int_0^tq(s,X_s)\,\mu(ds)\right)\right]},t\geq0\right)
\quad\text{is a peacock.}
\end{equation}
\end{theorem}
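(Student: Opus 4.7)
The strategy is to reduce the theorem, via discretisation of the integral in the exponent, to a finite-dimensional discrete peacock statement for which the SCM hypothesis can be invoked. I treat only the case where $q(s,\cdot)$ is non-decreasing; the non-increasing case reduces to it by replacing $X_\lambda$ with $-X_\lambda$, which preserves SCM (Remark~\ref{rem:SCM}(2)).

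\textbf{Step 1 (approximation).} Fix $\psi \in \mathbf{C}$. For a partition $\sigma = \{0 = s_0 < s_1 < \cdots < s_N\}$, set $g_k(x) := \exp\bigl(q(s_k,x)\,\mu((s_{k-1},s_k])\bigr)$; each $g_k$ is continuous, strictly positive, and non-decreasing. Let $V_t^\sigma := \prod_{s_k \leq t} g_k(X_{s_k})$ and $N_t^\sigma := V_t^\sigma/\E[V_t^\sigma]$. By right-continuity of $X$ and continuity of $q$, $V_t^\sigma \to V_t$ a.s.\ as the mesh of $\sigma$ tends to $0$. The integrability bounds \eqref{eq:SCMmarkovTpINT1}--\eqref{eq:SCMmarkovTpINT2} give a dominating envelope $\Theta_t$ for the numerator of $\psi(N_t^\sigma)$ (using $|\psi(x)| \leq k_1+k_2|x|$) and the positive lower bound $\Delta_t$ for the normalizer, so dominated convergence yields $\E[\psi(N_t^\sigma)] \to \E[\psi(N_t)]$. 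It is therefore enough to prove the peacock property for each partition, that is, to show that the discrete process $M_m := V_m/a_m$ (with $V_m := \prod_{k=1}^m g_k(X_{s_k})$, $a_m := \E[V_m]$) is a discrete peacock.

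\textbf{Step 2 (discrete peacock via SCM).} To obtain $M_m \preceq_{cx} M_{m+1}$, I condition on $\mathcal{F}_m := \sigma(X_{s_1},\ldots,X_{s_m})$ and apply Jensen's inequality to $\psi$:
\[
\E[\psi(M_{m+1})] \;\geq\; \E\bigl[\psi\bigl(V_m\,\overline g_{m+1}(X_{s_1},\ldots,X_{s_m})/a_{m+1}\bigr)\bigr],
\]
where $\overline g_{m+1}(x_1,\ldots,x_m) := \E[g_{m+1}(X_{s_{m+1}}) \mid X_{s_1}=x_1,\ldots,X_{s_m}=x_m]$. Exactly as in the (E$_l$) induction inside the proof of Theorem~\ref{theo:SCMrvMTP2} (applied to the SCM vector $(X_{s_1},\ldots,X_{s_{m+1}})$ with multiplicative weights $f_k = g_k$), $\overline g_{m+1}$ is componentwise non-decreasing. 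Both $V_m/a_m$ and $V_m\,\overline g_{m+1}/a_{m+1}$ are therefore non-decreasing functionals of the same SCM vector, each with mean $1$. A second conditioning on $V_m$, combined with Jensen and Ohlin's single-crossing lemma, reduces the one-dimensional convex ordering $V_m/a_m \preceq_{cx} V_m\,\overline g_{m+1}/a_{m+1}$ to showing that the scalar regression $u \mapsto \E[\overline g_{m+1}(X_{s_1},\ldots,X_{s_m}) \mid V_m = u]$ is non-decreasing. This in turn follows by invoking SCM once more with the $\prod g_k$ weights, which are exactly the density tilting needed so that the regression on the scalar statistic $V_m$ reduces to a one-coordinate conditional expectation handled directly by Definition~\ref{defi:SCM}.

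\textbf{Main obstacle.} The critical point is this last monotone-regression step: passing from the \emph{componentwise} monotonicity of $\overline g_{m+1}$ to the \emph{one-dimensional} monotonicity of its regression on the scalar functional $V_m$. This is exactly where SCM is strictly stronger than the CM of Theorem~\ref{theo:CM}: the product weights $\prod f_k$ built into Definition~\ref{defi:SCM} are precisely what is needed to absorb the density tilt by $V_m$ while preserving monotone conditional expectation, a manoeuvre that is simply unavailable under the plain CM hypothesis. Once the regression step is secured, Step~1 transfers the discrete peacock property to $(N_t, t \geq 0)$ and completes the proof.
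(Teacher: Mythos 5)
Your Step 1 (discretising $\mu$ and using dominated convergence via the envelopes $\Theta_T$, $\Delta_T$) is exactly the paper's reduction, and is fine. The gap is in Step 2, at two points, and both are fatal.

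First, your claim that $\overline g_{m+1}(x_1,\dots,x_m)=\E[g_{m+1}(X_{s_{m+1}})\mid X_{s_1}=x_1,\dots,X_{s_m}=x_m]$ is componentwise non-decreasing is \emph{not} a consequence of the SCM hypothesis. You cite the (E$_l$) induction from the proof of Theorem~\ref{theo:SCMrvMTP2}, but that induction is precisely where MTP$_2$ is used: the step (E$_{l-1}$)$\Rightarrow$(E$_l$) needs the density $\widehat p$ to satisfy (\ref{eq:MTP2}), not merely (\ref{eq:SCM}). Theorem~\ref{theo:SCMPcocNorme} assumes only (\ref{eq:SCM}), which gives monotonicity of a weighted conditional expectation given a \emph{single} coordinate $X_i$; it says nothing about conditioning on an entire initial block $(X_{s_1},\dots,X_{s_m})$.

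Second, and more seriously, your final step requires that $u\mapsto\E[\overline g_{m+1}(X_{s_1},\dots,X_{s_m})\mid V_m=u]$ be non-decreasing, where $V_m=\prod_{k\leq m}g_k(X_{s_k})$, and you assert that this ``reduces to a one-coordinate conditional expectation handled directly by Definition~\ref{defi:SCM}.'' It does not. For $m\geq 2$, $\{V_m=u\}$ is a hypersurface in $\R^m$, not a coordinate slice; the weights $\prod g_k$ appearing inside Definition~\ref{defi:SCM} tilt the density, but the conditioning variable in (\ref{eq:SCM}) is always a single $X_i$, never a scalar statistic of several coordinates. No mechanism in the definition converts a regression on $\prod g_k(X_{s_k})$ into a regression on one $X_{s_i}$, so the Ohlin-lemma step has no support.

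The correct mechanism, and the one the paper uses, conditions on the \emph{new} coordinate rather than on $\F_m$. Write $N_n-N_{n-1}=N_{n-1}(e^{\widetilde q_n(X_{\lambda_n})}-1)$, bound $\E[\psi(N_n)]-\E[\psi(N_{n-1})]\geq\E[\psi'(N_{n-1})(N_n-N_{n-1})]$ by convexity (no Jensen), and condition on $X_{\lambda_n}$. This produces exactly the ratio $K(n,z)$ of Definition~\ref{defi:SCM} with $f_k=e^{a_kq(\lambda_k,\cdot)}$, hence $K(n,\cdot)$ is non-decreasing by (\ref{eq:SCM}) verbatim. The factor $e^{\widetilde q_n(z)}-1$ is non-decreasing and changes sign once at $z_0=(\widetilde q_n)^{-1}(0)$, so replacing $K(n,X_{\lambda_n})$ by the constant $K(n,z_0)$ gives a lower bound in both sign regimes, and the remaining expectation vanishes because $\E[N_n-N_{n-1}]=0$. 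That single-crossing argument against the single new coordinate is what makes the SCM hypothesis (and not CM or MTP$_2$) exactly the right strength; your route through $\F_m$ and $V_m$ calls on information SCM does not provide.
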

\begin{proof}
We only consider the case where $y\longmapsto q(\lambda,y)$ is non-decreasing.\\
Let $T>0$ be fixed.  
\item[\textbf{1)}] We first suppose that $\mu$ has the form:
\begin{equation}\label{eq:CMeForme}
1_{[0,T]}d\mu=\sum_{i=1}^ra_i\delta_{\lambda_i}, 
\end{equation}
where  $r\in\N$, $r\geq2$, $a_1\geq0,a_2\geq0,\dots,a_r\geq0$,
$\sum_{i=1}^ra_i=\mu([0,T])$, 
$0\leq\lambda_1<\lambda_2<\dots<\lambda_r\leq T$, and where $\delta_{\lambda_i}$ is the Dirac measure at point $\lambda_i$.\\
We show that,  
\[
\left(N_n:=\exp\left(\sum_{i=1}^na_iq(\lambda_i,X_{\lambda_i})-h(n)\right),
n\in\{1,2,\cdots,r\}\right)
\text{ is a peacock,}
\]
where
\[
h(n):=\log\E\left[\exp\left(\sum_{i=1}^na_iq(\lambda_i,X_{\lambda_i})\right)\right].
\]
Note that:
\[
\E[N_n-N_{n-1}]=0, \text{ for every } n\in\{1,2,\cdots,r\}
\]
with
\begin{align*}
N_n-N_{n-1}&=N_{n-1}\left(e^{a_nq(\lambda_n,X_{\lambda_n})-h(n)+h(n-1)}-1\right)
=N_{n-1}\left(e^{\widetilde{q}_n(X_{\lambda_n})}-1\right)
\end{align*}
and
\[
\widetilde{q}_n(y)=a_nq(\lambda_n,y)-h(n)+h(n-1).
\]
Then, for every convex function $\psi\in\mathbf{C}$, 
\begin{align*}
\E[\psi(N_n)]-\E[\psi(N_{n-1})]  
&\geq\E\left[\psi'(N_{n-1})N_{n-1}\left(e^{\widetilde{q}_n(X_{\lambda_n})}-1\right)\right]\\
&=\E\left[K(n,X_{\lambda_n})\E[N_{n-1}|X_{\lambda_n}]
\left(e^{\widetilde{q}_n(X_{\lambda_n})}-1\right)\right],
\end{align*}
where
\begin{eqnarray*}
K(n,z)=\dfrac{\E[\psi'(N_{n-1})N_{n-1}|X_{\lambda_n}=z]}{\E[N_{n-1}|X_{\lambda_n}=z]}.
\end{eqnarray*}
Observe that the function $\phi:\R^{n-1}\to\R_+$  given by:
\[
\phi(x_1,\dots,x_{n-1})=\psi'\left[\exp\left(\sum_{i=1}^{n-1}a_iq(\lambda_i,x_i)-h(n-1)\right)\right]
\]
belongs to $\mathcal{I}_{n-1}$.  
If, for every $i\in\N^*$, we define:
\[
f_i(x)=e^{a_iq(\lambda_i,x)},\text{ for every }x\in\R;
\]
then, for every $n\in\{2,\cdots,r\}$,
\[
N_{n-1}=e^{-h(n-1)}\prod_{k=1}^{n-1}f_k(X_{\lambda_k})
\]
and
\[
K(n,z)=\dfrac{\E\left[\left.\displaystyle \phi(X_{\lambda_1},\dots,X_{\lambda_{n-1}})\prod\limits_{k=1}^{n-1}f_k(X_{\lambda_k})
\right|X_{\lambda_n}=z\right]}
{\E\left[\left.\displaystyle\prod\limits_{k=1}^{n-1}f_k(X_{\lambda_k})\right|X_{\lambda_n}=z\right]}.
\]
Note that $K(n,z)$ is well defined since, for every $n\in\{1,2,\cdots,r\}$,
\begin{eqnarray*}
\E\left[\prod\limits_{k=1}^{n}f_k(X_{\lambda_k})\right]&=&
\E\left[\exp\left(\sum\limits_{k=1}^na_iq(\lambda_i,X_{\lambda_i})\right)\right]\\
&\leq&\E\left[\exp\left(\sup\limits_{0\leq\lambda\leq T}q(\lambda,X_{\lambda})
\sum\limits_{k=1}^na_i\right)\right]\\
&\leq&\E\left[\exp\left(\sup\limits_{0\leq\lambda\leq T}q(\lambda,X_{\lambda})
\sum\limits_{k=1}^ra_i\right)\vee1\right]\\
&=&\E\left[\exp\left(\alpha(T)\sup\limits_{0\leq\lambda\leq T}q(\lambda,X_{\lambda})
\right)\vee1\right]\\ &=&\E[\Theta_T\vee1]<\infty.
\end{eqnarray*}
By (\ref{eq:SCM}),  $K(n,z)$ is non-decreasing with respect to $z$.\\
Now, for every  $n\in\N^{\ast}$, we denote by $(\widetilde{q}_{n})^{-1}$ the right-continuous inverse of $\widetilde{q}_n$ and we set:
\[
V(n,X_{\lambda_n}):=K(n,X_{\lambda_n})\E[N_{n-1}|X_{\lambda_n}]
\left(e^{\widetilde{q}_n(X_{\lambda_n})}-1\right).
\]
Then,
 
\begin{enumerate}
\item[i)] if $X_{\lambda_n}\leq (\widetilde{q}_{n})^{-1}(0)$, then $e^{\widetilde{q}_n(X_{\lambda_n})}-1\leq0$ and 
\begin{align*}
V(n,X_{\lambda_n})\geq  
K\left(n, (\widetilde{q}_{n})^{-1}(0)\right)
\E[N_{n-1}|X_{\lambda_n}]
\left(e^{\widetilde{q}_n(X_{\lambda_n})}-1\right),
\end{align*}
\item[ii)]  if $X_{\lambda_n}\geq (\widetilde{q}_{n})^{-1}(0)$, then  $e^{\widetilde{q}_n(X_{\lambda_n})}-1\geq0$ and 
\begin{align*}
V(n,X_{\lambda_n})\geq  
K\left(n, (\widetilde{q}_{n})^{-1}(0)\right)
\E[N_{n-1}|X_{\lambda_n}]
\left(e^{\widetilde{q}_n(X_{\lambda_n})}-1\right).
\end{align*}
\end{enumerate}
As a consequence,
\begin{align*}
&\E[\psi(N_n)]-\E_x[\psi(N_{n-1})]\\
&\geq \E[ V(n,X_{\lambda_n})] 
\geq K\left(n, (\widetilde{q}_{n})^{-1}(0)\right)
\E\left[\E[N_{n-1}|X_{\lambda_n}]
\left(e^{\widetilde{q}_{n}(X_{\lambda_n})}-1\right)\right]\\
&=K\left(n, (\widetilde{q}_{n})^{-1}(0)\right) 
\E\left[N_{n-1} 
\left(e^{\widetilde{q}_n(X_{\lambda_n})}-1\right)\right]\\
&=K\left(n, (\widetilde{q}_{n})^{-1}(0)\right) 
\E\left[N_n-N_{n-1}\right]=0; 
\end{align*}
which shows that, for every  integer $r\geq2$, 
\[
\left(N_n:=\exp\left(\sum_{i=1}^na_iq(\lambda_i,X_{\lambda_i})-h(n)\right),
n\in\{1,2,\cdots,r\}\right)
\text{ is a peacock.}
\]
\vspace{0.0cm}\\
\textbf{2) }We consider $\nu=1_{[0,T]}d\mu$, and, for every $0\leq t\leq T$, we set:
\[
N^{(\nu)}_t=\dfrac{\exp\left(\displaystyle\int_0^t q(u,X_u)\nu(du)\right)}
{\E\left[\exp\left(\displaystyle\int_0^t q(u,X_u)\nu(du)\right) \right]}.
\]
Since the function $\lambda\in[0,T]\longmapsto q(\lambda,X_\lambda)$
is right-continuous and bounded from above by $\sup\limits_{0\leq \lambda\leq T}|q(\lambda,X_\lambda)|$ which is finite a.s., there exists a sequence $(\nu_n,n\in\N)$ of measures of the form (\ref{eq:CMeForme}), such that,
for every $n\in\N$, supp$\,\nu_n\subset[0,T]$, $\int\nu_n(du)=\int\nu(du)$ and, for every $0\leq t\leq T$,
\begin{equation}\label{eq:cvge1}
\lim\limits_{n\to\infty} \exp\left(\int_0^t q(u,X_u)\nu_n(du)\right)=
\exp\left(\int_0^t q(u,X_u)\nu(du)\right)\text{ a.s.}
\end{equation}
Moreover, for every $0\leq t\leq T$ and every $n\in\N$,
\begin{align*}
& \exp\left(\displaystyle\int_0^t q(u,X_u)\nu_n(du)\right)  \\
&\leq \exp\left(\sup\limits_{0\leq\lambda\leq T}q(\lambda,X_\lambda)\displaystyle\int_0^t\nu_n(du)\right) \\
&\leq \exp\left(\sup\limits_{0\leq\lambda\leq T} q(\lambda,X_\lambda)\displaystyle\int_0^T\nu_n(du)\right)\vee1 \\
&= \exp\left(\sup\limits_{0\leq\lambda\leq T}q(\lambda,X_\lambda)\displaystyle\int_0^T\nu(du)\right)\vee1=
 \Theta_T\vee1 
\end{align*}
which is integrable from (\ref{eq:SCMmarkovTpINT1}). 
By the dominated convergence theorem,
\begin{equation}\label{eq:cvge2}
\lim\limits_{n\to\infty}\E\left[ \exp\left(\displaystyle\int_0^t q(u,X_u)\nu_n(du)\right)\right]=
\E\left[\exp\left(\displaystyle\int_0^t q(u,X_u)\nu(du)\right)\right].
\end{equation}
Using (\ref{eq:cvge1}) and (\ref{eq:cvge2}), we obtain:
\begin{equation}\label{eq:cvge3}
\lim\limits_{n\to\infty}N_t^{(\nu_n)}=N_t^{(\nu)}\text{ a.s., for every }0\leq t\leq T.
\end{equation}
But, we proved in $\textbf{1)}$ that:
\begin{equation}
\left(N_t^{(\nu_n)},0\leq t\leq T\right)\text{ is a peacock for every }n\in\N,
\end{equation}
i.e., for every $0\leq s<t\leq T$ and every $\psi\in\mathbf{C}$:
\begin{equation}\label{eq:pcoc-d}
\E\left[\psi(N_s^{(\nu_n)})\right]\leq\E_x\left[\psi(N_t^{(\nu_n)})\right].
\end{equation}
Besides,
\begin{equation}\label{eq:int3}
 \sup\limits_{0\leq t\leq T}\sup\limits_{n\geq0}\left|N_t^{(\nu_n)}\right| 
\leq  \dfrac{\Theta_T\vee1}{\Delta_T\wedge1},
\end{equation}
which is integrable from  (\ref{eq:SCMmarkovTpINT1}) and (\ref{eq:SCMmarkovTpINT2}). Using the dominated convergence theorem, we pass to the limit in  (\ref{eq:pcoc-d}) as $n\to\infty$ and deduce that 
$(N_t^{(\mu)},0\leq t\leq T)$ is a peacock for every $T>0$.
\end{proof}

Now, we prove a version of Theorem \ref{theo:SCMPcocNorme} for a squared Bessel of dimension $0$ (denoted BESQ$^0$). Note that the transition function of a BESQ$^0$ is not absolutely continuous with respect to Lebesgue measure. Then Theorem \ref{theo:SCMrvMTP2} does not apply. In particular,  the finite-dimensional marginals of a BESQ$^0$ do not satisfy (\ref{eq:SCM}). Nevertheless, a limit theorem due to Feller \cite{Fe} for critical Galton-Watson branching processes allows to exhibit peacocks of type (\ref{eq:SCMpcoc}).  
 
\begin{exa}\label{exa:Galton-Watson}(A version of Theorem \ref{theo:SCMPcocNorme} for a BESQ$^0$).\\
For every $k\in\N^{\ast}$, let $Z^k:=\left(Z_n^k,n\in\N\right)$ denote a Galton-Watson branching process starting with $k$ individuals, and which has a geometric reproduction law $\nu$ of parameter $\dfrac{1}{2}$, i.e. 
$$
\nu(i)=2^{-i-1},\quad\text{for every }i\in\N.
$$
For every $k\in \N^{\ast}$, $Z^k$ is an homogeneous Markov chain with values in $\N$, and its transition probability matrix $Q$ is given by:
$$
\forall\,j\in\N,\quad Q(0,j)=
\left\{
\begin{array}{ll}
1&\text{if }j=0\\
0&\text{otherwise}
\end{array}
\right.
$$
and
$$
\forall\,(i,j)\in\N^{\ast}\times\N,\quad
Q(i,j)=
\left(
\begin{array}{cc}
i+j-1\\j
\end{array}
\right)2^{-(i+j)}.
$$
We consider the family  $\left(Q^{(n)},n\in\N\right)$ of transition functions defined on $\N\times\N$ by:
$$
Q^{(0)}(i,j)=\left\{
\begin{array}{ll}
1&\text{if }i=j\\
0&\text{otherwise}
\end{array}
\right.
$$
and
$$
\forall\,n\geq1,\quad Q^{(n+1)}(i,j)=\sum\limits_{n\in\N}Q(i,m)Q^{(n)}(m,j).
$$
Since the function 
$(i,j)\longmapsto\left(
\begin{array}{cc}
i+j-1\\j
\end{array}
\right)$ is $\text{TP}_2$, we deduce from (\ref{eq:ConvProduct}) that $Q^{(n)}$ is $\text{TP}_2$ for every $n\in\N$.\\
For every $\lambda\geq0$ and every $k\in\N^{\ast}$, we set:
$$
Y^k_{\lambda}=\frac{1}{k}Z^k_{[k\lambda]},
$$
where $[\cdot]$ denotes the floor function. Then, $\left(Y^k_{\lambda},\lambda\geq0\right)$ is a Markov process with values in $\dfrac{1}{k}\N$, and its transition function $(P_{\zeta,\eta},0\leq \zeta<\eta)$ is given by:
$$
\forall\,x,y\in\frac{1}{k}\N,\quad P_{\zeta,\eta}(x,y)=Q^{([k\eta]-[k\zeta])}(kx,ky).
$$
Observe that $P_{\zeta,\eta}$ is $\text{TP}_2$ for every $\zeta$ and $\eta$. By Theorem \ref{theo:SCMrvMTP2}, for every $k\in\N^{\ast}$, $\left(Y^k_{\lambda},\lambda\geq0\right)$ is SCM.\\ 
Let $q:\R_+\times\R\to\R$ be bounded, continuous and such that, for every $\lambda\geq0$, $x\longmapsto q(\lambda,x)$ is non-decreasing (resp. non-increasing). It follows from Theorem \ref{theo:SCMPcocNorme} that, for every $a_i\geq0$, $i\in\N^{\ast}$, and for every strictly increasing sequence $(\lambda_i,i\geq1)$ in $\R_+^{\ast}$,  
\begin{equation}\label{eq:GWpMarkovTP2}
\left(N^{k}_n:=\frac{\exp\left(\displaystyle\sum\limits_{i=1}^na_iq(\lambda_i,Y^{k}_{\lambda_i})\right)}{\E\left[\exp\left(\displaystyle\sum\limits_{i=1}^na_iq(\lambda_i,Y^{k}_{\lambda_i})\right) \right]},n\in\N\right)\quad\text{is a peacock.}
\end{equation}
A result due to Feller \cite{Fe} states that, as $k$ tends to $\infty$, $\left(Y^k_{\lambda},\lambda\geq0\right)$ converges in distribution to 
$\left(Y^{\infty}_{\lambda},\lambda\geq0\right)$, which is the unique strong solution of:   
$$
dZ_{\lambda}=\sqrt{2Z_{\lambda}}dB_{\lambda},\quad Z_0=1,
$$
where $(B_{\lambda},\lambda\geq0)$  is a standard Brownian motion. In particular, $\left(Y^k_{\lambda},\lambda\geq0\right)$ converges in sense of finite distributions to $\left(Y^{\infty}_{\lambda},\lambda\geq0\right)$. Then, (\ref{eq:GWpMarkovTP2}) yields:
\begin{equation}\label{eq:BESQ0discretMarkovTP2}
\left(N^{\infty}_n:=\frac{\exp\left(\displaystyle\sum\limits_{i=1}^na_i q(\lambda_i,Y^{\infty}_{\lambda_i})\right)}{\E\left[\exp\left(\displaystyle\sum\limits_{i=1}^na_iq(\lambda_i,Y^{\infty}_{\lambda_i})\right) \right]},n\in\N\right)\quad\text{is a peacock.}
\end{equation}
As a consequense, we obtain the following result:
\begin{corol}
Let $(Y_t,t\geq0)$ be a BESQ$^0$ issued from 1, and let $q:\R_+\times\R_+\to\R$ be a continuous and bounded function such that, for every $\lambda\geq0$,  $y\longmapsto q(\lambda,y)$ is non-decreasing (resp. non-increasing). Then, for every positive Radon measure $\mu$ on $\R_+$,
$$
\left(N_t:=\frac{\exp\left(\displaystyle\int_0^tq(s,Y_s)\mu(ds)\right)}{\E\left[\exp\left(\displaystyle\int_0^tq(s,Y_s)\mu(ds)\right)\right]},t\geq0\right)
\quad\text{is a peacock.}
$$ 
\end{corol}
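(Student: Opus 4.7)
The plan is to repeat the measure-approximation argument from part 2) of the proof of Theorem \ref{theo:SCMPcocNorme}, replacing its part 1) (which relied on SCM) by the already-established discrete peacock (\ref{eq:BESQ0discretMarkovTP2}) from Example \ref{exa:Galton-Watson}. Theorem \ref{theo:SCMPcocNorme} cannot be applied directly since the BESQ$^0$ transition kernel is not absolutely continuous, so Theorem \ref{theo:SCMrvMTP2} does not supply SCM. Luckily, (\ref{eq:BESQ0discretMarkovTP2}) furnishes exactly the discrete-in-$\mu$ analogue needed to drive the approximation.

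First I would fix $T>0$, set $\nu:=\mathbf{1}_{[0,T]}\,\mu$, and reduce to showing that $(N_t^{(\nu)},\,0\leq t \leq T)$ is a peacock. Since $q$ is bounded, one gets the uniform two-sided bound
\[
e^{-\|q\|_\infty\,\nu([0,T])}\;\leq\;N_t^{(\nu)}\;\leq\;e^{\|q\|_\infty\,\nu([0,T])},
\]
which makes all integrability requirements analogous to (\ref{eq:SCMmarkovTpINT1})--(\ref{eq:SCMmarkovTpINT2}) trivial. Next I would construct a sequence $(\nu_n)$ of finite sums of Dirac masses on $(0,T]$ with $\nu_n((0,T])=\nu((0,T])$ and such that, for every continuous bounded $f:[0,T]\to\R$ and every $0\leq t\leq T$,
\[
\int_0^t f(s)\,\nu_n(ds) \xrightarrow[n\to\infty]{} \int_0^t f(s)\,\nu(ds)
\]
(any Riemann-type discretisation at continuity points of $\nu$ does the job). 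Since a.s.\ the map $s\mapsto q(s,Y_s)$ is continuous and bounded, this implies $N_t^{(\nu_n)}\to N_t^{(\nu)}$ a.s.\ for every $t\in[0,T]$.

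For each $n$, as $t$ increases, the integral $\int_0^t q(s,Y_s)\,\nu_n(ds)$ simply accumulates successive Dirac contributions $a_i^n q(\lambda_i^n, Y_{\lambda_i^n})$, so (\ref{eq:BESQ0discretMarkovTP2}), applied to the strictly increasing atoms of $\nu_n$ and the coefficients $(a_i^n)$, directly yields the peacock property of $(N_t^{(\nu_n)},\,0\leq t\leq T)$ for each fixed $n$. Passing to the limit $n\to\infty$ in the inequality $\E[\psi(N_s^{(\nu_n)})]\leq \E[\psi(N_t^{(\nu_n)})]$ for $\psi\in\mathbf{C}$ and $0\leq s<t\leq T$ is justified by the uniform bound on $N_t^{(\nu_n)}$ together with dominated convergence, and gives the desired peacock property of $(N_t^{(\nu)},\,0\leq t\leq T)$.

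The main obstacle is purely conceptual rather than technical: SCM is unavailable for BESQ$^0$, which forces us to route through the Galton--Watson approximation of Example \ref{exa:Galton-Watson} and Feller's limit theorem. Once (\ref{eq:BESQ0discretMarkovTP2}) is in hand, extending from discrete Dirac combinations to arbitrary positive Radon measures is a routine dominated-convergence argument, strictly parallel to part 2) of the proof of Theorem \ref{theo:SCMPcocNorme}, with the boundedness of $q$ removing any need to check (\ref{eq:SCMmarkovTpINT1})--(\ref{eq:SCMmarkovTpINT2}) separately.
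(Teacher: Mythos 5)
Your proof is correct and takes exactly the route the paper intends: the paper leaves the passage from the discrete-measure peacock (\ref{eq:BESQ0discretMarkovTP2}) to a general positive Radon measure implicit (``As a consequence, \dots''), and you supply that missing step by repeating the approximation-by-Dirac-combinations argument of part 2) of the proof of Theorem \ref{theo:SCMPcocNorme}, with the observation that boundedness of $q$ renders the integrability conditions (\ref{eq:SCMmarkovTpINT1})--(\ref{eq:SCMmarkovTpINT2}) automatic and the dominated-convergence passage immediate.
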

\end{exa}

\subsubsection{Peacocks with respect to volatility}
\begin{theorem}\label{theo:SCMvolatilePcocNorme}
Let $(X_{\lambda},\lambda\geq0)$ a right-continuous process which is SCM. Let $q:\R_+\times \R\to\R$ be a continuous function such that, for every $\lambda\geq0$, $x\longmapsto q(\lambda,x)$ is of $\mathcal{C}^1$ class, and let $\mu$ denote a positive Radon measure on $\R_+$ satisfying:
$$
\forall\,t\geq0,\,\text{ }\E\left[\exp\left(\int_0^{\infty}q(\lambda,tX_{\lambda})\mu(d\lambda)\right)\right]<\infty.
$$
We suppose that:
\begin{enumerate}
\item[i)]for every $\lambda\geq0$, the functions $x\longmapsto q(\lambda,x)$ and $x\longmapsto x\dfrac{\partial q}{\partial x}(\lambda,x)$ are non-decreasing (resp. non-increasing),
\item[ii)]for every $t,\lambda\geq0$, there exists $\alpha=\alpha(t,\lambda)>1$ such that:
\begin{equation}\label{eq:SCMF2Int0}
\E\left[|X_{\lambda}|^{\alpha}\left(\frac{\partial q}{\partial x}\right)^{\alpha}(\lambda,tX_{\lambda})\right]<\infty,
\end{equation}
\item[iii)]for every $t,\beta>0$, and for every compact $K\subset\R_+$,
\begin{equation}\label{eq:SCMF2Int1}
 \Theta^{(K)}_{t,\beta}:= \exp\left(\beta\,\sup\limits_{\lambda\in K} q(\lambda,tX_{\lambda})\right) 
 \quad\text{is integrable,}
\end{equation}
and  
\begin{equation}\label{eq:SCMF2Int2}
\Delta^{(K)}_{t,\beta}:=\E\left[\exp\left(\beta\,\inf\limits_{\lambda\in K} q(\lambda,t
X_{\lambda})\right)\right]>0.
\end{equation}
\end{enumerate}
Then,
\begin{equation}\label{eq:PcocVolatileMarkovTP2}
\left(N^{(\mu)}_t:=\frac{\exp\left(\displaystyle\int_0^{\infty}q(\lambda,tX_{\lambda})\mu(d\lambda)\right)}{ \E\left[\exp\left(\displaystyle\int_0^{\infty}q(\lambda,tX_{\lambda}) \mu(d\lambda)\right)\right]}, t\geq0\right)
\quad\text{is a peacock.}
\end{equation}
\end{theorem}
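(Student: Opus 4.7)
The plan is to follow the strategy of Theorem \ref{theo:SCMPcocNorme}: first reduce to a finitely supported $\mu = \sum_{i=1}^r a_i \delta_{\lambda_i}$ by approximation and dominated convergence, then prove the peacock property for such $\mu$ by a differentiation-in-$t$ argument. For the reduction, I would approximate $\mu$ by finite Dirac combinations $\mu_n$ converging weakly to $\mu$ with supports in a growing family of compacts; the bounds (\ref{eq:SCMF2Int1}) and (\ref{eq:SCMF2Int2}), together with the monotonicity (i), furnish an integrable dominant of the form $\Theta^{(K)}_{t,\mu(K)}\vee 1$ for $V^2_t$ and a positive lower bound for $\E[V^2_t]$ via $\Delta^{(K)}_{t,\mu(K)}$, exactly as in Part \textbf{2)} of the proof of Theorem \ref{theo:SCMPcocNorme}. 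This will transfer the peacock property from the $\mu_n$ case to the general case.

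For the core case, set $f_i(x) := \exp\bigl(a_i q(\lambda_i, tx)\bigr)$, so that $V_t := \prod_{i=1}^r f_i(X_{\lambda_i})$ and $N_t = V_t/\E[V_t]$. Assumption (\ref{eq:SCMF2Int0}) justifies differentiation under the expectation and gives
\begin{equation*}
\frac{d}{dt}N_t = N_t(W_t - m_t),\qquad W_t := \sum_{i=1}^r a_i h_i(X_{\lambda_i}),\quad h_i(x) := x\,\frac{\partial q}{\partial x}(\lambda_i, tx),\quad m_t := \E[N_t W_t].
\end{equation*}
Fixing $\psi \in \mathbf{C}$ and passing to the tilted probability $d\Pb^* := N_t\,d\Pb$ (for which $\E^*[W_t] = m_t$), one obtains
\begin{equation*}
\frac{d}{dt}\E[\psi(N_t)] = \E\bigl[\psi'(N_t) N_t (W_t - m_t)\bigr] = \sum_{i=1}^r a_i\,\mathrm{Cov}^*\!\bigl(\psi'(N_t),\, h_i(X_{\lambda_i})\bigr).
\end{equation*}

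The heart of the proof is to show that each covariance is non-negative. By hypothesis (i), $V_t$ (hence $N_t$) is componentwise non-decreasing in $(X_{\lambda_j})_j$, and each $h_i$ is non-decreasing in its argument. Since $\psi'$ is bounded and non-decreasing, the function $\phi(x_1,\dots,x_r) := \psi'\!\bigl(\prod_j f_j(x_j)/\E[V_t]\bigr)$ belongs to $\mathcal{I}_r$. Applying the SCM property of $(X_\lambda)$ to $\phi$ with the weights $(f_j)_{j=1}^r$ shows that
\begin{equation*}
z \longmapsto \E^*[\psi'(N_t) \mid X_{\lambda_i} = z] = \frac{\E[\phi(X)\prod_j f_j(X_{\lambda_j}) \mid X_{\lambda_i} = z]}{\E[\prod_j f_j(X_{\lambda_j}) \mid X_{\lambda_i} = z]}
\end{equation*}
is non-decreasing. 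Since $h_i(X_{\lambda_i})$ is $X_{\lambda_i}$-measurable, conditioning on $X_{\lambda_i}$ reduces $\mathrm{Cov}^*\bigl(\psi'(N_t), h_i(X_{\lambda_i})\bigr)$ to the covariance of two non-decreasing functions of the single real-valued variable $X_{\lambda_i}$, which is automatically non-negative. Integrating over $[s,t]$ yields $\E[\psi(N_s)] \leq \E[\psi(N_t)]$, and the approximation argument of the first paragraph transfers this to the general $\mu$.

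The main obstacle is technical rather than conceptual. Justifying differentiation under $\E$ requires a uniform $L^1$ bound on $V_t W_t$, which I would derive from a H\"older-type bound combining (\ref{eq:SCMF2Int1}) (to control $V_t$) with (\ref{eq:SCMF2Int0}) (to control the $X_{\lambda_i}\,\partial_x q$ factors); the exponent $\alpha > 1$ in (ii) is what makes the H\"older step close. The dominated convergence in the reduction step is likewise delicate because $\mu$ need not have compact support, so one truncates first to $[0,T]$, discretizes, and then sends $T\to\infty$, each step controlled by the majorants in (iii). Once these integrability verifications are in place, the covariance identity and the SCM-based monotonicity of $z \mapsto \E^*[\psi'(N_t)\mid X_{\lambda_i} = z]$ form the clean conceptual core of the argument.
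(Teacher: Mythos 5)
Your core argument for finitely supported $\mu$ matches the paper's exactly, merely rephrased: the tilted measure $d\Pb^* = N_t\,d\Pb$ and the covariance formulation are what the paper does with $\widetilde{h}_{\lambda_i}(t) = \E^*[h_i(X_{\lambda_i})]$ and the factorisation $\mathbf{\Delta}_i = \E\bigl[K_i(m,X_{\lambda_i})\,\E[N_t\mid X_{\lambda_i}]\,\widetilde{q}_{\lambda_i}(X_{\lambda_i})\bigr]$; your invocation of ``covariance of two comonotone functions of $X_{\lambda_i}$ under $\Pb^*$ is $\geq 0$'' is precisely the paper's sign-splitting step at $\widetilde{q}_{\lambda_i}^{-1}(0)$. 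The one place where your sketch drifts from what actually works is the final $T\to\infty$ extension. You propose to control it ``by the majorants in (iii),'' but those majorants $\Theta^{(K)}_{t,\beta}$, $\Delta^{(K)}_{t,\beta}$ are tied to a fixed compact $K$, so they cannot dominate uniformly over an increasing family $[0,n]$ exhausting $\R_+$. The paper instead observes that for fixed $t$, Theorem~\ref{theo:SCMPcocNorme} applied with $q(\zeta,\cdot)$ replaced by $q(\zeta, t\,\cdot)$ shows that $n\mapsto N_t^{(\mu_n)}$ is itself a peacock (with $n$ playing the role of maturity), so $\E[\psi(N_t^{(\mu_n)})]$ is non-decreasing in $n$, and one passes to the limit in $\E[\psi(N_s^{(\mu_n)})]\leq\E[\psi(N_t^{(\mu_n)})]$ using this monotonicity rather than a dominated-convergence argument with a global majorant. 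This is a genuine, if secondary, gap in your plan; you do flag the step as delicate, but the mechanism you suggest would not close it as written.
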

\begin{proof}
We shall suppose without loss of generality that, for every $\lambda\geq0$, the functions $x\longmapsto q(\lambda,x)$ and $x\longmapsto x\dfrac{\partial q}{\partial x}(\lambda,x)$ are non-decreasing.
\item[1)]We first treat the case where $\mu$ is of the form
$$
\mu=\sum\limits_{i=1}^na_i\delta_{\lambda_i},
$$
where $m\in\N^{\ast}$, $a_1\geq0,\cdots,a_n\geq0$, $0<\lambda_1<\cdots<\lambda_m$, and where $\delta_{\lambda}$ denote the Dirac measure at point $\lambda$. Precisely, we show that
$$
\left(N_t:=\exp\left(\sum\limits_{i=1}^ma_iq(\lambda_i,tX_{\lambda_i})-h(t)\right),t\geq0\right) \quad\text{is a peacock},
$$
with 
$$
h(t)=\log\E\left[\exp\left(\sum\limits_{i=1}^ma_iq(\lambda_i,tX_{\lambda_i})\right)\right].
$$
We set  $\overline{\mu}:=\sum\limits_{i=1}^ma_i$. Since the functions $x\longmapsto q(\lambda,x)$
and $x\longmapsto x\dfrac{\partial q}{\partial x}(\lambda,x)$ are non-decreasing, then, for every $0<b<c$, and every $t\in[b,c]$,
\begin{equation}\label{eq:SCMF2Sup01}
\exp\left(\sum\limits_{i=1}^ma_iq(\lambda_i,tX_{\lambda_i})\right)\leq\exp\left(\overline{\mu} \sup\limits_{i\in\{1,\cdots,m\}}q(\lambda_i,0)\right)+ \exp\left(\overline{\mu}\sup\limits_{i\in\{1,\cdots,m\}}q(\lambda_i,c\,X_{\lambda_i})\right),
\end{equation}
and for every $i\in\{1,\cdots,m\}$,
\begin{equation}\label{eq:SCMF2Sup02}
|X_{\lambda_i}|\frac{\partial q}{\partial x}(\lambda_i,tX_{\lambda_i})\leq\frac{c}{b}|X_{\lambda_i}| \frac{\partial q}{\partial x}(\lambda_i,c\,X_{\lambda_i}).
\end{equation}
We deduce from (\ref{eq:SCMF2Int0}), (\ref{eq:SCMF2Int1}), (\ref{eq:SCMF2Sup01}) and (\ref{eq:SCMF2Sup02}) that, for every $0<b<c$,
\begin{equation}\label{eq:SCMF2Sup}
\E\left[\sup\limits_{t\in[b,c]} \left\{\sum_{i=1}^ma_i|X_{\lambda_i}|\frac{\partial q}{\partial x}(\lambda_i,tX_{\lambda_i})\exp\left(\sum\limits_{k=1}^ma_k q(\lambda_k,tX_{\lambda_k})\right) \right\}\right]<\infty.
\end{equation}
Consequently, $h$ is continuous on $[0,+\infty[$, differentiable on $]0,+\infty[$, and for every $t>0$,
$$
h'(t)e^{h(t)}=\sum\limits_{i=1}^ma_i\,\E\left[X_{\lambda_i}\frac{\partial q}{\partial x}(\lambda_i,tX_{\lambda_i})\exp\left(\sum\limits_{k=1}^ma_k q(\lambda_k,tX_{\lambda_k})\right)\right],
$$ 
i.e.
\begin{equation}\label{eq:HmMtp01}
h'(t)=\sum\limits_{i=1}^ma_i\,\E\left[N_t\,X_{\lambda_i}\frac{\partial q}{\partial x}(\lambda_i,tX_{\lambda_i})\right].
\end{equation}
Now, define
\begin{equation}\label{eq:HmMtp02}
\widetilde{h}_{\lambda_i}(t)=\E\left[N_t\,X_{\lambda_i}\dfrac{\partial q}{\partial x}(\lambda_i,tX_{\lambda_i})\right]
\end{equation}
so that
\begin{equation}\label{eq:HmMtp03}
h'(t)=\sum\limits_{i=1}^ma_i\widetilde{h}_{\lambda_i}(t).
\end{equation}
Since $\E[N_t]=1$, then, for every $t>0$ and $i\in\{1,\cdots,n\}$, (\ref{eq:HmMtp02}) yields:
\begin{equation}\label{eq:HmMtpMean0}
 \E\left[N_t\left(X_{\lambda_i}\frac{\partial q}{\partial x}(\lambda_i,tX_{\lambda_i})-\widetilde{h}_{\lambda_i}(t)\right)\right]=0.
\end{equation}
On the other hand, if $\psi$ is a convex function  in $\mathbf{C}$, then (\ref{eq:SCMF2Int1}), (\ref{eq:HmMtp01}) and (\ref{eq:HmMtp03}) imply
$$
\frac{\partial}{\partial t}\E[\psi(N_t)]=\sum\limits_{i=1}^ma_i\,\E\left[\psi'(N_t)N_t\left(X_{\lambda_i}\frac{\partial q}{\partial x}(\lambda_i,tX_{\lambda_i})-\widetilde{h}_{\lambda_i}(t)\right)\right].
$$
Thus, it remains to prove that, for every $i\in\{1,\cdots,m\}$,
\begin{equation}\label{eq:HmMtpPcocDiscret}
\mathbf{\Delta}_i:=\E\left[\psi'(N_t)N_t\left(X_{\lambda_i}\frac{\partial q}{\partial x}(\lambda_i,tX_{\lambda_i})-\widetilde{h}_{\lambda_i}(t)\right)\right]\geq0.
\end{equation}
Observe that the function
$$
\phi:(x_1,\cdots,x_m)\longmapsto\psi'\left(\exp\left(\sum_{k=1}^ma_kq(\lambda_k,tx_k)-h(t)\right)\right)
$$ 
belongs to $\mathcal{I}_m$. Moreover, if, for every $k\in\{1,\cdots,m\}$, we set $f_k(x)=\exp(a_kq(\lambda_k,tx))$,
then
$$
N_t=e^{-h(t)}\prod_{k=1}^mf_k(X_{\lambda_k}).
$$
Therefore, by setting
$$
K_{i}(m,z):=\frac{\E\left[\left.\phi(X_{\lambda_1},\cdots,X_{\lambda_m})\prod\limits_{k=1}^m f_k(X_{\lambda_k})\right|X_{\lambda_i}=z\right]}{\E\left[\left.\prod\limits_{k=1}^m f_k(X_{\lambda_k})\right|X_{\lambda_i}=z\right]},
$$
for every $z\in \R$, and every $i\in\{1,\cdots,m\}$, we obtain
$$
\mathbf{\Delta}_i=\E\left[K_{i}(m,X_{\lambda_i})\E[N_t|X_{\lambda_i}] \left(X_{\lambda_i}\frac{\partial q}{\partial x}(\lambda_i,tX_{\lambda_i})-\widetilde{h}_{\lambda_i}(t)\right)\right].
$$
By hypothesis i), $\widetilde{q}_{\zeta_i}:\,x\longmapsto x\dfrac{\partial q}{\partial x}(\zeta_i,tx)-\widetilde{h}_{\zeta_i}(t)$ is continuous and non-decreasing; let $\widetilde{q}_{\zeta_i}^{-1}$ denote its right-continuous inverse. Since $(X_{\lambda},\lambda\geq0)$ is SCM, the function $z\longmapsto K_{i}(m,z)$ is non-decreasing, and we deduce from (\ref{eq:HmMtpMean0}) that:
\begin{align*}
\mathbf{\Delta}_i&\geq K_{i}\left(m,\widetilde{q}_{\lambda_i}^{-1}(0)\right)\E\left[\E[N_t|X_{\lambda_i}]\left(X_{\lambda_i}\frac{\partial q}{\partial x}(\lambda_i,tX_{\lambda_i})-\widetilde{h}_{\lambda_i}(t)\right)\right]\\
&=K_{i}\left(m,\widetilde{q}_{\lambda_i}^{-1}(0)\right)\E\left[N_t\left(X_{\lambda_i}\frac{\partial q}{\partial x}(\lambda_i,tX_{\lambda_i})-\widetilde{h}_{\lambda_i}(t)\right)\right]=0.
\end{align*} 
Thus, $(N_t,t\geq0)$ is a peacock.
\item[2)]If $\mu$ has a compact support contained in a compact interval of $\R_+$, then, following the same lines as Point 2) in the proof of Theorem \ref{theo:SCMPcocNorme}, we prove that $\left(N_t^{(\mu)},t\geq0\right)$ is a peacock.
\item[3)]In the general case, we consider the sequence $(\mu_n(d\lambda):=1_{[0,n]}\mu(d\lambda),n\in\N)$. Let $\psi\in\mathbf{C}$. By Point 2) above, $\left(N_t^{(\mu_n)},t\geq0\right)$ is a peacock for every $n$. Then 
\begin{equation}\label{eq:HmMtpPcocCompact}
\forall\,0\leq s\leq t,\quad\E\left[\psi(N_s^{(\mu_n)})\right]\leq\E\left[\psi(N_t^{(\mu_n)})\right].
\end{equation}
Moreover, it follows from Theorem \ref{theo:SCMPcocNorme} that, for every $t\geq0$,
$$
\left(N_t^{(\mu_n)}=\frac{\exp\left(\displaystyle\int_0^nq(\zeta,tX_{\zeta})\mu(d\zeta)\right)}{\E\left[\exp\left(\displaystyle\int_0^nq(\zeta,tX_{\zeta})\mu(d\zeta)\right) \right]},n\geq0\right)\quad\text{is a peacock,}
$$
in other terms, the sequence $\left(\E\left[\psi(N_t^{(\mu_n)})\right],n\geq0\right)$ is non-decreasing and bounded from above by $\E\left[\psi(N_t^{(\mu)})\right]$. Therefore, letting $n$ tends to $\infty$ in (\ref{eq:HmMtpPcocCompact}), we obtain:
$$
\forall\,0\leq s\leq t,\quad\E\left[\psi(N_s^{(\mu)})\right]\leq\E\left[\psi(N_t^{(\mu)})\right]
$$
which proves that $\left(N_t^{(\mu)},t\geq0\right)$ is a peacock.
\end{proof}
We end with some examples.
\begin{exa}
Let $\mu$ be a positive Radon measure. Let $(X_{\lambda},\lambda\geq0)$ be a right-continuous process having MTP$_2$ finite-dimensional marginals, and such that, for every $\beta>0$,
\begin{equation}\label{eq:HMexeMarkovTP2hm1}
\E\left[\exp\left(\beta\sup\limits_{0\leq \zeta\leq1}X_{\lambda}\right)\right]<\infty
\end{equation}
and
\begin{equation}\label{eq:HMexeMarkovTP2hm2}
\E\left[\exp\left(\beta\inf\limits_{0\leq \lambda\leq1}X_{\zeta}\right)\right]>0.
\end{equation}
We consider the function $q:[0,1]\times\R\to\R$ defined by:
$$
\forall\,(\lambda,x)\in[0,1]\times\R,\quad q(\lambda,x)=2x+\sqrt{1+\lambda+x^2}. 
$$
The following inequalities are immediate.
\begin{equation}\label{ineq:HMexeMarkovTP2}
\forall\,(\lambda,x)\in[0,1]\times\R,\quad e^{2x}\leq e^{q(\lambda,x)}<e^{2+3x}+e^{2+x}.
\end{equation}
Then, using (\ref{eq:HMexeMarkovTP2hm1}) and (\ref{ineq:HMexeMarkovTP2}), we have:
$$
\forall\,t\geq0,\quad\E\left[\exp\left(\mu([0,1])\sup\limits_{0\leq\lambda\leq1} q(\lambda,tX_{\lambda})\right)\right]<\infty.
$$
Moreover, (\ref{eq:HMexeMarkovTP2hm1}), (\ref{eq:HMexeMarkovTP2hm2}) and (\ref{ineq:HMexeMarkovTP2}) ensure that conditions (\ref{eq:SCMF2Int0})-(\ref{eq:SCMF2Int2}) of Theorem \ref{theo:SCMPcocNorme} are fulfilled.\\
On the other hand, $x\longmapsto q (\lambda,x)$ and $x\longmapsto x\dfrac{\partial q}{\partial x}(\lambda,x)$ are non-decreasing functions.  Therefore, by Theorem \ref{theo:SCMvolatilePcocNorme}, 
$$
\left(N_t:=\frac{\exp\left(\displaystyle\int_0^1q (\lambda,tX_{\lambda})\mu(d\lambda)\right)}{\E\left[\exp\left(\displaystyle\int_0^1q (\lambda,tX_{\lambda})\mu(d\lambda)\right)\right]},t\geq0\right)\quad\text{is a peacock.}
$$
\end{exa}
\begin{exa}
Let $X:=(X_u,u\geq0)$ denote a right-continuous process which has MTP$_2$ finite-dimensional marginals. We assume that $X$ enjoys the scale property of order $\gamma>0$, i.e.
$$
\forall\,t>0,\text{ }(X_{tu},u\geq0)\=(t^{\gamma}X_u,u\geq0).
$$
Let $q:\R\to\R$ be a non-decreasing $\mathcal{C}^1$-function such that: 
\begin{enumerate}
\item[i)]the function $x\longmapsto xq'(x)$ is non-decreasing.  
\item[ii)] For every  $\beta,t>0$ and $x\in\R$:
$$
\E\left[\exp\left(\beta\sup\limits_{0\leq u\leq1}q(t^{\gamma}X_u)\right)\right]<\infty\text{ et }
\E\left[\exp\left(\beta\inf\limits_{0\leq u\leq1}q(t^{\gamma}X_u)\right)\right]>0.
$$
\item[iii)] For every $t,u>0$, there exists $\alpha=\alpha(t,u)>1$ such that
$$
\E\left[|X_u|^{\alpha}(q')^{\alpha}(t^{\gamma}X_u)\right]<\infty.
$$
\end{enumerate}
Then, after the change of variable $s=tu$, we deduce from Theorem \ref{theo:SCMvolatilePcocNorme} that
$$
\left(N_t:=\frac{\exp\left(\dfrac{1}{t}\displaystyle\int_0^tq(X_s)ds\right)}{\E\left[\exp\left(\dfrac{1}{t}\displaystyle\int_0^tq(X_s)ds\right)\right]},t\geq0\right)\text{ is a peacock.}
$$
\end{exa}

The purpose of the next example is to exihibit peacocks of type $N^2$ using processes which are not MTP$_2$. 
\begin{exa}\label{exa:SCMnotMTP2}
Let $(X_1,X_2)$ be a random vector with values in $\{1,2,3\}^2$ and which has the law  $\Pb(X_1=i,X_2=j)=P_{ij}$, where 
\[
P=\dfrac{1}{20}\left(
\begin{array}{ccc}
3&3&1\\
3&2&2\\
1&2&3
\end{array}
\right).
\] 
Observe that $P$ is a symmetric matrix which is not TP$_2$ since 
\[
\det\left(
\begin{array}{cc}
3&3\\
3&2
\end{array}
\right)<0.
\]
But, $(X_1,X_2)$ satisfy a SCM type condition (weaker than SCM). Precisely, for every componentwise non-decreasing function $\phi:\{1,2,3\}^2\to\R$ and every non-decreasing and strictly positive functions $f_1,f_2:\{1,2,3\}\to\R_+$, the maps $K_1,\,K_2:\{1,2,3\}\to\R$ defined by
\[
K_1:i\in\{1,2,3\}\longmapsto\dfrac{\E[\phi(X_1,X_2)f_1(X_1)f_2(X_2)]|X_1=i]}{\E[f_1(X_1)f_2(X_2)|X_1=i]}
\]
and
\[
K_2:i\in\{1,2,3\}\longmapsto\dfrac{\E[\phi(X_1,X_2)f_1(X_1)f_2(X_2)]|X_2=i]}{\E[f_1(X_1)f_2(X_2)|X_2=i]} 
\]
are non-decreasing.\\
Since $P$ is symmetric, it suffices to prove that $K_1$ is  non-decreasing. Observe that, for every $i=1,2,3$,
\[
K_1(i)=\dfrac{\sum\limits_{k=1}^3\phi(i,k)f_2(k)P_{ik}}{\sum\limits_{k=1}^3f_2(k)P_{ik}}.
\] 
Moreover, to show that $K_1$ is non-decreasing, we may restrict ourselves to the functions $\phi(i,\cdot)=1_{\llbracket a,+\infty\llbracket}$ $(a=2,3)$. Precisely, it is sufficient to see that $P$ satisfies 
\begin{equation}\label{eq:exaSCMnotMTP2a1}
\dfrac{1}{P_{11}} \sum\limits_{k=2}^3f_2(k)P_{1k}\leq \dfrac{1}{P_{21}}\sum\limits_{k=2}^3f_2(k)P_{2k}\leq \dfrac{1}{P_{31}}\sum\limits_{k=2}^3f_2(k)P_{3k}
\end{equation}
and  
\begin{equation}\label{eq:exaSCMnotMTP2a2}
\dfrac{P_{13}}{\sum\limits_{k=1}^2f_2(k)P_{1k}}\leq\dfrac{P_{23}}{\sum\limits_{k=1}^2f_2(k)P_{2k}}\leq\dfrac{P_{33}}{\sum\limits_{k=1}^2f_2(k)P_{3k}}. 
\end{equation}
To obtain (\ref{eq:exaSCMnotMTP2a1}) and (\ref{eq:exaSCMnotMTP2a2}), one may remark that, for every $i\in\{1,2,3\}$ and $a\in\{2,3\}$,
\[
\dfrac{\sum\limits_{k=a}^3f_2(k)P_{ik}}{\sum\limits_{k=1}^3f_2(k)P_{ik}}=\dfrac{1}{1+\dfrac{\sum\limits_{k=1}^{a-1}f_2(k)P_{ik}}{\sum\limits_{k=a}^{3}f_2(k)P_{ik}}}.
\]
Now, since 
\begin{equation}\label{eq:exaSCMnotMTP2b}
\dfrac{1}{3}[3f_2(2)+f_2(3)]\leq\dfrac{2}{3}[f_2(2)+f_2(3)]\leq 2f_2(2)+3f_2(3)
\end{equation}
and
\[
\dfrac{1}{3f_2(1)+3f_2(2)}\leq\dfrac{2}{3f_2(1)+2f_2(2)}\leq\dfrac{3}{f_2(1)+2f_2(2)},
\]
we deduce that $K_1$ is non-decreasing. Note that
the first inequality in (\ref{eq:exaSCMnotMTP2b}) holds since $f_2$ is non-decreasing.\\
Similarly, one may prove that if $(Y_1,Y_2)$ is a random vector taking values in $\{1,2,3\}^2$ and whose law is given by $\Pb(Y_1=i,Y_2=j)=P^{\ast}_{ij}$, with
\[
P^{\ast}=\frac{1}{20}\left(
\begin{array}{ccc}
3&2&1\\
2&2&3\\
1&3&3
\end{array}
\right)
\] 
then, for every componentwise non-decreasing function $\phi:\{1,2,3\}^2\to\R$ and for every non-increasing and strictly positive functions $g_1,g_2:\{1,2,3\}\to\R$,
\[
K^{\ast}_1:i\longmapsto\dfrac{\E[\phi(Y_1,Y_2)g_1(Y_1)g_2(Y_2)|Y_1=i]}{\E[g_1(Y_1)g_2(Y_2)|Y_1=i]}
\]
and
\[
K^{\ast}_2:i\longmapsto\dfrac{\E[\phi(Y_1,Y_2)g_1(Y_1)g_2(Y_2)|Y_2=i]}{\E[g_1(Y_1)g_2(Y_2)|Y_2=i]}
\]
are non-decreasing.
\begin{corol}
If $q_1,q_2:\R_+\to\R$ are two non-decreasing $\mathcal{C}^1$ class functions such that $x\longmapsto xq^{\prime}_1(x)$ and $x\longmapsto xq^{\prime}_2(x)$ are also non-decreasing, then 
\[
\left(N_t=\dfrac{\exp(q_1(tX_1)+q_2(tX_2))}{\E[\exp(q_1(tX_1)+q_2(tX_2))]},t\geq0\right) 
\]
and
\[
\left(N^{\ast}_t=\dfrac{\exp(-q_1(tY_1)-q_2(tY_2))}{\E[\exp(-q_1(tY_1)-q_2(tY_2))]},t\geq0\right)
\]
are peacocks.
\end{corol}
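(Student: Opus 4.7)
The plan is to specialize the proof of Theorem \ref{theo:SCMvolatilePcocNorme} to the two-atom discrete case $\mu=\delta_{1}+\delta_{2}$, with $q(i,x):=q_i(x)$. Although $(X_1,X_2)$ and $(Y_1,Y_2)$ are not SCM in the full sense of Definition \ref{defi:SCM} (their laws are not even TP$_2$), inspection of Theorem \ref{theo:SCMvolatilePcocNorme}, Point 1, reveals that \eqref{eq:SCM} is only ever invoked with a very specific pair $(\phi,f_1,f_2)$ whose monotonicities are tied to those of $q$. The preceding example establishes precisely the required restricted monotonicity for $(X_1,X_2)$ (for non-decreasing $\phi,f_1,f_2$) and for $(Y_1,Y_2)$ (for non-decreasing $\phi$ and non-increasing $g_1,g_2$), which will be exactly sufficient to carry the argument through. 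All integrability hypotheses of Theorem \ref{theo:SCMvolatilePcocNorme} are trivially satisfied, since $(X_1,X_2)$ and $(Y_1,Y_2)$ take only finitely many values.

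For $(N_t,t\geq 0)$, set $h(t):=\log\E[\exp(q_1(tX_1)+q_2(tX_2))]$ and $f_k(x):=\exp(q_k(tx))$, so that $N_t = e^{-h(t)}f_1(X_1)f_2(X_2)$ and the $f_k$ are strictly positive and non-decreasing on $\{1,2,3\}$. Differentiation in $t$ yields
\[
\frac{d}{dt}\E[\psi(N_t)] = \sum_{i=1}^{2} \E\left[\psi'(N_t)N_t\bigl(X_i q'_i(tX_i)-\widetilde{h}_i(t)\bigr)\right], \qquad \widetilde{h}_i(t):=\E[N_t X_i q'_i(tX_i)].
\]
Conditioning on $X_i$ rewrites the $i$-th summand as $\E[K_i(X_i)\E[N_t\mid X_i](X_i q'_i(tX_i)-\widetilde{h}_i(t))]$, where $K_i(z)=\E[\psi'(N_t)N_t\mid X_i=z]/\E[N_t\mid X_i=z]$ is the ratio from the example associated with the componentwise non-decreasing test function $\phi(x_1,x_2)=\psi'(\exp(q_1(tx_1)+q_2(tx_2)-h(t)))$ (non-decreasing because $\psi'$ is, by convexity of $\psi$) and with the $f_k$ just defined. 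The example guarantees that $K_i$ is non-decreasing. Since $x\mapsto xq'_i(tx)$ is non-decreasing by hypothesis and $\E[N_t(X_iq'_i(tX_i)-\widetilde{h}_i(t))]=0$, the sign-case argument of Theorem \ref{theo:SCMvolatilePcocNorme} applied at any $x_0$ with $x_0 q'_i(tx_0)=\widetilde{h}_i(t)$ gives each summand nonnegative, whence $(N_t)$ is a peacock.

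For $(N^{\ast}_t,t\geq 0)$, the analogous computation gives
\[
\frac{d}{dt}\E[\psi(N^{\ast}_t)] = \sum_{i=1}^{2} \E\left[\psi'(N^{\ast}_t)N^{\ast}_t\bigl(-Y_i q'_i(tY_i)+\widetilde{h}^{\ast}_i(t)\bigr)\right], \qquad \widetilde{h}^{\ast}_i(t):=\E[N^{\ast}_t Y_i q'_i(tY_i)].
\]
Now $g^{\ast}_k(y):=\exp(-q_k(ty))$ is strictly positive and non-increasing, and $\phi^{\ast}(y_1,y_2):=\psi'(\exp(-q_1(ty_1)-q_2(ty_2)-h^{\ast}(t)))$ is componentwise non-increasing. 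To reuse the example for $(Y_1,Y_2)$, write $\phi^{\ast}=C-\widetilde\phi$ with $C>\|\phi^{\ast}\|_{\infty}$, so that $\widetilde\phi$ is non-decreasing, non-negative and bounded; the ratio
\[
K^{\#}_i(z):=\frac{\E[\phi^{\ast}(Y_1,Y_2)g^{\ast}_1(Y_1)g^{\ast}_2(Y_2)\mid Y_i=z]}{\E[g^{\ast}_1(Y_1)g^{\ast}_2(Y_2)\mid Y_i=z]} = C-K^{\ast}_i(z)
\]
is then non-increasing, because the example ensures that $K^{\ast}_i$ built from $\widetilde\phi$ non-decreasing and $g^{\ast}_k$ non-increasing is non-decreasing. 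Since $r^{\ast}_i(y):=-yq'_i(ty)+\widetilde{h}^{\ast}_i(t)$ is itself non-increasing (because $x\mapsto xq'_i(x)$ is non-decreasing), and $\E[N^{\ast}_t\, r^{\ast}_i(Y_i)]=0$, a sign-case analysis at any $y_0$ with $r^{\ast}_i(y_0)=0$ shows $(K^{\#}_i(Y_i)-K^{\#}_i(y_0))r^{\ast}_i(Y_i)\geq 0$ in both regimes $Y_i\leq y_0$ and $Y_i\geq y_0$; hence each summand is nonnegative and $(N^{\ast}_t)$ is a peacock.

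The main subtlety, rather than an obstacle, lies in the $N^{\ast}_t$ case, where both $K^{\#}_i$ and $r^{\ast}_i$ become non-increasing (instead of non-decreasing as in the $N_t$ case); the product of their differences still has the right sign at the switching point $y_0$, so the core inequality is preserved. Once the directions of monotonicity are lined up correctly and one notes that the restricted SCM-type inequalities of the example match exactly the forms of $\phi$ and $f_k$ that arise when differentiating $\E[\psi(N_t)]$ and $\E[\psi(N^{\ast}_t)]$, the argument is mechanical.
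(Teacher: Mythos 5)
Your proof is correct and follows exactly the route the paper intends: the paper states the corollary as a direct consequence of the preceding example without writing out the details, and your fill-in — specializing the derivative computation and sign-switching argument from the proof of Theorem \ref{theo:SCMvolatilePcocNorme} to the two-point measure, noting that the only test functions $\phi$ and weights $f_k$ (resp.\ $g_k$) that ever arise are precisely those covered by the restricted SCM-type property established for $(X_1,X_2)$ and $(Y_1,Y_2)$, and handling the non-increasing $\phi^{\ast}$ by the affine reduction $\phi^{\ast}=C-\widetilde\phi$ — is the intended argument.
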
 
\end{exa}
\noindent
\text{}\\
\textbf{Acknowledgements: }We are grateful to Professors Francis Hirsch, Bernard Roynette and Marc Yor for fruitfull suggestions and comments. We also thank the referee for his comments and technical remarks.


\begin{thebibliography}{}

\bibitem {AN}
M. Y.~An.
\newblock Log-concave probability distributions: Theory and statistical
  testing.
\newblock {\em SSRN}, pages i--29, May 1997.

\bibitem {Be}
R. H.~Berk.
\newblock Some monotonicity properties of symmetric P\'olya densities and their exponential families.
\newblock {\em Z. Wahrscheinlichkeitstheorie und Verw. Gebiete }42: 303-307, 1978.

\bibitem {BDMY}
D.~Baker, C.~Donati-Martin, and M.~Yor.
\newblock A sequence of Albin type continuous martingales, with Brownian marginals and scaling.\newblock In {\em SÃ©minaire de ProbabilitÃ©s XLIII, 441-449, Lecture Notes in Maths., 2006,}
\newblock Springer, Berlin, 2010. 

\bibitem{Bo}
A. M.~Bogso.
\newblock {\it \'Etude de peacocks sous des hypothÃ¨ses de monotonie conditionnelle et de positivitÃ© totale}
\newblock ThÃ¨se de l'UniversitÃ© de Lorraine 2012.

\bibitem {BPR1}
A. M.~Bogso, C.~Profeta and B.~Roynette.
\newblock Some examples of peacocks in a Markovian set-up.
\newblock In{ \em SÃ©minaire de ProbabilitÃ©s, XLIV, }
\newblock Lecture notes in Math. (2046) 281-315, Springer, Berlin (2012).

\bibitem {BPR2}
A. M.~Bogso, C.~Profeta and B.~Roynette.
\newblock Peacocks obtained by normalisation, strong and very strong peacocks.
\newblock In{ \em SÃ©minaire de ProbabilitÃ©s, XLIV, }
\newblock Lecture notes in Math. (2046) 281-315, Springer, Berlin (2012).

\bibitem {BYa}
D.~Baker and M.~Yor.
\newblock A Brownian sheet martingale with the same marginals as the arithmetic average of geometric Brownian motion.
\newblock {\em Elect. J. Prob., }14(52)1532-1540, 2009.

\bibitem {BYb}
D.~Baker and M.~Yor.
\newblock On martingales with given marginals and the scaling property.
\newblock In {\em SÃ©minaire de ProbabilitÃ©s XLIII, 437-439, Lecture Notes in Maths., 2006}
\newblock Springer, Berlin, 2010. 

\bibitem {CEX}
P.~Carr, C.-O. Ewald, and Y.~Xiao.
\newblock On the qualitative effect of volatility and duration on prices of
  {A}sian options.
\newblock {\em Finance Research Letters}, 5(3):162--171, September 2008.

\bibitem {DS}
H.~Daduna and R.~Szekli.
\newblock A queueing theoretical proof of increasing property of {P}\'olya
  frequency functions.
\newblock {\em Statist. Probab. Lett.}, 26(3): 233--242, 1996.
 
\bibitem {ED}
A.~Edrei.
\newblock On the generating function of a doubly infinite, totally positive sequence.
\newblock{\em Trans. Amer. Math. Soc. }74(3): 367--383, 1953.

\bibitem {EF}
B.~Efron.
\newblock Increasing properties of {P}\'olya frequency functions.
\newblock {\em Ann. Math. Statist.}, 36:272--279, 1965.

\bibitem {EY}
M.~\'Emery and M.~Yor.
\newblock A parallel between Brownian bridges and gamma bridges.
\newblock{\em Publ. Res. Inst. Math. Sci.}40(3):669-688, 2004.

\bibitem {Fe}
W.~Feller.
\newblock Diffusions processes in genetics.
\newblock{\em Proc. Second Berkeley Symp. Math. Statist. Prob.,}
\newblock{University of California Press, Berkeley, }pp. 227-246, 1951.

\bibitem {FPY}
P. J.~Fitzsimmons, J. W.~Pitman, and M.~Yor.
\newblock Markovian bridges: construction, Palm interpretation, and splicing. 
\newblock  {\em Seminar on Stochastic Processes, 1992 (Seattle, WA, 1992), 101-134,} 
\newblock Progr. Probab., 33, Birkh\"auser Boston, Boston, MA, 1993.

\bibitem {GR}
R. D.~Gupta and D. St. P.~Richards.
\newblock Multivariate Liouville distributions.
\newblock {\em J. Multivariate Anal. }23:233-256, 1987.

\bibitem {HK}
K.~Hamza and F.~C.~Klebaner.
\newblock A family of non-Gaussian martingales with Gaussian marginals.
\newblock{\em J. Appl. Stoch. Anal., }pages Art. ID 92723, 19, 2007.
 
\bibitem {HPRY}
F.~Hirsch, C.~Profeta, B.~Roynette, and M.~Yor.
\newblock {\em Peacocks and associated martingales}.
\newblock Bocconi-Springer, vol 3,  2011.

\bibitem {HR}
F.~Hirsch and B.~Roynette.
\newblock A new proof of Kellerer Theorem
\newblock{\em ESAIM: PS, }16:48-60, 2012.

\bibitem {HRY1}
F.~Hirsch, B.~Roynette, and M.~Yor.
\newblock From an {I}t\^o type calculus for {G}aussian processes to integrals
  of log-normal processes increasing in the convex order.
\newblock {\em  J. Math. Soc. Japan, }63(3):887-917, 2011.

\bibitem {HRY2}
F.~Hirsch, B.~Roynette, and M.~Yor.
\newblock Unifying constructions of martingales associated with processes
  increasing in the convex order , via {L}\'evy and {S}ato sheets.
\newblock {\em Expositiones Mathematicae}, 4, 299-324, 2010.

\bibitem {HRY3}
F.~Hirsch, B.~Roynette, and M.~Yor.
\newblock Applying Ito's motto: ``look at the infinite dimensional picture'' by 
constructing sheets to obtain processes increasing in the convex order.
\newblock{\em Periodica Mathematica Hungarica}, 61(1-2):195-211, 2010.

\bibitem {KA}
S.~Karlin.
\newblock Total positivity, absorption probabilities and applications, 
\newblock{\em Trans. Amer. Math. Soc.}
Vol. 111, pp. 33-107, 1964.

\bibitem {KAa}
S.~Karlin.
\newblock {\em Total positivity.} 
\newblock{ Stanford University Press, 1967.}

\bibitem {KaM1}
S.~Karlin and J. L.~McGregor.
\newblock Coincidence probabilities, 
\newblock{\em Pacific J. Math.}
Vol. 9, pp. 1141-1165, 1959.

\bibitem {KaM2}
S.~Karlin and J. L.~McGregor.
\newblock Classical diffusion processes and total positivity, 
\newblock{\em J. Math. Anal. Appl.}
Vol. 1, pp. 163-183, 1960.

\bibitem {KaT}
S.~Karlin and H. M.~Taylor.
\newblock A second course in Stochastic processes, 
\newblock{ New York: Academic Press,}
1981.

\bibitem {Kel}
H. G.~Kellerer.
\newblock Markov-{K}omposition und eine {A}nwendung auf {M}artingale.
\newblock {\em Math. Ann.}, 198:99--122, 1972.

\bibitem {KR}
S.~Karlin and Y.~Rinot.
\newblock Classes of orderings of measures and related correlation inequalities. I. Multivariate totally positive distributions.
\newblock {\em J. Multivariate Anal. }10(4):467-498, 1980.
 
\bibitem {MS}
A.~Muller and M.~Scarsini.
\newblock Stochastic comparison of random vectors with a common copula.
\newblock {\em Mathematics of Operations Research, }26(4):723-740, 2001. 
 
\bibitem {Pa}
G.~Pag\`es.
\newblock Functional co-monotony of processes with an application to peacocks and barrier options.
\newblock In {\em S\'eminaire de Probabilit\'es XLV, Lecture Notes in Maths.}
\newblock Springer. To appear 2013.

\bibitem {RS}
M.~Rothschild and J. E.~Stiglitz.
\newblock Increasing risk. I. A definition.
\newblock {\em J. Econom. Theory, }2:225-243, 1970.
 
\bibitem {ReY}
D.~Revuz and M.~Yor.
\newblock {\em Continuous martingales and {B}rownian motion}, volume 293 of
  {\em Grundlehren der Mathematischen Wissenschaften [Fundamental Principles of
  Mathematical Sciences]}.
\newblock Springer-Verlag, Berlin, third edition, 1999.


\bibitem {Sar}
T. K.~Sarkar.
\newblock {\em Some lower bounds of reliability.}
\newblock Technical Report, No 124, Dept. of Operations Research and Statistics, 
\newblock Stanford University, 1969.
 
\bibitem {Sch}
I.J.~Schoenberg.
\newblock{ On P\'olya frequency functions I. The totally positive functions 
and their Laplace transforms.}
\newblock{\em J. Analyse Math.} 1, 331-374, 1951.

\bibitem {ShS1} 
M.~Shaked and J.G. Shanthikumar.
\newblock {\em Stochastic orders and their applications}.
\newblock Probability and Mathematical Statistics. Academic Press, Boston,
  1994.

\end{thebibliography}
\end{document}